\theoremstyle{definition}
\newtheorem{definition}{Definition}[section]
\theoremstyle{plain}
\newtheorem{lemma}[definition]{Lemma}
\newtheorem{theorem}[definition]{Theorem}
\newtheorem{proposition}[definition]{Proposition}
\newtheorem{corollary}[definition]{Corollary}
\theoremstyle{remark}
\newtheorem{remark}[definition]{Remark}
\newtheorem{example}[definition]{Example}
\newcommand{\mycl}{\operatorname{cl}}
\newcommand{\myint}{\operatorname{int}}
\newcommand{\mylns}{\operatorname{lns}}
\begin{document}
\title[Tameness of locally o-minimal structures]{Tameness of definably complete locally o-minimal structures and definable bounded multiplication}
\author[M. Fujita]{Masato Fujita}
\address{Department of Liberal Arts,
Japan Coast Guard Academy,
5-1 Wakaba-cho, Kure, Hiroshima 737-8512, Japan}
\email{fujita.masato.p34@kyoto-u.jp}

\author[T. Kawakami]{Tomohiro Kawakami}
\address{Department of Mathematics,
Wakayama University,
Wakayama, 640-8510, Japan}
\email{kawa0726@gmail.com}

\author[W. Komine]{Wataru Komine}
\address{Institute of Mathematics, University of Tsukuba, Ibaraki, 305-8571, Japan}
\email{12tarous@gmail.com}
\begin{abstract}
We first show that the projection image of a discrete definable set is again discrete for an arbitrary definably complete locally o-minimal structure. 
This fact together with the results in a previous paper implies tame dimension theory and decomposition theorem into good-shaped definable subsets called quasi-special submanifolds.

Using this fact, in the latter part of this paper, we investigate definably complete locally o-minimal expansions of ordered groups when the restriction of multiplication to an arbitrary  bounded open box is definable.
Similarly to o-minimal expansions of ordered fields, {\L}ojasiewicz's inequality, Tietze extension theorem and affiness of psudo-definable spaces hold true for such structures under the extra assumption that the domains of definition and the psudo-definable spaces are definably compact.
Here, a pseudo-definable space is a topological spaces having finite definable atlases.
We also demonstrate Michael's selection theorem for definable set-valued functions with definably compact domains of definition. 
\end{abstract}

\subjclass[2020]{Primary 03C64}

\keywords{locally o-minimal structures; definable bounded multiplication}

\maketitle

\section{Introduction}\label{sec:intro}
Locally o-minimal structures are studied in a series of papers \cite{TV, KTTT, F, Fuji, Fuji3, Fuji4}. 
We first recall the definitions of local o-minimality and definably completeness.
\begin{definition}[\cite{TV}]
A densely linearly ordered structure without endpoints $\mathcal M=(M,<,\ldots)$ is \textit{locally o-minimal} if, for every definable subset $X$ of $M$ and for every point $a\in M$, there exists an open interval $I$ containing the point $a$ such that $X \cap I$ is  a finite union of points and open intervals.
\end{definition}

\begin{definition}[\cite{M}]
An expansion of a densely linearly ordered set without endpoints $\mathcal M=(M,<,\ldots)$ is \textit{definably complete} if any definable subset $X$ of $M$ has the supremum and  infimum in $M \cup \{\pm \infty\}$.
\end{definition}
In \cite{Fuji4}, the first author demonstrated that definably complete locally o-minimal structures satisfying the following property (a) have tame dimension theory and decomposition theorem into good-shaped definable subsets called quasi-special submanifolds.
\begin{enumerate}
\item[(a)] The image of a definable discrete set under a coordinate projection is again discrete.
\end{enumerate}
The first contribution of this paper is that an arbitrary definably complete locally o-minimal structure always enjoys the property (a).
Note that it is not necessarily true for a locally o-minimal structure which is not definably complete.
A counter example is found in \cite[Example 12]{KTTT}.
In Section \ref{sec:property_a}, we demonstrate the property (a)  and recall its consequences given in \cite{Fuji4}.

Using this fact, in the latter part of this paper, we investigate definably complete locally o-minimal expansions of ordered groups when the restriction of multiplication to an arbitrary  bounded open box is definable.
We first explain the reason why we investigate them.

An o-minimal structure has tame topology \cite{vdD} and it enables us to deduce many geometric assertions for definable sets.
When, in addition, it is an expansion of an ordered field, we can get more beautiful assertions such as definable triangulation theorem and definable trivialization theorem.
Our next targets are definably complete locally o-minimal expansions of ordered fields if we employ the same strategy as the study of o-minimal structures.
However, we do not treat them for the following two reasons:

Firstly, studies in this direction have been already done. 
Fornasiero got fruitful results on definably complete local o-minimal expansions of ordered fields in \cite{F}.
Structures called models of DCTC, which are more general than definably complete local o-minimal expansions of ordered fields, are already known to possess the property (a) by \cite[Theorem 4.1]{S}.

The second reason is more crucial.
Many natural definably complete locally o-minimal expansions of ordered fields are inevitably o-minimal.
The most important example is a locally o-minimal expansion of the ordered set of reals $\mathbb R$.
It is trivially o-minimal when it is an expansion of the ordered field of reals.
The first author investigated definably complete locally o-minimal structures admitting local definable cell decomposition in \cite{Fuji}.
They are also o-minimal by \cite[Proposition 2.1]{Fuji} if they are expansions of ordered fields. 
The assumption that the structure is an expansion of an ordered field seems to be too restrictive in the study of definably complete locally o-minimal structures.
 
Therefore, we abandon full definability of multiplication. 
Instead, we assume that the multiplication is partially definable.
This is the reason why we employ the following definition:

\begin{definition}
An expansion $\mathcal M=(M,<,0,+,\ldots)$ of a densely linearly ordered group has \textit{definable bounded multiplication compatible to $+$} if there exist an element $1 \in M$ and a map $\cdot:M \times M \rightarrow M$ such that
\begin{enumerate}
\item[(1)] the tuple $(M,<,0,1,+,\cdot)$ is an ordered field;
\item[(2)] for any bounded open interval $I$, the restriction $\cdot|_{I \times I}$ of the product $\cdot$ to $I \times I$ is definable in $\mathcal M$.
\end{enumerate}
We simply say that $\mathcal M$ has \textit{definable bounded multiplication} when the addition in consideration is clear from the context. 
\end{definition}

The above definition may seem to be artificial.
However, we naturally encounter a definably complete locally o-minimal expansion of an ordered group having bounded multiplication.
We construct an example in Section \ref{sec:basic}.
The basic properties of structures having definable bounded multiplication are also discussed in the same section.

We are now ready to introduce the contexts of Section \ref{sec:functional} and Section \ref{sec:definable_space}.
We conjecture that many assertions which hold true for o-minimal expansions of ordered fields also hold true for definably complete locally o-minimal expansions of ordered groups having definable bounded multiplication under the extra assumption that the relevant spaces are definably compact.
Note that a subset of $M^n$ which is definable in a definably complete expansion of a dense linear order without endpoints is definably compact if and only it it is closed and bounded, where $M$ is the universe of the given structure.
Section \ref{sec:functional} and Section \ref{sec:definable_space} collect affirmative answers to the above conjecture.

{\L}ojasiewicz's inequality and Tietze extension theorem for o-minimal expansions ordered field are familiar to real algebraic geometers and researchers of o-minimal structures  \cite{vdD, vdDM}.
In Section \ref{sec:functional}, we demonstrate that they hold true for definably complete expansions of ordered groups having definable bounded multiplication when the domains of definitions are definably compact.
We also demonstrate Michael's selection theorem in the same situation in the same section.
The strategy for its proof is similar to that for the proof of of \cite{Th}.

We prove that a definably compact pseudo-definable space is definably imbedded into a Euclidean space in Section \ref{sec:definable_space}.
We first define psudo-definable spaces similarly to van den Dries's definable  space \cite[Chapter 10]{vdD} prior to the proof.

In the last of this section, we summarize the notations and terms used in this paper.
The term `definable' means `definable in the given structure with parameters' in this paper.
For a linearly ordered structure $\mathcal M=(M,<,\ldots)$, an open interval is a definable set of the form $\{x \in R\;|\; a < x < b\}$ for some $a,b \in M$.
It is denoted by $(a,b)$ in this paper.
We define a closed interval similarly. 
It is denoted by $[a,b]$.
An open box in $M^n$ is the direct product of $n$ open intervals.
The notation $M_{>r}$ denotes the set $\{x \in M\;|\;x>r\}$ for any $r \in M$.
We set $|x|:=\max_{1 \leq i \leq n}|x_i|$ for any vector $x = (x_1, \ldots, x_n) \in M^n$.
The function $|x-y|$ defines a distance in $M^n$ when $\mathcal M$ is an expansion of an ordered group.
Let $A$ be a subset of a topological space.
The notations $\myint(A)$, $\overline{A}$ and $\partial A$ denote the interior, the closure and the frontier of the set $A$, respectively.

\section{Property ($\mathrm{a}$) and its consequences}\label{sec:property_a}
The purpose of this section is to demonstrate that the property (a) holds true for definably complete locally o-minimal structure and to introduce its consequences.
We first begin with the proof of the property (a).

\subsection{Proof of property (a)}

Let $\mathcal{M}=(M,<,\ldots)$ be a definably complete locally o-minimal structure throughout this subsection.
We get the following lemma.

\begin{lemma}\label{lem:conti_part}
Let $f:I \to M$ be a strictly monotone definable function on an interval $I$.
There exists a subinterval $J \subset I$ such that the restriction of $f$ to $J$ is continuous.
\end{lemma}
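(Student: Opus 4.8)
The plan is to argue by contradiction: assuming that $f$ is continuous on no subinterval of $I$, I will manufacture a definable subset of $M$ that is simultaneously order‑isomorphic to an open interval — hence densely ordered — and a closed discrete subset of $M$ — hence a discrete linear order — which is absurd. After replacing $I$ by $\myint(I)$ we may assume $I$ is an open interval, and we may assume $f$ is strictly increasing, the decreasing case being symmetric (replace suprema by infima throughout).

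First I would record two routine consequences of definable completeness and local o‑minimality. (i) For every $a\in I$ the one‑sided limits $f(a^-):=\sup\{f(x):x\in I,\ x<a\}$ and $f(a^+):=\inf\{f(x):x\in I,\ x>a\}$ exist in $M$ (each relevant set is definable, nonempty and bounded by $f(a)$), and $f$ is continuous at $a$ if and only if $f(a^-)=f(a)=f(a^+)$; in particular the discontinuity set $D$ of $f$ is definable. (ii) A definable subset of an open interval that is dense in it must contain an open subinterval, since by local o‑minimality it is, near any of its points, a finite union of points and open intervals, and a finite set is never dense in an open interval. Our hypothesis says exactly that $D$ is dense in $I$, so by (ii) it contains an open interval; writing $D$ on that interval as the union of the two definable sets on which $f$ has a left jump ($f(a^-)<f(a)$), respectively a right jump ($f(a)<f(a^+)$), and invoking (ii) once more, we may pass to an open interval $J=(p,q)\subseteq I$ at every point of which $f$ has a left jump (the "right jump" case being symmetric).

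Now set
\[
V:=\bigcup_{a\in J}\bigl(f(a^-),f(a)\bigr)=\{\,y\in M:\exists a\in J,\ f(a^-)<y<f(a)\,\},
\]
a definable open subset of $M$. Since $f$ is increasing it attains no value in any interval $\bigl(f(a^-),f(a)\bigr)$, so $V\cap f(I)=\emptyset$; on the other hand each $f(a)$ with $a\in J$ lies in the closure of the corresponding interval, so $f(J)\subseteq\overline V$ and hence $f(J)\subseteq\partial V$. The frontier $\partial V$ is definable and contains no open interval — any point of such an interval would lie in $\overline V$ yet have a neighborhood disjoint from $V$ — so by local o‑minimality $\partial V$ is locally finite, hence closed and discrete. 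Therefore $X:=f(J)$ is a definable, locally finite, closed subset of $M$. For such a set, definable completeness forces every non‑maximal $y\in X$ to possess an immediate successor in $X$: the definable set $X\cap(y,+\infty)$ is nonempty and bounded below, its infimum lies in $M$, exceeds $y$ by local finiteness, and belongs to $X$ because $X$ is closed and has no accumulation point, so it equals $\min\bigl(X\cap(y,+\infty)\bigr)$. But $f$ restricts to an order isomorphism of the densely ordered interval $J$ onto $X$, and $J$ has no maximal element and no element with an immediate successor; this contradiction completes the argument.

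I expect the main obstacle to be the passage through the "jumped‑over" region $V$ and its frontier: one must see that $V$ is definable, that $\partial V$ cannot contain an interval, and — the genuinely delicate point — that a definable closed discrete subset of $M$ is forced to be a discrete linear order. It is precisely this last step that uses definable completeness in an essential way, consistent with the fact that property (a), and hence this lemma, can fail for locally o‑minimal structures that are not definably complete, cf.\ \cite[Example 12]{KTTT}.
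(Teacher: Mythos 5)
Your argument is correct, but it takes a genuinely different route from the paper's. The paper argues positively and globally on the image: after shrinking $I$ so that $f(I)$ is bounded, it invokes the dichotomy (Lemma 2.3 of the cited paper of the first author) that a definable subset of $M$ either contains an interval or is discrete and closed; $f(I)$ cannot be discrete, because a bounded discrete closed definable set has a maximum by definable completeness, which is impossible for the strictly increasing image of an interval; hence $f(I)$ contains an interval $(u,v)$, and the restriction of $f$ to a preimage interval $(c,d)$ is a strictly monotone bijection onto $(u,v)$, which is automatically continuous. You instead argue by contradiction on the discontinuity set: the jumps sweep out a definable open set $V$ whose frontier is discrete and closed, so $f$ would carry a densely ordered interval order-isomorphically onto a set in which definable completeness forces immediate successors. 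Both proofs rest on the same two ingredients --- the local-o-minimality dichotomy (somewhere dense versus discrete and closed) and definable completeness used to extract an extremal element of a discrete closed set --- but the paper applies them once to $f(I)$ and gets continuity for free from surjectivity onto an interval, whereas you apply them to the frontier of the gap set and read off the contradiction from the order type of $f(J)$. The paper's proof is shorter; yours is self-contained (you rederive the dichotomy from the definition of local o-minimality rather than citing it) and pinpoints more explicitly where definable completeness enters. One small imprecision: when you pass from the interval $K\subseteq D$ to an interval of pure left jumps, your fact (ii) applies only to a set that is dense in an interval, so you need the one-line case split --- either the left-jump set is dense in $K$ and (ii) applies to it, or some subinterval of $K$ misses it and is therefore contained in the right-jump set. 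This is harmless but should be said.
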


\begin{proof}
We only show the assertion when $f$ is strictly increasing on $I$.
The proof is similar when $f$ is strictly decreasing.
We may assume that the image $f(I)$ is bounded.
In fact, take $x<y$ in $I$. The restriction of $f$ to the interval $(x,y)$ has a bounded image contained in $(f(x),f(y))$ because $f$ is strictly increasing. 
Suppose that $f(I)$ is discrete.
Since $f(I)$ is a bounded discrete closed set by \cite[Lemma 2.3]{Fuji4}, $f(I)$ has the maximal element $h=f(x)$ where $x \in I$ by the definable completeness of $\mathcal M$.
Take $y \in I$ with $x<y$.
Since $f$ is strictly increasing on $I$, we have $h=f(x)<f(y) \in f(I)$.
It is a contradiction.
So $f(I)$ contains an interval $(u,v)$ by \cite[Lemma 2.3]{Fuji4}.
Take $c \in f^{-1}(u)$ and $d \in f^{-1}(v)$.
Since the restriction $f|_{(c,d)}:(c,d) \to (u,v)$ of $f$ to the open interval $(c,d)$ is bijective, it is continuous.
\end{proof}

We recall the definition of local monotonicity.

\begin{definition}[Local monotonicity]
A function $f$ defined on an open interval $I$ is \textit{locally constant} if, for any $x \in I$, there exists an open interval $J$ such that $x \in J \subset I$ and the restriction $f|_J$ of $f$ to $J$ is constant.

A function $f$ defined on an open interval $I$ is \textit{locally strictly increasing} if, for any $x \in I$, there exists an open interval $J$ such that $x \in J \subset I$ and $f$ is strictly increasing on the interval $J$.
We define a \textit{locally strictly decreasing} function similarly. 
A \textit{locally strictly monotone} function is a locally strictly increasing function or a locally strictly decreasing function.
A \textit{locally monotone} function is locally strictly monotone or locally constant.
\end{definition}

\begin{theorem}[Strong monotonicity]\label{thm:monotonicity}
Consider a definably complete locally o-minimal structure $\mathcal{M}=(M,<,\ldots)$.
Let  $I$ be an interval and $f:I \to M$ be a definable function.
Then, there exists a mutually disjoint definable partition $I=X_d \cup X_c \cup X_+ \cup X_-$ satisfying the following conditions:
\begin{enumerate}
\item[(1)] the definable set $X_d$ is discrete and closed;
\item[(2)] the definable set $X_c$ is open and $f$ is locally constant on $X_c$;
\item[(3)] the definable set $X_+$ is open and $f$ is locally strictly increasing and continuous on $X_+$;
\item[(4)] the definable set $X_-$ is open and $f$ is locally strictly decreasing and continuous on $X_-$.
\end{enumerate}
\end{theorem}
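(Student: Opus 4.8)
The plan is to define the three ``good'' pieces by the obvious local conditions on $f$ and then to show that the set left over is forced to be discrete. First I would put $X_c:=\{x\in I:f\text{ is constant on some open subinterval }J\text{ of }I\text{ with }x\in J\}$, $X_+:=\{x\in I:f\text{ is continuous and strictly increasing on some open subinterval }J\text{ of }I\text{ with }x\in J\}$, and define $X_-$ symmetrically with ``strictly decreasing''. Each defining condition is first-order in the ordered language once a parameter naming the graph of $f$ is fixed, so these sets are definable; they are open and satisfy (2)--(4) by construction, and they are pairwise disjoint because on the intersection of two witnessing subintervals $f$ would be, say, both constant and strictly increasing. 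Put $X_d:=I\setminus(X_c\cup X_+\cup X_-)$. By \cite[Lemma 2.3]{Fuji4} a definable subset of $M$ with empty interior is discrete and closed, so to obtain (1), and hence the theorem, it suffices to show that $X_d$ contains no open interval.

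Suppose to the contrary that $(u,v)\subseteq X_d$. For $c\in(u,v)$ I would apply local o-minimality to the definable sets $\{x:f(x)>f(c)\}$, $\{x:f(x)<f(c)\}$ and $\{x:f(x)=f(c)\}$. Since $c\notin X_c$, the third set meets a small enough interval $(c-\varepsilon,c+\varepsilon)$ only in the point $c$; then $(c-\varepsilon,c)$ is the disjoint union of two finite unions of points and open intervals, so after shrinking $\varepsilon$ exactly one of $f>f(c)$, $f<f(c)$ holds throughout $(c-\varepsilon,c)$, and likewise throughout $(c,c+\varepsilon)$. Recording the two signs partitions $(u,v)$ into four definable sets: $A_{\nearrow}$ and $A_{\searrow}$ (the increasing, resp. decreasing, local profile) and $A_{\min}$, $A_{\max}$ (the strict local minima, resp. maxima, of $f$). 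Since $(u,v)$ is not discrete, one of these four sets contains an open interval.

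If $A_{\nearrow}$ contains an open interval $(u',v')$, then $f$ is strictly increasing on $(u',v')$: for $c\in(u',v')$ the set $\{t\in(c,v'):f(t)\le f(c)\}$ must be empty, for otherwise its infimum $t_0$ lies in $(c,v')$, one has $f>f(c)$ on $(c,t_0)$, and the local profile of $f$ at $t_0\in A_{\nearrow}$ forces both $f(t_0)>f(c)$ and $f>f(t_0)$ immediately to the right of $t_0$, contradicting the choice of $t_0$. Hence $f$ is strictly increasing on $(u',v')$, so by Lemma \ref{lem:conti_part} it is continuous on some further subinterval, which therefore lies in $X_+$ — contradicting $(u',v')\subseteq(u,v)\subseteq X_d$. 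The case of $A_{\searrow}$ is symmetric, using $X_-$ in place of $X_+$.

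The remaining case, that $A_{\min}$ (or, symmetrically, $A_{\max}$) contains an open interval, is the step I expect to be the main obstacle. Assume $(u',v')\subseteq A_{\min}$, so every point of $(u',v')$ is a strict local minimum of $f$, and fix a closed subinterval $[a,b]\subseteq(u',v')$. Using definable completeness one first shows that $f$ is bounded below on $[a,b]$ and has a minimum value $m$ there: if $f$ were unbounded below, then letting $\xi^{*}$ be the supremum over $r\in M$ of $\inf\{x\in[a,b]:f(x)<r\}$ one checks that $\xi^{*}\in(u',v')$ and that $f$ is unbounded below on every neighbourhood of $\xi^{*}$, contradicting the strict local minimum at $\xi^{*}$; a parallel argument gives attainment. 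The minimizer set $Z:=\{x\in[a,b]:f(x)=m\}$ is definable and bounded, and contains no interval (otherwise $f$ would be locally constant somewhere in $(u',v')$), so by \cite[Lemma 2.3]{Fuji4} it is discrete and closed, and being bounded it is finite. On the closure of a component of $[a,b]\setminus Z$ the function $f$ equals $m$ at the adjacent points of $Z$ (or at $a$ or $b$) and exceeds $m$ in the interior, and analysing the supremum of $f$ on such a closure via definable completeness — with a separate, more delicate treatment of the possibility that $f$ is unbounded above there — yields a point of $(u',v')$ at which $f$ has a local maximum, contradicting that every point is a strict local minimum. This contradiction shows $X_d$ has empty interior and completes the proof.
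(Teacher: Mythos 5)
Your construction of $X_c$, $X_+$, $X_-$ and $X_d$, the reduction to showing that $X_d$ contains no interval, the classification of points of a putative interval $(u,v)\subseteq X_d$ by their left/right sign profiles, and the treatment of the cases $A_{\nearrow}$ and $A_{\searrow}$ via Lemma \ref{lem:conti_part} all coincide with the paper's proof and are correct. The genuine gap is in the case of an interval consisting entirely of strict local minima (or maxima), which is exactly where the paper concentrates its effort (Case 2 of its proof, the profile $\Phi_{+,+}$). Your infimum argument does close: at the accumulation point $\xi^{*}$ produced by definable completeness, the strict-local-minimum hypothesis forces $f(\xi^{*})$ to equal the local infimum, so the minimum of $f$ on $[a,b]$ is attained. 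But the dual step that you defer to ``a separate, more delicate treatment'' --- producing an interior local maximum from the supremum of $f$ on a subinterval --- does not follow by the same device, and it is the crux of the whole theorem. A discontinuous definable function on a closed bounded interval need not attain its supremum: definable completeness only places the supremum in $M\cup\{+\infty\}$, and the max--min principle of \cite{M} requires continuity, which is precisely what is unavailable on $X_d$. If the supremum $s$ is not attained, the accumulation-point construction yields an $\eta^{*}$ in every neighbourhood of which $f$ takes values approaching $s$ while $f(\eta^{*})<s$; since being a strict local minimum constrains only the local \emph{infimum} of $f$ at $\eta^{*}$ and says nothing about its local supremum, no contradiction results. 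The classical fact that an interval of reals cannot consist entirely of strict local minima is proved by a countable pigeonhole over rational intervals, which has no analogue here; the paper's substitute is the entire apparatus of the set $B$ and its maximum $\gamma$, the function $y(c)=\inf\{z\in[c,\beta)\mid f<f(c)\text{ on }(z,\beta)\}$, and the formulas $\Psi_{+,-}$ and $\Psi_{-,+}$. That substitute is the content missing from your sketch.

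A secondary slip: from ``$Z$ is discrete, closed and bounded'' you infer that $Z$ is finite. This implication is not available in this generality (it holds over $\mathbb R$ by compactness, but a definably complete locally o-minimal structure may have infinite bounded discrete closed definable sets); note that the paper itself only ever extracts a maximum or a minimum from such sets via definable completeness, never finiteness. This particular slip is harmless --- discreteness and closedness already give you a maximal open subinterval of $[a,b]\setminus Z$ with endpoints in $Z\cup\{a,b\}$ --- but the gap described above is not.
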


\begin{proof}
We first define the definable sets $X_c, X_+, X_-$ by
\begin{align*}
X_c=\{x \in M \mid\ &f\text{ is constant on some subinterval of }I\text{ containing }x\},\\
X_+=\{x \in M \mid\ &f\text{ is strictly increasing and continuous }\\
&\text{ on some subinterval of }I\text{ containing }x\} \text{ and }\\
X_-=\{x \in M \mid\ &f\text{ is strictly decreasing and continuous }\\
&\text{ on some subinterval of }I\text{ containing }x\}.
\end{align*}
Set $X_d=I \setminus (X_c \cup X_+ \cup X_-)$.
It is obvious that $X_c,X_+$ and $X_-$ are open and definable.
The remaining task is to show that $X_d$ is discrete and closed.
Suppose not, then $X_d$ contains an interval $J$ by \cite[Lemma 2.3]{Fuji4}.
We may assume that $J$ is bounded without loss of generality.

Fix $a \in J$.
By the local o-miniamlity of $\mathcal{M}$, there exists an open interval $(a,r)$ such that one of followings holds: $f>f(a)$ on $(a,r)$, $f=f(a)$ on $(a,r)$ or $f<f(a)$ on $(a,r)$.
By the definition of $X_d$, we have $f>f(a)$ on $(a,r)$ or $f<f(a)$ on $(a,r)$.
Similar situation holds on the left side of the point $a$.
We define the formula $\Phi_{-,+}(v)$ by 
\begin{eqnarray*}
\exists v_1 \exists v_2 [v_1<v<v_2&\land&\forall w_1 (v_1<w_1<v \to f(w_1)<f(v))\\
&\land&\forall w_2 (v<w_2<v_2 \to f(w_2)>f(v))].
\end{eqnarray*}
The formulas $\Phi_{+,-}(v), \Phi_{+,+}(v), \Phi_{-,-}(v)$ are defined similarly.
We have shown that each $a \in J$ satisfies exactly one of $\Phi_{-,+}(v), \Phi_{+,-}(v), \Phi_{+,+}(v)$ and $\Phi_{-,-}(v)$.
By local o-minimality of $\mathcal{M}$, on some subinterval $J'=(\alpha,\beta)$, one of them holds constantly.

We consider the cases in which $\Phi_{-,+}(x), \Phi_{+,-}(x), \Phi_{+,+}(x)$ and $\Phi_{-,-}(x)$ hold true for all $x \in J'$, separately.
We first consider the following case.
\medskip

\textbf{Case 1.} When we have $\mathcal M \models \Phi_{-,+}(x)$ for all $x \in J'$.
\medskip

For each $x \in J'$, we define $s(x)$ by
\[
s(x)=\sup \{s \in (x,\beta) \mid f(x)<f \mbox{ on } (x,s]\}.
\]
If $s(x)<\beta$, there exists $y \in (s(x),\beta)$ such that $f(x)<f(s(x))<f$ on $(s(x),y)$ because $\mathcal M \models \Phi_{-,+}(s(x))$.
It is a contradiction to the definition of $s(x)$.
So the equality $s(x)=\beta$ holds.
It means that $f$ is strictly increasing on $J'$.
By Lemma \ref{lem:conti_part}, the definable function $f$ is continuous on some subinterval of $J'$.
It contradicts the definition of $X_d$.

When $\mathcal M \models \Phi_{+,-}(x)$ for all $x \in J'$, we can similarly show that $f$ is continuous and strictly decreasing on some subinterval of $J'$.
It is also a contradiction.

The next target is the following case:
\medskip

\textbf{Case 2.} When we have $\mathcal M \models \Phi_{+,+}(x)$ for all $x \in J'$.
\medskip

We define the definable set $B \subset J'$ by
\[
B=\{x \in J' \mid \forall y \in J' (x<y \to f(x)<f(y))\}.
\]
Now the set $B$ must be discrete, closed and bounded.
Otherwise, it contains an interval by \cite[Lemma 2.3]{Fuji4} and $f$ is strictly increasing and continuous on some subinterval by Lemma \ref{lem:conti_part}.
It is a contradiction to the definition of $X_d$.
Since $B$ is discrete, closed and bounded, there exists $\gamma=\max B$.
We set $J_0=(\gamma,\beta) \subset  J'\setminus B$.

Fix $c \in J_0$.
Now we show that there exists $c_0 \in J_0$ such that $f<f(c)$ on $(c_0,\beta)$.
Suppose not and we set $B'=\{y \in J_0 \mid f(c)<f(y)\}$.
By the assumption and local o-minimality of $\mathcal{M}$, there exists $z_0 \in J_0$ such that $(z_0,\beta) \subset B'$.
Set $d=\inf\{z \in [c,\beta) \mid f(c) \leq f \mbox{ on } (z,\beta)\}$.
\begin{itemize}
  \item If $f(d) \geq f(c)$, we have $f(c) < f$ on $(e,\beta)$ for some $e \in (c,d)$ because $\mathcal M \models \Phi_{+,+}(d)$. It is a contradiction to the definition of $d$.
  \item We consider the case in which $f(d) < f(c)$.
 By the definition of $\gamma$, we have $d \notin B$.
 It implies that $f(d) \geq f(e')$ for some $e' \in (d,\beta)$.
 We get $f(e')<f(c)$.
 On the other hand, we obtain $f(c) \leq f(e')$ by the definition of $d$.
 It is a contradiction.
\end{itemize}
We have demonstrated the existence of $c_0$.
So we can define $$y(c)=\inf\{z \in [c,\beta) \mid f<f(c) \text{ on } (z,\beta)\}.$$
Suppose that $c=y(c)$, then there exists $c_1 \in (c,\beta)$ such that $f(c)<f$ on $(c,c_1)$ because $\mathcal M \models \Phi_{+,+}(c)$.
Take $z \in (c,c_1)$, then we have $f(c)<f(z)$.
But, by definition of $y(c)$, we have $f(z)<f(c)$. 
A contradiction.
The inequality $c<y(c)$ holds.

We consider the formula $\Psi_{+,-}(v)$ given by 
\[
\exists v_1 \exists v_2 [\gamma<v_1<v<v_2<\beta \land
\forall z_1 \forall z_2 (v_1<z_1<v<z_2<v_2 \to f(z_1)>f(z_2))].
\]

Now we check that the sentence $\Psi_{+,-}(y(c))$ holds in $\mathcal M$.
Since we have $\mathcal M \models \Phi_{+,+}(y(c))$ by the assumption, we get $f(y_1)>f(y(c))$ for some $y_1 \in (y(c),\beta)$.
Then, there exists $z \in (y(c),y_1)$ such that $f<f(c)$ on $(z,\beta)$ by the definition of $y(c)$.
Since we have $z<y_1$, we obtain $f(y_1)<f(c)$.
They imply the inequality 
\begin{equation}
f(y(c))<f(c). \label{eq:komine1}
\end{equation}
Since $\mathcal{M}$ is locally o-minimal, there exists $v_1 \in (c,y(c))$ such that either $f<f(c)$ or $f \geq f(c)$ on $(v_1,y(c))$.
Suppose that $f<f(c)$ on $(v_1,y(c))$.
The inequality (\ref{eq:komine1}) and the definition of $y(c)$ yield that $f<f(c)$ on $(v_1,\beta)$.
It is a contradiction to the definition of $y(c)$.
So, the inequality $f \geq f(c)$ holds on the interval $(v_1,y(c))$.
On the other hand, for all $z_2 \in (y(c),\beta)$, we get $f(z_2)<f(c)$ by the definition of $y(c)$.
Summarizing the above results, the inequalities $f(z_1) \geq f(c)>f(z_2)$ hold for all $z_1 \in (v_1,y(c))$ and all $z_2 \in (y(c),\beta)$.
It means that $\mathcal M \models \Psi_{+,-}(y(c))$.

So, for all $c \in (\gamma,\beta)$, there exists $y(c) \in (c,\beta)$ such that $\Psi_{+,-}(y(c))$ holds.
Consider the definable set $T=\{x \in (\gamma,\beta)\;|\; \mathcal M \models \Psi_{+,-}(x)\}$.
Since $\mathcal{M}$ is locally o-minimal, there exists  $\delta \in (\gamma,\beta)$ such that the open interval $(\delta,\beta)$ is contained in $T$ or has an empty intersection with $T$.
However, the existence of $y(c)$ implies that $(\delta,\beta)$ is contained in $T$.
We define the formula $\Psi_{-,+}$ similarly to $\Psi_{+,-}$.
We can construct an open interval each of whose elements satisfies both the formulas $\Psi_{+,-}$ and $\Psi_{-,+}$.
It is a contradiction.

We can lead to a contradiction similarly when $\mathcal M \models \Phi_{-,-}(x)$ for all $x \in J'$.
We omit the proof.
We have finally demonstrated that $X_d$ is discrete and closed.
\end{proof}
In \cite{Tsuboi}, A. Tsuboi gave a proof of Theorem 2.2 slightly different from ours.   

We get the following proposition called properties (b) and (c) in \cite{Fuji4}.
\begin{proposition}\label{prop:property_bc}
  The following assertions hold true.
  \begin{enumerate}
  \item[(1)] Let $X_1$ and $X_2$ be definable subsets of $M^m$.
  Set $X=X_1 \cup X_2$.
  Assume that $X$ has a nonempty interior.
  At least one of $X_1$ and $X_2$ has a nonempty interior.
  \item[(2)] Let $A$ be a definable subset of $M^m$ with a nonempty interior and $f:A \rightarrow M^n$ be a definable map.
  There exists a definable open subset $U$ of $M^m$ contained in $A$ such that the restriction of $f$ to $U$ is continuous.
\end{enumerate}
\end{proposition}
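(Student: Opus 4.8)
The plan is to prove (1) and (2) by a simultaneous induction on the ambient dimension $m$, reducing each inductive step to the one--variable case, which is settled by Theorem \ref{thm:monotonicity} together with the dichotomy for definable subsets of $M$ recorded in \cite[Lemma 2.3]{Fuji4} (a definable subset of $M$ either contains an open interval or is discrete and closed). For (2) one may also assume $n=1$: apply the scalar case to the coordinate function $f_1$ to get an open box on which $f_1$ is continuous, then to $f_2$ on that box, and so on. When $m=1$, assertion (1) is immediate: if $X_1$ and $X_2$ both had empty interior they would be discrete and closed by \cite[Lemma 2.3]{Fuji4}, hence so would $X_1\cup X_2$, contradicting that it has nonempty interior. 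Assertion (2) with $m=1$ follows from Theorem \ref{thm:monotonicity}: the set $X_c\cup X_+\cup X_-=I\setminus X_d$ is open and nonempty because $X_d$ is discrete and closed, so one of $X_c,X_+,X_-$ contains an open interval, on which $f$ is continuous (locally constant on $X_c$, and continuous on $X_+$ and $X_-$ by conditions (3) and (4) of Theorem \ref{thm:monotonicity}).

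For the inductive step, fix $m\ge 2$, assume both assertions in all dimensions below $m$, and split coordinates as $M^m=M^{m-1}\times M$. For (1) pick an open box $B_0=B'\times J\subseteq\myint(X_1\cup X_2)$ with $B'\subseteq M^{m-1}$ an open box and $J\subseteq M$ an open interval, and replace $X_i$ by $X_i\cap B_0$ so that $X_1\cup X_2=B_0$; for (2) pick such a box $B_0\subseteq A$ and work with $f|_{B_0}$. Fibring over $B'$, applying the case $m=1$ on each fibre and then assertion (1) in dimension $m-1$, we obtain an open box $B''\subseteq B'$ such that, for (1), after possibly swapping $X_1$ and $X_2$, the fibre $(X_1)_a$ has nonempty interior in $M$ for every $a\in B''$, and, for (2), $f(a,\cdot)$ is continuous on some open subinterval of $J$ for every $a\in B''$. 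It then remains to prove the following \emph{uniformization} statement: if $Z\subseteq M^{m-1}\times M$ is definable and every fibre $Z_a$ with $a$ in an open box $B''$ has nonempty interior in $M$, then $Z$ has nonempty interior in $M^m$. Granting this, (1) is finished (take $Z=X_1$); for (2) one applies it to the definable set of pairs $(a,t)$ at which $f(a,\cdot)$ is continuous, obtaining an open subbox on which $f$ is continuous in the last variable alone, and then a further argument of the same kind — controlling the discontinuity locus of $f$ and the slices $f(\cdot,t)$ — upgrades this to joint continuity on a smaller open box.

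To prove the uniformization I would measure fibres by the definable function $\lambda(a,t)=\sup\{\varepsilon\in[0,1]\mid(t-\varepsilon,t+\varepsilon)\subseteq Z_a\}$, which exists by definable completeness. One first localizes: the sets $\{a\in B''\mid Z_a\cap(-r,r)\neq\emptyset\}$ form an increasing definable family whose union is $B''$, so a definable Baire--type statement forces one of them to have nonempty interior, and after shrinking $B''$ and choosing $r$ and $\delta>0$ (via continuity of an auxiliary definable function provided by assertion (2) in dimension $m-1$) we may assume that every $Z_a$, $a\in B''$, contains an interval of radius $\delta$ centred in $(-r,r)$. The definable Baire statement used here is itself a consequence of Theorem \ref{thm:monotonicity}: if an increasing definable family of closed sets with empty interior covered an open interval, the definable first--entry function would, by Theorem \ref{thm:monotonicity}, be monotone or constant and continuous on a subinterval, placing an open interval inside one member of the family, a contradiction. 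One checks that $\{t\mid(t-\delta,t+\delta)\subseteq Z_a\}$ is closed, defines $\nu(a)$ to be its infimum inside $[-r,r]$ so that $(\nu(a)-\delta,\nu(a)+\delta)\subseteq Z_a$ for all $a\in B''$, invokes assertion (2) in dimension $m-1$ to make $\nu$ continuous on an open subbox, and then uses continuity of $\nu$ at one point to read off an honest open box contained in $Z$.

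The step I expect to be the real obstacle is exactly this uniformization, i.e.\ passing from ``large, resp.\ continuous, on every fibre over an open box'' to ``large, resp.\ continuous, on an open box of $M^m$''. This is where definable completeness is genuinely used, through both the definable Baire statement extracted from Theorem \ref{thm:monotonicity} and the explicit supremum/infimum constructions, and the most delicate issue is that the increasing families one naturally writes down need not be closed, so some care (or a strengthened definable Baire lemma) is needed to apply the Baire argument to them; if a direct treatment is awkward, property (a) and the dimension theory it yields in \cite{Fuji4} give an alternative route to the same conclusion. The deduction of joint continuity from separate continuity in the last two steps of (2) is a secondary delicate point of the same flavour. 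Everything else — the reduction to $n=1$, the fibring, and the base case $m=1$ — is routine once the monotonicity theorem is available.
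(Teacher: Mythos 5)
The paper does not actually prove this proposition: its ``proof'' is the single line that both assertions follow from Theorem \ref{thm:monotonicity} via \cite[Theorem 2.11(iii)]{Fuji4}, so what you were really asked to do is reconstruct the implication hidden in that citation. Your architecture --- simultaneous induction on $m$, reduction of (2) to $n=1$, fibering over the first $m-1$ coordinates, and a uniformization lemma proved by definable choice through attained infima plus the inductive case of (2) --- is the standard way to do this and is surely close to what \cite{Fuji4} does. The serious gap is the last step of (2). Passing from ``$f(a,\cdot)$ is continuous on a common interval $(p,q)$ for every $a$ in a box $B$'' to joint continuity is \emph{not} ``a further argument of the same kind'': separate continuity never yields joint continuity by itself. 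What is actually needed is (i) that $f(a,\cdot)$ be monotone or constant on $(p,q)$ with the same type for all $a\in B$ (from Theorem \ref{thm:monotonicity}, your uniformization, and (1) in dimension $m-1$ applied to the three type-sets); (ii) a second uniformization with the roles of the two variable blocks exchanged, giving a box on which $a\mapsto f(a,t)$ is continuous at every $a$ for every $t$ --- note this is a uniformization over a one-dimensional base with $(m-1)$-dimensional fibres, a statement you never formulate; and (iii) the trapping argument: given $(a_0,t_0)$ and an interval $V\ni f(a_0,t_0)$, pick $t_1<t_0<t_2$ with $f(a_0,t_i)\in V$ and use continuity of $f(\cdot,t_1),f(\cdot,t_2)$ at $a_0$ together with monotonicity in $t$ to force $f(a,t)\in V$ nearby. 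Your sketch supplies none of (i)--(iii), and since you never arrange monotonicity in the last variable, no trapping argument is available as written.

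A second, repairable but real, defect: Proposition \ref{prop:property_bc} is stated for a definably complete locally o-minimal structure $(M,<,\ldots)$ with no group operations, yet your uniformization is written with differences, radii and midpoints --- $(t-\varepsilon,t+\varepsilon)$, ``interval of radius $\delta$ centred in $(-r,r)$'', $\lambda(a,t)=\sup\{\varepsilon\mid (t-\varepsilon,t+\varepsilon)\subseteq Z_a\}$ --- none of which are terms of the language here. The fix is to choose a pair of endpoints $u(a)<v(a)$ with $(u(a),v(a))\subseteq Z_a$ definably, as attained infima/suprema of nonempty closed bounded definable sets, make $u,v$ continuous on a sub-box by (2) in dimension $m-1$, and read off a common subinterval $(p,q)$ from continuity at one point; that version also makes the intermediate ``uniform $\delta$'' stage unnecessary. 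Relatedly, your definable Baire statement is proved for subsets of $M$ but applied to subsets of $M^{m-1}$; the right tool is the first-entry function $a\mapsto\inf\{r>0\mid a\in F_r\}$ combined with (2) in dimension $m-1$ and the max--min principle of \cite{M}, which also disposes of the closedness worry you raise at the end.
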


\begin{proof}
They follow from Theorem \ref{thm:monotonicity} by \cite[Theorem 2.11.(iii)]{Fuji4}.
\end{proof}

The following is the main theorem of this section, which claims that the property (a) holds true.
\begin{theorem}\label{thm:property_a}
The image of a discrete set definable in a definably complete locally o-minimal structure under a coordinate projection is again discrete.
\end{theorem}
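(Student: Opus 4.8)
The plan is to reduce to the one-dimensional case and then induct on the ambient dimension, the engine being the strong monotonicity theorem (Theorem~\ref{thm:monotonicity}). I would start from an elementary remark: if $Z\subseteq M^m$ is any set all of whose coordinate projections $p_j(Z)\subseteq M$ ($1\le j\le m$) are discrete, then $Z$ is itself discrete, since for $z\in Z$ one can choose $\varepsilon_j>0$ with $(z_j-\varepsilon_j,z_j+\varepsilon_j)\cap p_j(Z)=\{z_j\}$ and then the box $\prod_{j}(z_j-\varepsilon_j,z_j+\varepsilon_j)$ meets $Z$ only in $z$. As every coordinate projection of $\pi(X)$ is a coordinate projection of $X$, it then suffices to prove the following one-coordinate version: for every definable discrete $X\subseteq M^n$ and every projection $p$ of $M^n$ onto a single coordinate, $p(X)\subseteq M$ is discrete. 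I would prove this by induction on $n$, the case $n=1$ being trivial.

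For the inductive step I would assume the one-coordinate claim in all dimensions below $n$ and, permuting coordinates, take $p$ to be the projection onto the first coordinate. Suppose, for contradiction, $p(X)$ is not discrete; by \cite[Lemma 2.3]{Fuji4} it then contains a nonempty open interval $I$. For $a\in I$ the fibre $X_a=\{u\in M^{n-1}\mid (a,u)\in X\}$ is definable, nonempty, and discrete (a homeomorphic copy of a subspace of the discrete set $X$). I would then manufacture a definable section $\gamma=(\gamma_2,\dots,\gamma_n)\colon I\to M^{n-1}$ with $(a,\gamma(a))\in X$, one coordinate at a time: by the induction hypothesis $p_1(X_a)$ is a discrete subset of $M$, hence closed by \cite[Lemma 2.3]{Fuji4}, so definable completeness lets me take $\gamma_2(a)$ to be $\min(p_1(X_a)\cap M_{\ge 0})$ when that set is nonempty and $\max p_1(X_a)$ otherwise, and then iterate inside the discrete nonempty slice $\{w\in M^{n-2}\mid(\gamma_2(a),w)\in X_a\}$, and so on. The graph $\Gamma=\{(a,\gamma(a))\mid a\in I\}$ lies in $X$, hence is discrete, and $a\mapsto(a,\gamma(a))$ is injective.

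Next I would shrink $I$. Applying Theorem~\ref{thm:monotonicity} to $\gamma_2$ on $I$ splits $I$ into a discrete closed part and open parts on which $\gamma_2$ is respectively locally constant, locally strictly increasing and continuous, and locally strictly decreasing and continuous. The discrete part contains no interval, so the union of the three open parts is a nonempty open set and hence contains an open interval; by Proposition~\ref{prop:property_bc}(1) applied twice I may pass to a subinterval lying inside one of the three open parts, on which $\gamma_2$ is continuous (a locally constant function being continuous). Repeating for $\gamma_3,\dots,\gamma_n$ yields an open interval $J\subseteq I$ on which all of $\gamma_2,\dots,\gamma_n$ are continuous. Then $a\mapsto(a,\gamma_2(a),\dots,\gamma_n(a))$ is a continuous injection of $J$ into $M^n$ whose inverse is the restriction of $p$, hence a homeomorphism onto its image $\Gamma_J\subseteq\Gamma\subseteq X$; so $\Gamma_J$ is a nonempty subspace of the discrete set $X$ that is homeomorphic to an open interval and therefore has a non-isolated point, a contradiction. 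This proves the one-coordinate claim, and with it the theorem.

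The step I expect to be the main obstacle is organising the induction so that it stays non-circular: the section-building construction must invoke the one-coordinate claim only in ambient dimensions strictly smaller than $n$, and the reduction of the general projection to the one-coordinate case must be carried out once and for all, outside the induction. A secondary technical point that must be checked is that both alternatives delivered by the monotonicity theorem --- locally constant, and locally strictly monotone with continuity --- really give a continuous restriction of $\gamma_i$, since it is precisely this continuity that makes the final graph an embedded interval and hence non-discrete.
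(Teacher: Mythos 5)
Your proof is correct and follows essentially the same route as the paper's: reduce to single-coordinate projections, induct on $n$, build a definable section over an interval in the image by taking $\inf$/$\sup$ on the discrete (hence closed) fibers, and derive a contradiction from the existence of a continuous restriction of that section to a subinterval. The only cosmetic difference is that you obtain the continuous subinterval by applying Theorem~\ref{thm:monotonicity} to each coordinate of the section directly, whereas the paper invokes Proposition~\ref{prop:property_bc}(2), which is itself derived from that theorem.
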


\begin{proof}
Let $D$ be a definable discrete subset of $M^n$ and $\pi:M^n \rightarrow M^d$ be a coordinate projection.
We demonstrate that the image $\pi(D)$ is discrete.

We first reduce to the case in which $d=1$.
Let $\rho_k:M^n \to M$ be the projection on the $k$-th coordinate for $1 \leq k \leq d$.
The image $\rho_k(D)$ is discrete by the assumption.
We have $\pi(D) \subset \prod_{k=1}^d \rho_k(D)$. 
Therefore, $\pi(D)$ is discrete because $\prod_{k=1}^d \rho_k(D)$ is discrete.

We may assume that $d=1$ and $\pi$ is the coordinate projection onto the last coordinate.
We prove the theorem by induction on $n$ when $d=1$.
There is nothing to prove when $n=1$.
Before we consider the case in which $n>1$, we demonstrate the following claim using  the induction hypothesis.
\medskip

\textbf{Claim.} 
Let $l$ and $m$ be nonnegative integers with $l<n$.
Let $X \subset M^l \times M^m$ be a definable subset and $\Pi:M^l \times M^m \rightarrow M^m$ be the coordinate projection onto the last $m$ coordinates.
Assume that the fiber $X_x=\{y \in M^l\;|\; (y,x) \in X\}$ is discrete for any $x \in \Pi(X)$.
There exists a definable map $f:\Pi(X) \rightarrow X$ such that the composition $\Pi \circ f$ is the identity map on $\Pi(X)$.
\medskip

We demonstrate the claim by induction on $l$.
The claim is trivial when $l=0$.
We next consider the case $l=1$.
Fix $c \in M$.
We define the definable function $g:\Pi(X) \to M$ by
\begin{align*}
g(x) = \left\{
\begin{array}{ll}
\inf X_x \cap (c,+\infty) & \text{ if }X_x \cap (c,+\infty) \neq \emptyset \text{ and }\\
\sup X_x \cap (-\infty,c) & \text{ otherwise.}
\end{array}
\right.
\end{align*}
We consider the definable map $f:\Pi(X) \rightarrow M^{1+m}$ given by $f(x)=(g(x),x)$.
Since $X_x$ is discrete and closed by \cite[Lemma 2.3]{Fuji4}, we have $f(x) \in X$ for all $x \in \Pi(X)$.
We have demonstrated when $l=1$.

We consider the case $l>1$.
Let $\Pi_1:M^l \times M^m \rightarrow M^{m+1}$ and $\Pi_2:M^{m+1} \rightarrow M^m$ be the projections onto the last $m+1$ coordinate and onto the last $m$ coordinates, respectively.
We have $\Pi=\Pi_2 \circ \Pi_1$.
It is obvious that the fibers $\{x \in M^{l-1}\;|\;(x,y,z) \in X\}$ with respect to the projection $\Pi_1$ are discrete for all $y \in M$ and $z \in M^m$ with $(y,z) \in \Pi_1(X)$.
There is a definable map $h_1:\Pi_1(X) \rightarrow X$ such that the composition $\Pi_1 \circ h_1$ is the identity map by the induction hypothesis on $l$.
On the other hand, by the induction hypothesis on $n$, the image of a definable discrete subset of $M^l$ under a coordinate projection is discrete.
It implies that the fibers $\{x \in M\;|\;(x,y) \in \Pi_1(X)\}$ with respect to the projection $\Pi_2$ are discrete for all $y \in \Pi(X)$.
The claim for $l=1$ provides a definable map $h_2: \Pi(X) \rightarrow \Pi_1(X)$ such that the composition $\Pi_2 \circ h_2$ is the identity map.
The composition $f=h_1 \circ h_2$ is the desired map.
We have demonstrated the claim.

We return to the proof of the theorem.
There exists a definable map $f:\pi(D) \rightarrow D$ by the claim.
If $\pi(D)$ is not discrete, $\pi(D)$ contains an open interval $I$ by \cite[Lemma 2.3]{Fuji4}.
By Proposition \ref{prop:property_bc}, $f$ is continuous on some subinterval of $I$.
It is a contradiction to the assumption that $D$ is discrete.
We have demonstrated that $\pi(D)$ is discrete.
\end{proof}

We finally get the following proposition called the property (d) in \cite{Fuji4}.
\begin{proposition}\label{prop:property_a}
Let $X$ be a definable subset of $M^n$ and $\pi: M^n \rightarrow M^d$ be a coordinate projection such that the the fibers $X \cap \pi^{-1}(x)$ are discrete for all $x \in \pi(X)$.
Then, there exists a definable map $\tau:\pi(X) \rightarrow X$ such that $\pi(\tau(x))=x$ for all $x \in \pi(X)$.
\end{proposition}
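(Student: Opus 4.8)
The plan is to deduce this directly from the Claim established inside the proof of Theorem \ref{thm:property_a}, exploiting the fact that property (a) is now available without any restriction on the ambient dimension.

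First I would dispose of the trivial cases $d=0$ and $\pi(X)=\emptyset$, and then reduce to the situation where $\pi$ is the projection onto the last $d$ coordinates. Indeed, a coordinate projection $\pi\colon M^n\to M^d$ factors as $\pi=\Pi\circ\sigma$, where $\sigma\colon M^n\to M^n$ is the coordinate permutation that moves the $d$ coordinates read off by $\pi$ into the last $d$ slots, and $\Pi\colon M^{n-d}\times M^d\to M^d$ is the projection onto the last $d$ coordinates. Since $\sigma$ is a definable homeomorphism, $\sigma(X)$ is definable, its fibers over $\Pi$ are the $\sigma$-images of the fibers of $X$ over $\pi$ and hence discrete, and $\Pi(\sigma(X))=\pi(X)$. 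If $g\colon\pi(X)\to\sigma(X)$ is a definable section of $\Pi$, then $\tau:=\sigma^{-1}\circ g$ is a definable section of $\pi$. So it suffices to treat $\pi=\Pi$ with $l:=n-d$ and $m:=d$.

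Now the hypothesis is exactly that the fiber $X_x=\{y\in M^l\mid(y,x)\in X\}$ is discrete for every $x\in\Pi(X)$, which is precisely the hypothesis of the Claim in the proof of Theorem \ref{thm:property_a}; and since $d\ge 1$ we have $l=n-d<n$, so the restriction $l<n$ imposed there is satisfied. That Claim yields a definable $f\colon\Pi(X)\to X$ with $\Pi\circ f=\mathrm{id}_{\Pi(X)}$, which is the desired $\tau$. Should one prefer not to invoke a claim buried inside an earlier proof, one can simply rerun its short induction on $l$: the only external input was the induction hypothesis on $n$, namely property (a) in dimensions below $n$, and this is now Theorem \ref{thm:property_a} itself, so the argument goes through for all $l$.

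I do not anticipate any genuine obstacle here: all the substance sits in Theorem \ref{thm:property_a}, and what is left is the bookkeeping of the coordinate permutation together with the observation that the auxiliary Claim, once property (a) is unconditional, already delivers the statement.
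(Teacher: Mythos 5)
Your argument is correct. Where the paper disposes of this proposition in one line by citing the implication ``property (a) $\Rightarrow$ property (d)'' from \cite[Theorem 2.11.(ii)]{Fuji4}, you instead re-derive it from the Claim embedded in the proof of Theorem \ref{thm:property_a}: after the standard permutation bookkeeping you observe that the hypothesis of the proposition is literally the hypothesis of that Claim with $l=n-d<n$, and that once Theorem \ref{thm:property_a} is established the induction hypothesis the Claim relied on is available unconditionally. This is essentially the same underlying mechanism as the cited result (definable choice on discrete fibers via $\inf$/$\sup$, plus discreteness of projections of discrete sets), so mathematically nothing new is happening; what your route buys is self-containedness --- the reader need not consult \cite{Fuji4} --- at the cost of leaning on a claim that is formally internal to another proof and was stated there only for $l<n$. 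Your remark that the restriction $l<n$ is harmless here (and removable in general) is exactly the point one needs to make for that reuse to be legitimate. The only cases you wave at rather than treat, $d=0$ and $\pi(X)=\emptyset$, are genuinely trivial or degenerate, so there is no gap.
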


\begin{proof}
It follows from Theorem \ref{thm:property_a} by \cite[Theorem 2.11.(ii)]{Fuji4}.
\end{proof}

\subsection{Consequences of property (a)}

In \cite{Fuji4}, the first author demonstrated several tame topological properties of definable sets and tame dimension theory assuming that the property (a) in Section \ref{sec:intro} holds true.
They hold true without assuming the property (a) thanks to Theorem \ref{thm:property_a}.
This subsection summarizes them for the readers' convenience.

We first define the dimension of a definable set as follows:
\begin{definition}[Dimension]\label{def:dim}
Consider an expansion of a densely linearly order without endpoints $\mathcal M=(M,<,\ldots)$.
Let $X$ be a nonempty definable subset of $M^n$.
The dimension of $X$ is the maximal nonnegative integer $d$ such that $\pi(X)$ has a nonempty interior for some coordinate projection $\pi:M^n \rightarrow M^d$.
We set $\dim(X)=-\infty$ when $X$ is an empty set.
\end{definition}

The following proposition summarizes the results on dimension in \cite{Fuji4}.
\begin{proposition}\label{prop:dim}
Consider a definably complete locally o-minimal structure $\mathcal M=(M,<,\ldots)$.
The following assertions hold true.
\begin{enumerate}
\item[(1)] A definable set is of dimension zero if and only if it is discrete.
When it is of dimension zero, it is also closed.
\item[(2)] Let $X \subset Y$ be definable sets.
Then, the inequality $\dim(X) \leq \dim(Y)$ holds true.
\item[(3)] Let $\sigma$ be a permutation of the set $\{1,\ldots,n\}$.
The definable map $\overline{\sigma}:M^n \rightarrow M^n$ is defined by $\overline{\sigma}(x_1, \ldots, x_n) = (x_{\sigma(1)},\ldots, x_{\sigma(n)})$.
Then, we have $\dim(X)=\dim(\overline{\sigma}(X))$ for any definable subset $X$ of $M^n$.
\item[(4)] Let $X$ and $Y$ be definable sets.
We have $\dim(X \times Y) = \dim(X)+\dim(Y)$.
\item[(5)] Let $X$ and $Y$ be definable subsets of $M^n$.
We have 
\begin{align*}
\dim(X \cup Y)=\max\{\dim(X),\dim(Y)\}\text{.}
\end{align*}
\item[(6)] Let $f:X \rightarrow M^n$ be a definable map. 
We have $\dim(f(X)) \leq \dim X$.
\item[(7)] Let $f:X \rightarrow M^n$ be a definable map. 
The notation $\mathcal D(f)$ denotes the set of points at which the map $f$ is discontinuous. 
The inequality $\dim(\mathcal D(f)) < \dim X$ holds true.
\item[(8)] Let $X$ be a definable set.
The notation $\partial X$ denotes the frontier of $X$ defined by $\partial X = \overline{X} \setminus X$.
We have $\dim (\partial X) < \dim X$.
\item[(9)] A definable set $X$ is of dimension $d$ if and only if the nonnegative integer $d$ is the maximum of nonnegative integers $e$ such that there exist an open box $B$ in $M^e$ and 
a definable injective continuous map $\varphi:B \rightarrow X$ homeomorphic onto its image. 
\item[(10)] Let $X$ be a definable subset of $M^n$.
There exists a point $x \in X$ such that we have $\dim(X \cap B)=\dim(X)$ for any open box $B$ containing the point $x$.
\item[(11)] Let $\varphi:X \rightarrow Y$ be a definable surjective map whose fibers are equi-dimensional; that is, the dimensions of the fibers $\varphi^{-1}(y)$ are constant.
We have $\dim X = \dim Y + \dim \varphi^{-1}(y)$ for all $y \in Y$.  
\end{enumerate}
\end{proposition}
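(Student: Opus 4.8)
All eleven items are the usual axioms for a dimension function on definable sets (compare \cite[Chapter 4]{vdD}), so the strategy is to feed the structural facts now available --- strong monotonicity (Theorem \ref{thm:monotonicity}), properties (b) and (c) (Proposition \ref{prop:property_bc}), property (a) (Theorem \ref{thm:property_a}) and property (d) (Proposition \ref{prop:property_a}), together with the dichotomy that a definable subset of $M$ is either discrete and closed or contains an open interval (\cite[Lemma 2.3]{Fuji4}) --- into the dimension-theoretic arguments that are standard in the o-minimal case, which is precisely what \cite{Fuji4} carries out. I would prove the items in the order in which they depend on one another, noting first the elementary remark used throughout: if $\pi$ is a coordinate projection then $\dim(\pi(S)) \leq \dim(S)$, because a coordinate projection of $\pi(S)$ onto $M^e$ is also a coordinate projection of $S$ onto $M^e$ with the same image.

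\textbf{The easy items.} For (1), if $X$ is discrete then every coordinate projection image of $X$ is discrete by Theorem \ref{thm:property_a}, hence has empty interior since $M$ is densely ordered; as $\dim(X) \geq 0$ for nonempty $X$ this gives $\dim(X)=0$, and closedness is immediate from \cite[Lemma 2.3]{Fuji4}. The converse is the contrapositive ``$X \subseteq M^n$ not discrete $\Rightarrow$ some coordinate projection onto $M^e$, $e \geq 1$, has an image with nonempty interior'', proved by a routine induction on $n$: the case $n=1$ is \cite[Lemma 2.3]{Fuji4}, and for $n>1$ one distinguishes whether some fibre of a coordinate projection $M^n \to M^{n-1}$ is non-discrete (apply the induction hypothesis inside $M^1$ and the elementary remark) or all such fibres are discrete (then the image in $M^{n-1}$ is non-discrete and the induction hypothesis applies there). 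Item (2) is immediate from the definition, item (3) follows because composing a coordinate projection with $\overline{\sigma}$ yields a coordinate projection and $\overline{\sigma}$ is a definable homeomorphism, and item (5) combines (2) for ``$\geq$'' with Proposition \ref{prop:property_bc}(1) for ``$\leq$'' (if the union of the two projection images has nonempty interior, so does one of them).

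\textbf{The structural items.} Items (9) and (10) I would obtain from property (d): a coordinate projection $\pi$ witnessing $\dim(X)=d$ has $\pi(X)$ containing an open box $B$; Proposition \ref{prop:property_a} supplies a definable section $\tau$ of $\pi$ over $\pi(X)$, Proposition \ref{prop:property_bc}(2) makes $\tau$ continuous on a definable open set into which we shrink $B$, and then $\tau|_B$ is a definable continuous injection homeomorphic onto its image (inverse $\pi|_{\tau(B)}$), which gives the nontrivial inequality in (9), while for any $v \in B$ the point $\tau(v)$ witnesses (10) because continuity of $\tau$ forces $\pi(X \cap B')$ to contain a box for every box $B' \ni \tau(v)$; the reverse inequality in (9) uses (2) and (6). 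Item (4) is the special case of (11) applied to the projection $X \times Y \to Y$, whose fibres are all copies of $X$ (the inequality ``$\geq$'' in (4) is in any case immediate by taking the product of the projections witnessing $\dim(X)$ and $\dim(Y)$). Item (6) for maps with finite fibres follows from the graph-and-section argument above; the general case, and items (7) and (8), follow from the decomposition of a definable set into quasi-special submanifolds established in \cite{Fuji4} on the basis of Theorem \ref{thm:monotonicity} and properties (a)--(d), i.e. from \cite[Theorem 2.11]{Fuji4}.

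\textbf{Main obstacle.} The genuine work is concentrated in (11) --- hence in (4) --- and in (7) and (8): I would prove (11) by induction on $\dim Y$, using (10) to localize to the situation where $Y$ contains a box and property (d) to choose a definable section, thereby reducing the fibre-dimension identity to a slicing estimate. The essential difficulty throughout is that, unlike in the o-minimal setting, no global cell decomposition is available --- only the ``local'' tameness encoded in strong monotonicity and properties (a)--(d) --- so arguments that in o-minimality proceed cell by cell must be rerouted through property (a)/(d) and Theorem \ref{thm:monotonicity}. This rerouting is exactly what is carried out in \cite{Fuji4}, from which all eleven assertions follow by \cite[Theorem 2.11]{Fuji4}.
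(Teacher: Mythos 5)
Your proposal is correct and takes essentially the same route as the paper: the paper's entire proof of this proposition is a citation to \cite{Fuji4} (Proposition 3.2, Theorems 3.8, 3.11, 3.14 and Corollary 3.12 there), whose dimension theory becomes unconditional once property (a) is established --- which is exactly the reduction you make, and your sketches of the individual items match the arguments carried out in that reference (your pointer to Theorem 2.11 of \cite{Fuji4} is the only slight miscitation, as that result gives properties (b)--(d) rather than the dimension statements themselves).
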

\begin{proof}
See \cite[Proposition 3.2, Theorem 3.8, Theorem 3.11, Corollary 3.12, Theorem 3.14]{Fuji4}
\end{proof}

\begin{corollary}\label{cor:key}
Let $\mathcal M=(M,<,\ldots)$ be a definably complete locally o-minimal structure.
Let $B$ and $C$ be definable open subsets of $M^m$ and $M^n$, respectively.
If there exists a definable injective map from $B$ to $C$, we have $m \leq n$.
\end{corollary}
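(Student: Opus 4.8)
The plan is to deduce the statement entirely from the dimension theory collected in Proposition~\ref{prop:dim}, without appealing to any invariance‑of‑domain type argument (which would be delicate in this generality, since the given injection need not even be continuous).

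First I would record the two endpoints of the computation. Assuming $B\neq\emptyset$ (the statement is to be understood with this implicit, as the empty case would make it false), $B$ is a nonempty open subset of $M^m$, so applying Definition~\ref{def:dim} with the identity projection $M^m\to M^m$ shows $\dim B=m$; likewise $\dim C=n$.

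Next, let $f:B\to C$ be the given definable injection. Its image $f(B)$ is definable, and because $f$ is injective the inverse $f^{-1}:f(B)\to B$ is a well‑defined map whose graph is obtained from the graph of $f$ by permuting the coordinate blocks; hence $f^{-1}$ is a definable surjection onto $B$. Now Proposition~\ref{prop:dim}(6) applied to $f^{-1}$ yields $m=\dim B=\dim f^{-1}(f(B))\le \dim f(B)$, while $f(B)\subseteq C$ together with Proposition~\ref{prop:dim}(2) gives $\dim f(B)\le \dim C=n$. Combining the two inequalities gives $m\le n$.

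There is no real obstacle here; the only thing one must resist is the temptation to first prove that $f$ restricts to a homeomorphism onto an open subset of $C$ (using Proposition~\ref{prop:property_bc}(2) and an open‑mapping argument). That route works too but is longer and more fragile, whereas the point of the short argument above is simply that $f^{-1}$ is itself a definable map, so part~(6) of Proposition~\ref{prop:dim} applies directly to it and does all the work.
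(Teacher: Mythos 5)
Your proof is correct and takes essentially the same route as the paper: the paper's proof is likewise a one-line deduction from the dimension calculus of Proposition \ref{prop:dim}, citing parts (1), (2) and (11) (viewing $f$ as a definable surjection onto its image with zero-dimensional, hence equi-dimensional, fibers), where you instead apply part (6) to the definable inverse $f^{-1}$. Both arguments come down to the same computation $m=\dim B\le\dim f(B)\le\dim C=n$, and your observation that $B$ must implicitly be nonempty is accurate.
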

\begin{proof}
Immediate from Proposition \ref{prop:dim}(1), (2) and (11).
\end{proof}

We finally recall the notion of quasi-special submanifolds and the decomposition theorem into quasi-special submanifolds.

\begin{definition}\label{def:quasi-special}
Consider an expansion of a densely linearly order without endpoints $\mathcal M=(M,<,\ldots)$.
Let $X$ be a definable subset of $M^n$ and $\pi:M^n \rightarrow M^d$ be a coordinate projection.
A point $x \in X$ is \textit{($X,\pi$)-normal} if there exists an open box $B$ in $M^n$ containing the point $x$ such that $B \cap X$ is the graph of a continuous map defined on $\pi(B)$ after permuting the coordinates so that $\pi$ is the projection onto the first $d$ coordinates.

A definable subset is a \textit{$\pi$-quasi-special submanifold} or simply a \textit{quasi-special submanifold} if, $\pi(X)$ is a definable open set and, for every point $x \in \pi(X)$, there exists an open box $U$ in $M^d$ containing the point $x$ satisfying the following condition:
For any $y \in X \cap \pi^{-1}(x)$, there exist an open box $V$ in $M^n$ and a definable continuous map $\tau:U \rightarrow M^n$ such that $\pi(V)=U$, $\tau(U)=X \cap V$ and the composition $\pi \circ \tau$ is the identity map on $U$.

Let $\{X_i\}_{i=1}^m$ be a finite family of definable subsets of $M^n$.
A \textit{decomposition of $M^n$ into quasi-special submanifolds partitioning $\{X_i\}_{i=1}^m$} is a finite family of quasi-special submanifolds $\{C_i\}_{i=1}^N$ such that $\bigcup_{i=1}^NC_i =M^n$, $C_i \cap C_j=\emptyset$ when $i \not=j$ and $C_i$ has an empty intersection with $X_j$ or is contained in $X_j$ for any $1 \leq i \leq m$ and $1 \leq j \leq N$.
A decomposition $\{C_i\}_{i=1}^N$ of $M^n$ into quasi-special submanifolds \textit{satisfies the frontier condition} if the closure of any quasi-special submanifold $\overline{C_i}$ is the union of a subfamily of the decomposition.
\end{definition}

\begin{proposition}\label{prop:frontier_condition}
Consider a definably complete locally o-minimal structure $\mathcal M=(M,<,\ldots)$.
Let $\{X_i\}_{i=1}^m$ be a finite family of definable subsets of $M^n$.
There exists a decomposition $\{C_i\}_{i=1}^N$ of $M^n$ into quasi-special submanifolds partitioning $\{X_i\}_{i=1}^m$ and satisfying the frontier condition.
Furthermore, the number $N$ of quasi-special submanifolds is not greater than the number uniquely determined only by $m$ and $n$.
\end{proposition}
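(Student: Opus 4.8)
The plan is to reduce Proposition~\ref{prop:frontier_condition} to the existing machinery in \cite{Fuji4} by first establishing a decomposition into quasi-special submanifolds (without the frontier condition), and then refining it so that the frontier condition holds, while keeping the bound on $N$ uniform in $m$ and $n$. The existence of a decomposition $\{C_i\}_{i=1}^N$ of $M^n$ into quasi-special submanifolds partitioning a given finite family $\{X_i\}_{i=1}^m$ is exactly the content of the decomposition theorem in \cite{Fuji4}, which is available unconditionally thanks to Theorem~\ref{thm:property_a}; moreover the bound on $N$ in terms of $m$ and $n$ is also part of that statement. So the substantive new point is the frontier condition.

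The first step is an induction on $n$. For $n=1$ a quasi-special submanifold is just a point or an open interval, and a decomposition of $M$ partitioning finitely many definable subsets is handled by local o-minimality together with definable completeness; the frontier condition is automatic once one throws in the endpoints of the constituent intervals (which form a discrete closed definable set by Proposition~\ref{prop:dim}(1) and \cite[Lemma 2.3]{Fuji4}). For the inductive step, I would start from a decomposition $\{C_i\}$ partitioning $\{X_i\}$, form the frontiers $\partial C_i$, and observe that $\dim(\partial C_i) < \dim(C_i) \leq n$ by Proposition~\ref{prop:dim}(8). Using Proposition~\ref{prop:dim}(5), the union $\bigcup_i \partial C_i$ is a definable set of dimension $< n$. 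The idea is then to re-decompose, now asking the decomposition to additionally partition the family $\{\partial C_i\}$ as well as $\{C_i\}$, and to iterate: each closure $\overline{C_i} = C_i \cup \partial C_i$ should become a union of members of the refined decomposition. Because each refinement only adds sets of strictly smaller dimension, the process terminates after at most $n$ rounds, and at each round the number of sets that must be partitioned grows in a way controlled only by $m$ and $n$, so the final $N$ remains bounded by a quantity depending only on $m$ and $n$.

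The main obstacle is that a single pass of the decomposition theorem does not by itself guarantee that $\overline{C_i}$ is a \emph{union} of pieces: it only guarantees that each piece is contained in or disjoint from $C_i$ and from $\partial C_i$, which is not the same as $\overline{C_i}$ being a union of pieces, because a piece contained in $\overline{C_i}$ might be one of the lower-dimensional strata rather than part of $C_i$. The way around this is to argue that after the decomposition is stable under the refinement operation — i.e. after re-decomposing so as to also partition all the frontiers already produced — every piece $C$ with $C \cap \overline{C_i} \neq \emptyset$ is in fact contained in $\overline{C_i}$; this follows because $\overline{C_i}$ is closed and the decomposition partitions the definable boundary data, so a piece cannot straddle $\partial(\overline{C_i})$. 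One must check that stability is reached in finitely many steps; this is where the dimension drop in Proposition~\ref{prop:dim}(8) is essential, and where the uniform bound from the decomposition theorem in \cite{Fuji4} must be invoked carefully to keep $N$ under control. I would record the termination argument as a short lemma counting dimensions, then assemble the pieces: take the coarsest common refinement partitioning $\{X_i\} \cup \{C_i\} \cup \{\partial C_i\} \cup \cdots$ up to the stable stage, verify the frontier condition piece by piece, and read off the bound on $N$ from the iterated application of the decomposition theorem.
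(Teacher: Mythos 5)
The paper does not reprove this statement at all: its proof is a one-line citation of \cite[Theorem 4.5]{Fuji4}, where the decomposition into quasi-special submanifolds \emph{with} the frontier condition and with the uniform bound on $N$ is already established under property (a); the only new input of the present paper is that property (a) holds unconditionally (Theorem \ref{thm:property_a}), which you correctly identify. So everything in your proposal beyond the first paragraph is an attempt to reprove the frontier-condition refinement from scratch, and that attempt has a genuine gap.

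The gap is the termination of your refinement process. You correctly observe that one pass of the decomposition theorem, applied to $\{X_i\}\cup\{C_i\}\cup\{\partial C_i\}$, makes each $\overline{C_i}$ a union of \emph{new} pieces, but the frontier condition must hold for the closures of the \emph{new} pieces. A new top-dimensional piece $D$ contained in an open piece $C_i$ has frontier $\partial D\subseteq(C_i\setminus D)\cup\partial C_i$, and the portion $\partial D\cap C_i$ is $(n-1)$-dimensional material that was not in the family being partitioned; partitioning it requires another pass, which again subdivides open pieces and creates yet more such material. The dimension of the unpartitioned frontier material therefore does not drop from one round to the next, so the claim that ``the process terminates after at most $n$ rounds'' is unjustified, and your appeal to a ``stable stage'' is circular --- stability under refinement \emph{is} the frontier condition. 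This also undermines the cardinality bound: without a bound on the number of rounds, iterating the decomposition theorem gives no bound on $N$ in terms of $m$ and $n$ alone. The standard repair (and, in substance, what \cite{Fuji4} does) is to organize the construction by downward induction on dimension, fixing the $d$-dimensional strata and then refining only inside a \emph{closed} definable set of dimension $<d$ containing all their frontiers, so that higher-dimensional pieces are never disturbed; termination then comes from Proposition \ref{prop:dim}(1), since zero-dimensional definable sets are closed and contribute no frontier. Your sketch is missing precisely the lemma that the refinement can be carried out relative to such a closed lower-dimensional set without altering the already-constructed strata.
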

\begin{proof}
See \cite[Theorem 4.5]{Fuji4}.
\end{proof}

\section{Basic property of structures having definable bounded multiplication}\label{sec:basic}

We defined an expansion of an ordered group having definable multiplication in Section \ref{sec:intro}.
We can easily construct a definably complete locally o-minimal expansion of an ordered group having definable multiplication using the notion of simple product in \cite[Section 4]{KTTT}.
The following examples is also given in \cite[Example 4.12]{Fuji6}.

\begin{example}\label{ex:mult}
Let $\widetilde{\mathbb R}$ be an arbitrary o-minimal expansion of the ordered group of reals.
We define a locally o-minimal expansion $\mathcal M:=\mathcal M(\widetilde{\mathbb R})=(\mathbb R,<,0,+,\mathbb Z,\ldots)$ of the ordered group of reals.
We define that a subset $X$ of $\mathbb R^n$ is definable in $\mathcal M$ if and only if, there exist finitely many subsets $X_1, \ldots, X_k$ of $[0,1)^n$ definable in $\widetilde{\mathbb R}$ and a map $\tau:\mathbb Z^n \rightarrow \{1, \ldots, k\}$ such that, for any $(m_1,\ldots, m_n) \in \mathbb Z^n$, we have 
\begin{align*}
X \cap \left(\prod_{i=1}^n [m_i,m_i+1)\right) &= (m_1,\ldots, m_n)+X_{\tau(m_1,\ldots, m_n)}\text{,}
\end{align*}
where the right hand of equality is 
\begin{align*}
&\{(x_1,\ldots, x_n) \in \mathbb R^n\;|\; (x_1-m_1,\ldots, x_n-m_n) \in X_{\tau(m_1,\ldots, m_n)}\}\text{.}
\end{align*}
A standard argument shows that $\mathcal M$ is a locally o-minimal structure.
The structure $\mathcal M$ is not an o-minimal structure.
It is also easy to demonstrate that a bounded subset $X$ of $\mathbb R^n$ is definable in $\mathcal M$ if it is definable in $\widetilde{\mathbb R}$.
We omit the details.

Consider the graph of the addition $\Gamma=\{(x,y,z) \in \mathbb R^3\;|\; z=x+y\}$.
Set $X_1=\Gamma \cap [0,1)^3$ and $X_2 :=\{(x,y,z) \in [0,1)^3\;|\;x+y \geq 1,\ z=x+y-1\}$.
We obviously have 
\begin{align*}
\Gamma \cap \prod_{i=1}^3[m_i,m_i+1) = \left\{
\begin{array}{ll}
(m_1,m_2,m_3)+X_1 & \text{ if }m_3=m_1+m_2,\\
(m_1,m_2,m_3)+X_2 & \text{ if }m_3=m_1+m_2+1 \text{ and }\\
\emptyset & \text{ otherwise.}
\end{array}
\right.
\end{align*}
It implies that the graph $\Gamma$ is definable in $\mathcal M$.
The structure $\mathcal M$ is an expansion of an ordered group.

When $\widetilde{\mathbb R}$ is an expansion of an ordered field, the structure $\mathcal M$ has definable bounded multiplication compatible to $+$ because any bounded set definable in $\widetilde{\mathbb R}$ is also definable in $\mathcal M$.
\end{example}

We give several basic properties of a structure having definable bounded multiplication.
We defined a uniformly locally o-minimal structure of the second kind in \cite{Fuji}.
It becomes almost o-minimal defined in \cite{Fuji6} when it has definable bounded multiplication.
\begin{proposition}\label{prop:almost}
A uniformly locally o-minimal expansion of the second kind of an ordered abelian group having definable bounded multiplication is almost o-minimal.
In particular, it is definably complete.
\end{proposition}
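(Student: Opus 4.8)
The plan is to unwind the definition of almost o-minimality from \cite{Fuji6}: it asks that every bounded definable subset of $M$ be a finite union of points and open intervals. Granting this, the ``in particular'' is a formality, since for a nonempty definable $X\subseteq M$ bounded above by $b$ and any $a\in X$ the set $X\cap[a,b]$ is a finite union of points and open intervals and therefore has a supremum in $M$, which is $\sup X$; infima are symmetric, and empty or unbounded sets give $\pm\infty$. So the whole task is to prove that a bounded definable $X\subseteq M$ is a finite union of points and open intervals.

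First I would normalise using the ordered abelian group: after a translation we may assume that $\overline X$ is contained in a bounded open interval $I_0$ whose closure $\overline{I_0}$ is in turn contained in a bounded open interval on which the product $\cdot$ is definable. On that region the operations $+$, $-$ and $\cdot$ are all definable in $\mathcal M$, so the standard o-minimal-field style manipulations apply to $\mathcal M$-definable subsets of the region, and, crucially, every auxiliary definable set produced below can be kept inside a fixed bounded box, so that merely ``bounded'' multiplication suffices. Local o-minimality gives, for each $p\in\overline{I_0}$, an open interval $I_p\ni p$ on which $X$ is a finite union of points and open intervals. The decisive step is to upgrade this pointwise data to \emph{finitely many} such intervals; this is where I would use uniform local o-minimality of the second kind, applied to an appropriately chosen definable family built from $X$ (for instance $\{(s,t,x)\in M^3 : x\in X\cap(s,t)\}$), to obtain a uniform bound on the number of connected components of $X$ inside small subintervals, and then patch along a finite subcover of $\overline{I_0}$. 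Once $X$ has only finitely many connected components, each is a bounded order-convex definable set, and the field operations available on $I_0$ force it to be a point or an open interval, as required.

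The main obstacle is the passage from local to global finiteness: one cannot yet invoke definable completeness, nor definable connectedness of closed intervals (which is essentially equivalent to a fragment of it), so the argument must squeeze the finiteness out of the second-kind uniformity together with the field structure that the ordered-group and bounded-multiplication hypotheses put on each bounded box. Concretely, this is also the place where a ``Dedekind cut with no filler'' inside $I_0$ must be excluded, and it is the combination of all three hypotheses that rules this out. In practice I expect to run this step through the structural theory of uniformly locally o-minimal structures of the second kind developed in \cite{Fuji, Fuji3} rather than reprove it, so that the genuinely new content is just the observation that definable bounded multiplication supplies the missing geometric input those results require.
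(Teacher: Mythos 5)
There is a genuine gap. You correctly reduce the statement to showing that a bounded definable $X\subseteq M$ is a finite union of points and open intervals, and you correctly identify that the whole difficulty is passing from local to global finiteness; but your proposed mechanism for that passage does not work. You want to cover $\overline{I_0}$ by the intervals supplied by local o-minimality and then ``patch along a finite subcover,'' yet no compactness of $\overline{I_0}$ is available here: extracting a finite subcover is itself a form of definable completeness, which (as you note) cannot be invoked at this stage. Likewise, uniform local o-minimality of the second kind applied to a family such as $\{(s,t,x): x\in X\cap(s,t)\}$ only yields, at each point $a$ and each parameter value, \emph{some} small interval around $a$ on which the fibers are finite unions of points and intervals; it does not bound the number of components of $X$ on a prescribed subinterval, so there is nothing to patch. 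Deferring the issue to the ``structural theory'' of \cite{Fuji, Fuji3} does not close this, since those results are exactly of the same local character.

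The paper's proof uses a single decisive idea that your proposal is missing: a scaling trick. Choose $R>0$ with $X\subseteq(-R,R)$ and consider the definable family $Y=\{(r,rx): 0<r<1,\ x\in X\}$, which is definable precisely because the multiplication is only used on a bounded region. Uniform local o-minimality of the second kind applied at the point $0$ (in the fiber variable) and the parameter value $0$ gives $\delta,\varepsilon>0$ such that $Y_r\cap(-\delta,\delta)$ is a finite union of points and open intervals for all $0<r<\varepsilon$. Picking $r$ with $rR<\delta$ and $r<\varepsilon$ makes $Y_r\cap(-\delta,\delta)=rX$, and since $x\mapsto rx$ is a strictly increasing bijection, $X$ itself is a finite union of points and open intervals. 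In other words, instead of covering $X$ by many small intervals, one contracts all of $X$ into a single small interval where the second-kind uniformity applies; this is the geometric input that definable bounded multiplication provides, and it is the step your outline does not supply. (The ``in particular'' clause is then quoted from \cite[Lemma 4.6]{Fuji6}; your direct argument for suprema is fine but unnecessary.)
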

\begin{proof}
Let $\mathcal M=(M,<,0,+,\ldots)$ be the structure in consideration.
 We have only to demonstrate that a bounded definable set $X$ of $M$ is a finite union of points and open intervals.
Let $R$ be a positive element such that $X$ is contained in the open interval $(-R,R)$.
Consider the set $$Y=\{(r,rx) \in M^2\;|\; 0<r<1 \text{ and }x \in X\}\text{.}$$
It is definable because we only use multiplication in a bounded area for its definition. 
Since $\mathcal M$ is uniformly locally o-minimal of the second kind, we can take $\delta>0$ and $\varepsilon>0$ such that, for any $0<r<\varepsilon$, the intersection $Y_r \cap (-\delta,\delta)$ is a finite union of points and open intervals, where $Y_r$ is the fiber at $r$ given by $Y_r:=\{x \in M\;|\; (r,x) \in Y\}$.
Take a sufficiently small $r>0$ such that $rR<\delta$ and $r<\varepsilon$.
For such an $r$, we have $Y_r\cap (-\delta,\delta)=rX:=\{rx \in M\;|\; x \in X\}$.
It implies that $X$ is a finite union of points and open intervals.

The almost o-minimal structure is definably complete by \cite[Lemma 4.6]{Fuji6}.
\end{proof}

A definably complete expansion of an ordered abelian group having definable bounded multiplication is always a real closed field.
\begin{proposition}\label{prop:rcf}
Consider a definably complete expansion of an ordered abelian group having definable bounded multiplication $\mathcal M=(M,<,0,+,\ldots)$.
Then, $(M,<,0,1,+,\cdot)$ is a real closed field.
\end{proposition}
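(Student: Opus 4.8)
The plan is to show that the field $(M,<,0,1,+,\cdot)$ is real closed by verifying the two standard algebraic criteria: every positive element has a square root, and every polynomial of odd degree over $M$ has a root. Both statements will be reduced, via the bounded multiplication available in $\mathcal M$ together with definable completeness, to an application of the intermediate value property, which holds in any definably complete expansion of a dense linear order (a definable subset of $M$ that is the image of a definably continuous function on an interval, and that is not an interval, would contradict the supremum property — more precisely, one uses that a definable continuous function on $[a,b]$ taking values of opposite sign at the endpoints has a zero, which is a routine consequence of definable completeness).

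First I would treat square roots. Given $a \in M$ with $a > 0$, I want $b$ with $b^2 = a$. The obstacle is that the function $x \mapsto x^2$ is \emph{a priori} only definable on bounded boxes, so I cannot directly apply the intermediate value theorem to $x^2 - a$ on a large interval. To get around this, rescale: choose a bounded interval $I = (0,r)$ and a small positive $t \in M$ so that, after multiplying through by $t^2$ (which is legitimate since all the relevant products live in a fixed bounded box once $r$ and $t$ are chosen small enough), the equation $b^2 = a$ becomes $(tb)^2 = t^2 a$ with $t^2 a$ and the candidate values $tb$ all lying inside $I$. Concretely, the set $\{x \in I \mid x \cdot x < t^2 a\}$ is definable (only bounded multiplication is used), nonempty, and bounded above inside $I$; its supremum $s$, which exists by definable completeness, satisfies $s^2 = t^2 a$ by the usual trichotomy argument (if $s^2 < t^2 a$ or $s^2 > t^2 a$, continuity of $x \mapsto x^2$ on the bounded interval $I$ — available because the restriction of multiplication to $I \times I$ is definable — lets one move $s$ slightly and contradict the choice of $s$). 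Then $s/t$, obtained by multiplying $s$ by the inverse of $t$ in the field $(M,<,0,1,+,\cdot)$, is the desired square root; note $t^{-1}$ exists as a field element even though multiplication by it is not definable, which is fine since we only need the element, not definability.

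Next, odd-degree polynomials. Let $p(X) = X^{2k+1} + c_{2k} X^{2k} + \dots + c_0$ be monic of odd degree with coefficients in $M$. Over the real closed field one knows $p$ changes sign, and one would like to invoke the intermediate value property. Again the difficulty is that evaluation of $p$ is not globally definable. The fix is the same rescaling trick: there is a standard bound (Cauchy's bound) $B \in M$, built from $1 + \sum |c_i|$ using only field operations, such that all real roots of $p$ lie in $(-B, B)$ and $p(-B) < 0 < p(B)$; then choose small positive $t$ and consider the reciprocal/scaled polynomial whose evaluation on a fixed bounded box $(-1,1)$ reproduces the sign behaviour of $p$ on $(-B,B)$ — explicitly, study $q(Y) = t^{2k+1} p(Y/t)$ for $Y \in (-1,1)$, whose coefficients are $t, t^2 c_{2k}, \dots, t^{2k+1} c_0$ and whose evaluation map on $(-1,1)$ uses only products of elements of a bounded box once $t$ is small enough. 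The function $Y \mapsto q(Y)$ is then definable and continuous on $(-1,1)$ (finitely many bounded multiplications and additions), changes sign between $Y = -tB$ and $Y = tB$ (both inside $(-1,1)$ for small $t$), so by the intermediate value property it has a zero $Y_0$, and $X_0 = Y_0/t \in M$ is a root of $p$.

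The main obstacle, and the point requiring the most care, is organizing the rescaling uniformly: one must check that for a single sufficiently small $t$ all the finitely many products appearing in the evaluation of the scaled polynomial on the fixed bounded box stay inside a box on which multiplication is declared definable, and that the intermediate value property genuinely applies to these \emph{bounded}-domain definable continuous functions. Once the rescaling is set up correctly, everything else is the classical argument. I would also remark that $1 > 0$ and the field order agrees with $<$ on a bounded interval (hence everywhere, by translation and the bounded-multiplication axioms), so that "real closed" is meant with respect to the given order $<$; this compatibility is immediate from axiom~(1) in the definition of definable bounded multiplication.
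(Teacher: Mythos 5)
Your proposal is correct and follows essentially the same route as the paper: verify the square-root and odd-degree-root criteria for real closedness (the condition of \cite[Theorem 1.2.2(ii)]{BCR}) by applying the intermediate value property of definably complete structures to polynomial functions restricted to bounded intervals, which are definable by the bounded-multiplication hypothesis. The only difference is that your rescaling by $t$ is unnecessary --- the restriction of a polynomial to any bounded interval is already definable, since all the iterated products involved stay in a fixed bounded box --- but that is a presentational point, not a gap.
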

\begin{proof}
The restriction of a polynomial function to a bounded interval is definable because $\mathcal M$ has definable bounded multiplication.
Use this fact together with the intermediate value property of definably complete structures \cite{M} and \cite[Lemma 1.2.11]{BCR} which is valid for an ordered field.
We can easily check that the condition \cite[Theorem 1.2.2(ii)]{BCR} is satisfied.
\end{proof}

The following proposition and its corollary are trivial.
\begin{proposition}\label{prop:semialg}
Let $\mathcal M=(M,<,0,+,\ldots)$ be a definably complete expansion of an ordered abelian group having definable bounded multiplication.
Any bounded semialgebraic set is definable.
\end{proposition}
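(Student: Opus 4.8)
The plan is to reduce the statement to the case of a basic bounded semialgebraic set and then to localize, exploiting Proposition \ref{prop:rcf} together with the definable bounded multiplication. Recall that a semialgebraic subset of $M^n$ is a finite Boolean combination of sets of the form $\{x \in M^n \mid p(x) > 0\}$ and $\{x \in M^n \mid p(x) = 0\}$ for polynomials $p$ with coefficients in $M$; since $\mathcal M$ is a real closed field by Proposition \ref{prop:rcf}, this is the natural notion. The class of definable subsets of $M^n$ is closed under finite unions, finite intersections and complements, and a bounded set $X$ is a Boolean combination of finitely many sets of the above polynomial form intersected with a fixed bounded open box $B$ containing $X$. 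Hence it suffices to show that, for each polynomial $p$ and each bounded open box $B$, the sets $\{x \in B \mid p(x) > 0\}$ and $\{x \in B \mid p(x) = 0\}$ are definable; then Boolean combinations of these, all contained in $B$, are definable, and in particular $X$ is.

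The key step is therefore: for a bounded open box $B$ in $M^n$, the restriction $p|_B : B \to M$ of a polynomial function $p$ is definable in $\mathcal M$. This is essentially the fact already invoked in the proof of Proposition \ref{prop:rcf}; I would spell it out slightly. First I would take a bounded open interval $I$ large enough that $B \subseteq I^n$ and that all the partial products arising in evaluating $p$ on $I^n$ stay inside $I$; this is possible after rescaling, or one simply enlarges $I$ to absorb the finitely many coefficients and intermediate values, using that $\mathcal M$ is an expansion of an ordered group so that finite sums of bounded elements are bounded. Since a monomial $c\,x_1^{e_1}\cdots x_n^{e_n}$ is obtained from the coordinate functions by finitely many applications of the multiplication map, and each such application takes place between elements of a fixed bounded interval, the restriction of each monomial to $B$ is definable by clause (2) of the definition of definable bounded multiplication. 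Adding finitely many such restrictions, which is legitimate because $+$ is in the language and the partial sums remain bounded, shows $p|_B$ is definable. Consequently $\{x \in B \mid p(x) > 0\}$ and $\{x \in B \mid p(x) = 0\}$ are definable, being preimages of $(0,+\infty)$ and $\{0\}$ under a definable function with definable domain.

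Assembling these: given a bounded semialgebraic $X \subseteq M^n$, fix a bounded open box $B$ with $X \subseteq B$, write $X$ as a Boolean combination of finitely many sets $\{x \mid p_i(x) > 0\}$ and $\{x \mid q_j(x) = 0\}$, intersect the whole combination with $B$ (which does not change $X$), and replace each $\{x \mid p_i(x) > 0\}$ by $\{x \in B \mid p_i(x) > 0\}$ and similarly for the $q_j$; the resulting finite Boolean combination of definable subsets of $B$ equals $X$ and is definable. I do not anticipate a serious obstacle here; the only point requiring a little care is the bookkeeping that lets one keep every intermediate sum and product inside a single fixed bounded box so that clause (2) applies at every step, and this is handled by choosing $I$ large enough at the outset and noting that the number of arithmetic operations needed to evaluate a fixed polynomial is finite and independent of the point of $B$.
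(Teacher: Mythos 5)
Your proof is correct and is exactly the argument the paper has in mind: the paper's own proof of this proposition is simply ``Obvious,'' relying on the fact (already used in Proposition \ref{prop:rcf}) that restrictions of polynomial functions to bounded boxes are definable via definable bounded multiplication. Your careful bookkeeping of intermediate products and sums inside a fixed bounded interval is a faithful and complete expansion of that one-line claim.
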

\begin{proof}
Obvious.
\end{proof}

\begin{corollary}
Let $\mathcal M=(M,<,0,+,\ldots)$ be a definably complete expansion of an ordered abelian group having definable bounded multiplication.
The $n$-dimensional projective space $\mathbb P_n(M)$ and the Grassmannian $\mathbb G_{n,k}(M)$ are definable in $\mathcal M$. 
\end{corollary}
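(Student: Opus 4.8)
The plan is to realize both spaces as \emph{bounded} semialgebraic subsets of a Euclidean space $M^{N}$ and then to invoke Proposition \ref{prop:semialg}. By Proposition \ref{prop:rcf} the tuple $(M,<,0,1,+,\cdot)$ is a real closed field, so the classical semialgebraic presentations of projective spaces and Grassmannians over a real closed field are available to us; the only point requiring care is that the chosen presentation must have a bounded defining set, since the usual affine charts are unbounded and hence not directly covered by Proposition \ref{prop:semialg}.

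For $\mathbb P_n(M)$ I would use the model by orthogonal projection matrices. Send the line spanned by $v \in M^{n+1} \setminus \{0\}$ to the symmetric matrix $P_v = \langle v, v\rangle^{-1} v v^{T}$, where $\langle\,\cdot\,,\,\cdot\,\rangle$ is the standard inner product; this is well defined because $\langle v,v\rangle > 0$ in a real closed field. The image is exactly
\[
\{P \in M^{(n+1)\times(n+1)} \;\mid\; P^{T}=P,\ P^{2}=P,\ \operatorname{tr}(P)=1\},
\]
which is semialgebraic, and it is bounded: from $P = P^{2}$ and symmetry one gets $P_{ii} = \sum_{j} P_{ij}^{2}$, hence $0 \le P_{ii} \le 1$, and positive semidefiniteness of $P$ gives $P_{ij}^{2} \le P_{ii}P_{jj} \le 1$, so every entry lies in $[-1,1]$. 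One checks as over $\mathbb R$ that $v \mapsto P_v$ induces a bijection onto this set which is a homeomorphism for the respective topologies, so this bounded semialgebraic set represents $\mathbb P_n(M)$, and it is definable in $\mathcal M$ by Proposition \ref{prop:semialg}.

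The Grassmannian is treated identically: identify $\mathbb G_{n,k}(M)$ with the set of rank-$k$ orthogonal projections,
\[
\{P \in M^{n\times n} \;\mid\; P^{T}=P,\ P^{2}=P,\ \operatorname{tr}(P)=k\},
\]
via the map sending a $k$-dimensional subspace to the orthogonal projection onto it. The same entrywise estimate shows this is a bounded semialgebraic subset of $M^{n^{2}}$, whence it is definable by Proposition \ref{prop:semialg}.

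There is essentially no genuine obstacle here; the one thing one must not overlook is boundedness, which is precisely why I favour the projection-matrix model — all of whose defining data live in the cube $[-1,1]^{N}$ — over the standard affine-chart presentation, whose chart domains and transition maps are unbounded and therefore a priori outside the scope of Proposition \ref{prop:semialg}.
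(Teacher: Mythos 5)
Your proposal is correct and is essentially the paper's own argument: the paper simply cites \cite[Theorem 3.4.4, Proposition 3.4.11]{BCR}, which realize $\mathbb P_n(M)$ and $\mathbb G_{n,k}(M)$ as exactly the bounded algebraic sets of symmetric idempotent matrices you describe, and then applies Proposition \ref{prop:semialg}. You have merely spelled out the details of that construction, including the boundedness estimate, which is indeed the one point that matters here.
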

\begin{proof}
Because they are closed and bounded algebraic sets by \cite[Theorem 3.4.4, Proposition 3.4.11]{BCR}.
\end{proof}

The following lemma is used in the subsequent sections.

\begin{lemma}\label{lem:basic}
Consider an expansion $\mathcal M=(M,<,0,+,\ldots)$ of a densely linearly ordered group having definable bounded multiplication $\cdot$ compatible to $+$.
Consider a definable set $S$ and bounded definable functions $f,g:S \rightarrow M$ such that $f(x) \neq 0$ for all $x \in S$.
Then, the function $h:S \rightarrow M$ given by $h(x)=g(x)/f(x)$ is definable when $h$ is also bounded. 
\end{lemma}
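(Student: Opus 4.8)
The plan is to realise the graph of $h$ as a definable set by forming every product we need inside one fixed bounded open box, which is exactly what the boundedness hypotheses make possible. First I would choose a positive element $R \in M$ large enough that $|f(x)| < R$, $|g(x)| < R$ and $|h(x)| < R$ for every $x \in S$; such an $R$ exists because $f$, $g$ and $h$ are bounded and $M$ carries the ordered field structure $(M,<,0,1,+,\cdot)$ supplied by clause (1) of the definition of definable bounded multiplication, so $M$ has no largest element. Put $I=(-R,R)$. By clause (2) of that definition the restriction $\cdot|_{I\times I}:I\times I\to M$ is definable in $\mathcal M$; write $m$ for this definable function.

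Next I would verify the set equality
\[
\Gamma_h:=\{(x,h(x))\mid x\in S\}=\{(x,t)\in S\times M\mid t\in I\text{ and }m(t,f(x))=g(x)\}.
\]
For $\subseteq$: fix $x\in S$. Then $h(x)\in I$ by the choice of $R$, and since both $h(x)$ and $f(x)$ lie in $I$ we have $m(h(x),f(x))=h(x)\cdot f(x)=g(x)$, the last equality holding in the ordered field because $h(x)=g(x)/f(x)$. For $\supseteq$: if $t\in I$ and $m(t,f(x))=g(x)$, then $t\cdot f(x)=g(x)$ in the field, and since $f(x)\neq 0$ this forces $t=g(x)/f(x)=h(x)$.

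The right-hand side of the displayed equality is definable, being cut out of $S\times M$ by the definable functions $f$, $g$, $m$ and the definable interval $I$; hence $\Gamma_h$ is definable, and therefore so is $h$.

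I do not expect a genuine obstacle here: the only delicate point is that every product occurring in the argument — in particular $h(x)\cdot f(x)$ — must be formed inside the single box $I\times I$ on which multiplication is known to be definable, and the boundedness of $h$ (together with that of $f$ and $g$) is precisely what guarantees this. Dropping the assumption that $h$ is bounded would let $h(x)$ escape every bounded box as $f(x)$ approaches $0$, and the argument would collapse; so that hypothesis is used in an essential way.
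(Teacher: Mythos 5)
Your argument is correct and is exactly the routine verification the paper leaves implicit (its proof is just ``Immediate from the definition of definable bounded multiplication''): bound $f$, $g$, $h$ by a single $R$, use the definable restriction of $\cdot$ to $(-R,R)\times(-R,R)$, and cut out the graph of $h$ by the equation $t\cdot f(x)=g(x)$ with $t\in(-R,R)$, uniqueness following from $f(x)\neq 0$. Nothing further is needed.
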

\begin{proof}
Immediate from the definition of definable bounded multiplication.
\end{proof}

A natural question is whether a derivative of a definable function is again definable.
It is true in some restricted case, but false in general. 

\begin{lemma}\label{lem:c1}
Consider an expansion $\mathcal M=(\mathbb R,<,0,+,\ldots)$ of the ordered group of reals having definable bounded multiplication $\cdot$ compatible to $+$.
Let $I$ be a bounded closed interval and $f:I \rightarrow M$ be a definable $C^1$ function.
Then, its derivative $f':I \rightarrow M$ is also definable.
\end{lemma}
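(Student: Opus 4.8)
The plan is to express the derivative $f'$ pointwise as a limit of difference quotients and to argue that this limit is definable because, near any given point, the relevant difference quotients stay bounded and hence (by Lemma~\ref{lem:basic}) are definable. Concretely, I would first note that since $I$ is a bounded closed interval and $f$ is $C^1$ on $I$, both $f$ and $f'$ are bounded on $I$ (a continuous function on a definably compact set is bounded — here we use that $\mathbb{R}$ is the real line, so closed bounded intervals are genuinely compact, though one can also argue abstractly using definable completeness). Fix a small parameter $\delta>0$. For $x,y\in I$ with $0<|x-y|$ small, the difference quotient $(f(y)-f(x))/(y-x)$ is bounded: by the mean value theorem it equals $f'(\xi)$ for some $\xi$ between $x$ and $y$, and $f'$ is bounded on $I$. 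Therefore the function $(x,y)\mapsto (f(y)-f(x))/(y-x)$, restricted to the bounded definable set $\{(x,y)\in I\times I : 0<|y-x|\}$, is definable by Lemma~\ref{lem:basic}, since numerator, denominator and quotient are all bounded definable functions and the denominator is nonvanishing.

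Next I would recover $f'(x)$ from these difference quotients. The idea is that $f'(x)$ is the unique value $t$ such that for every $\varepsilon>0$ there is $\eta>0$ with $\left|\dfrac{f(y)-f(x)}{y-x}-t\right|<\varepsilon$ whenever $0<|y-x|<\eta$; this is a first-order condition on $(x,t)$ relative to the already-established definable difference-quotient function, so the graph $\{(x,t): t=f'(x)\}$ is definable. Hence $f'$ is definable as a function $I\to M$. One should be slightly careful that the $\varepsilon,\eta$ quantifiers range over $M$ and that boundedness of $f'$ guarantees the value $t$ lies in $M$ (not at $\pm\infty$), which is exactly why the boundedness of $I$ and the $C^1$ hypothesis were invoked at the outset.

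The main obstacle is the first step: justifying that the difference-quotient function is definable, because definable bounded multiplication only gives us the product (and hence the quotient) on bounded boxes. The key point making this work is that, although the numerator $f(y)-f(x)$ and denominator $y-x$ can be small, the quotient itself is controlled by the mean value theorem and the boundedness of $f'$, so all three quantities stay in a fixed bounded interval and Lemma~\ref{lem:basic} applies verbatim. A secondary subtlety is the use of the mean value theorem: since $(M,<,0,1,+,\cdot)$ is a real closed field by Proposition~\ref{prop:rcf} and $f$ is $C^1$, the classical argument (Rolle's theorem via definable completeness and the extreme value property) goes through, but for this particular lemma we are in $\mathbb{R}$ anyway, so the ordinary calculus mean value theorem suffices. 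Once definability of the difference quotients is in hand, the passage to $f'$ by the $\varepsilon$–$\eta$ formula is routine first-order manipulation.
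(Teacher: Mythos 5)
Your proposal is correct and follows essentially the same route as the paper: both establish definability of the difference-quotient function via boundedness (from continuity of $f'$ on the compact interval) and Lemma~\ref{lem:basic}, then recover $f'$ as the limit. The only cosmetic difference is that the paper obtains $f'$ by taking the closure of the graph of the difference-quotient function, whereas you write out the $\varepsilon$--$\eta$ first-order formula for the limit explicitly; these amount to the same thing.
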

\begin{proof}
Consider the function $h: \{(x,h) \in I \times M\;|\; h \neq 0, x+h \in I\} \rightarrow M$ given by $$h(x)=\dfrac{f(x+h)-f(x)}{h}\text{.}$$
Since $f'$ is continuous and $I$ is compact, $f'$ is bounded.
Lemma \ref{lem:basic} implies that the function $h$ is definable.
Therefore,  the derivative $f'$ is also definable because the closure of a definable set is again definable.
\end{proof}

\begin{example}
In Lemma \ref{lem:c1}, the assumption that $I$ is closed is inevitable.
Let $\widetilde{\mathbb R}=(\mathbb R, <, 0, 1,+ , \cdot, e^x)$ denote the expansion of the ordered fields of reals by the exponentiation. 
It is o-minimal by \cite{W}.
Let $\mathcal M(\widetilde{\mathbb R})$ be the locally o-minimal structure defined in Example \ref{ex:mult}.
The function $f:(0,1) \rightarrow \mathbb R$ given by $f(x)=x \log x$ is definable in $\mathcal M(\widetilde{\mathbb R})$ because it is a bounded function definable in $\widetilde{\mathbb R}$.
Its derivative $f'(x)=\log x+1$ is not definable in $\mathcal M(\widetilde{\mathbb R})$ by \cite[Lemma 5.3]{Fuji5}.
\end{example}

\section{Functional results for locally o-minimal structures having definable bounded multiplication}\label{sec:functional}
\subsection{{\L}ojasiewicz's inequality}
{\L}ojasiewicz's inequality and its variants are proved in many model-theoretic expansions of an ordered group. 
We prove that it holds true for definably complete expansion of an ordered group having definable bounded multiplication.
\begin{definition}\label{def:Phi}
We consider an expansion $\mathcal M=(M,<,0,+,\ldots)$ of a densely linearly ordered group having definable bounded multiplication $\cdot$ compatible to $+$.
The notation $\Phi_0(M)$ denotes the set of all odd strictly increasing definable bijection from $M$ onto $M$ fixing the origin.
It is a group with respect to composition.
We fix a subgroup $\Phi \subseteq \Phi_0(M)$ satisfying the following condition:
For any positive $a \in M$ and any definable continuous strictly increasing function $f:[0,a] \rightarrow M$ with $f(0)=0$, there are $\varphi, \psi \in \Phi$ such that $f(t) \leq \varphi(t)$ and $\psi(t) \leq f(t)$ for all sufficiently small $t>0$.

We call that $\mathcal M$ is \textit{polynomially bounded} if, for any positive $a \in M$ and any definable continuous strictly increasing function $f:[0,a] \rightarrow M$ with $f(0)=0$, there exists a positive integer $m \in \mathbb Z$ such that $f(t) \geq t^m$ for all sufficiently small $t>0$.
\end{definition}

\begin{example}
The group $\Phi=\Phi_0(M)$ satisfies the condition in Definition \ref{def:Phi}.
Let $a\in M$ be a positive element and $f:[0,a] \rightarrow M$ be a definable continuous strictly increasing function with $f(0)=0$.
The definable functions $\varphi, \psi \in \Phi$ defined below satisfy the inequalities in the definition.
\begin{align*}
\varphi(t)=\psi(t)=\left\{\begin{array}{ll} t-a+f(a) & \text{ if }t>a,\\ f(t) & \text{ if } 0 \leq t \leq a,\\ -f(-t) & \text{ if } -a \leq t<0 \text{ and }\\ t+a-f(a) & \text{ if }t < -a \text{.}\end{array}\right.
\end{align*}
\end{example}

\begin{example}
We constructed a locally o-minimal expansion $\mathcal M(\widetilde{\mathbb R})$ of the ordered group of reals from an o-minimal expansion $\widetilde{\mathbb R}$ of the ordered group of reals in Example \ref{ex:mult}.
When $\widetilde{\mathbb R}$ is a polynomially bounded expansion of the ordered field of reals, the locally o-minimal structure $\mathcal M(\widetilde{\mathbb R})$ is also polynomially bounded in the sense of Definition \ref{def:Phi}.
See \cite{hard} for the definition of a polynomially bounded expansion of the ordered field of reals.
\end{example}

\begin{lemma}\label{lem:polynomially_bounded}
Consider a definably complete expansion $\mathcal M=(M,<,0,+,\ldots)$ of a densely linearly ordered group having definable bounded multiplication $\cdot$ compatible to $+$.
Assume further that $\mathcal M$ is polynomially bounded.
For any $\varphi \in \Phi$ and any positive $a \in M$, there exist a positive $c \in M$ and a positive odd integer $m$ such that $\varphi(t) \geq ct^m$ for all $0 \leq t \leq a$. 
\end{lemma}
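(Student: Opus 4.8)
The plan is to reduce the estimate for an arbitrary $\varphi \in \Phi$ to the defining property of polynomial boundedness applied to $\varphi$ itself, restricted to the interval $[0,a]$. Since $\Phi \subseteq \Phi_0(M)$, the function $\varphi$ is an odd strictly increasing definable bijection of $M$ fixing $0$; in particular its restriction to $[0,a]$ is a definable continuous strictly increasing function with $\varphi(0)=0$, so it is exactly of the type to which the polynomial boundedness hypothesis applies. Hence there is a positive integer $m$ (which we may take odd, replacing $m$ by $m+1$ if necessary and noting $t^{m+1}\le t^m$ for $0\le t\le 1$) with $\varphi(t)\ge t^m$ for all sufficiently small $t>0$, say for $0 < t \le \varepsilon$ with $0<\varepsilon\le a$.

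The remaining task is to upgrade ``for all sufficiently small $t$'' to ``for all $0\le t\le a$'' at the cost of a constant $c$. First I would dispose of the trivial point $t=0$, where both sides vanish. On the compact interval $[\varepsilon,a]$ the definable continuous function $\varphi$ attains a positive minimum $\mu=\varphi(\varepsilon)>0$ by definable completeness (here we use that $\mathcal M$ is an expansion of the reals is \emph{not} assumed, so more precisely: $\varphi$ is increasing, hence $\varphi(t)\ge\varphi(\varepsilon)=\mu$ on $[\varepsilon,a]$, which avoids any extreme-value argument altogether). Meanwhile $t^m\le a^m$ on $[\varepsilon,a]$. So on $[\varepsilon,a]$ we have $\varphi(t)\ge \mu \ge (\mu/a^m)\, t^m$, provided $\mu/a^m$ is a legitimate element of $M$; since $\mathcal M$ is a real closed field by Proposition \ref{prop:rcf}, the quotient $\mu/a^m$ exists and is positive.

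Setting $c=\min\{1,\mu/a^m\}$, I claim $\varphi(t)\ge c\,t^m$ on all of $[0,a]$: on $[0,\varepsilon]$ we have $\varphi(t)\ge t^m\ge c\,t^m$ since $c\le 1$, and on $[\varepsilon,a]$ we have $\varphi(t)\ge (\mu/a^m)t^m\ge c\,t^m$. This $c$ is positive and $m$ is a positive odd integer, as required. The only mildly delicate point is the bookkeeping needed to make $m$ odd while preserving the inequality, and to confirm that the various quotients used to define $c$ live in $M$ — both are handled by the real closed field structure furnished by Proposition \ref{prop:rcf} and by restricting attention first to $t\le 1$; there is no substantive obstacle.
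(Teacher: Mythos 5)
Your proposal is correct and follows essentially the same route as the paper: apply polynomial boundedness near $0$, make $m$ odd after restricting to $t\le 1$, bound $\varphi$ from below on the remaining subinterval $[\varepsilon,a]$ by a positive constant, and take $c=\min\{1,\mu/a^m\}$. The only (harmless) difference is that you obtain the positive lower bound on $[\varepsilon,a]$ directly from monotonicity of $\varphi$, whereas the paper takes an infimum and cites Miller for its positivity.
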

\begin{proof}
By the assumption, there exist $0<b \leq a$ and a positive integer such that $\varphi(t) \geq t^m$ for all $0 \leq t < b$.
We may assume that $b<1$.
We may also assume that $m$ is odd by increasing $m$ if necessary.
Since $\varphi$ is positive, we have $s=\inf\{\varphi(t)\;|\; b \leq t \leq a\}>0$ by \cite[Corollary, p.1786]{M}.
Set $c=\min\{s/a^m,1\}$.
We obviously get the inequality $\varphi(t) \geq ct^m$ for all $0 \leq t \leq a$. 
\end{proof}

Consider a definably complete expansion $\mathcal M=(M,<,0,+,\ldots)$ of a densely linearly ordered group.
For any $x=(x_1,\ldots, x_n), y=(y_1,\ldots, y_n) \in M^n$, we set 
\[
d(x,y)=\max_{1 \leq i \leq n}|x_i-y_i|\text{.}
\]
The function $d$ is a distance function on $M^n$.
Let $A$ be a closed definable subset of $M^n$.
The distance function $d_A:M^n \rightarrow M$ is given by 
\[
d_A(x)=\inf\{d(x,a) \;|\; a \in A\}\text{.}
\]

\begin{proposition}[{\L}ojasiewicz's inequality]\label{prop:Lojasiewicz}
Consider a definably complete locally o-minimal expansion $\mathcal M=(M,<,0,+,\ldots)$ of an ordered group having definable bounded multiplication.
Let $S$ be a definable, closed and bounded subset of $M^n$.
Let $f,g:S \rightarrow M$ be definable continuous functions with $f^{-1}(0) \subseteq g^{-1}(0)$.
Then, we can choose a $\phi \in \Phi$ and a positive $N \in M$ such that $$|\phi(g(x))| \leq N|f(x)|$$ for all $x \in S$.

In addition, when $\mathcal M$ is polynomially bounded, there exists a positive integer $m$ such that $$|g(x)|^m \leq N|f(x)|$$ for all $x \in S$.
\end{proposition}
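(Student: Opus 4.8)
The plan is to reduce the Łojasiewicz inequality to a one-variable estimate obtained by composing $f$ and $g$ with a definable path that realizes the worst-case ratio. First I would dispose of the trivial region: on the closed definable set $\{x \in S \mid f(x) = 0\}$ we have $g(x) = 0$ by hypothesis, so both sides vanish and there is nothing to prove; the content is an estimate near the common zero set. I would then consider the definable function
\[
\theta(t) = \sup\{\, |g(x)| \;:\; x \in S,\ |f(x)| \leq t \,\}
\]
for $t \geq 0$. Since $S$ is definable, closed and bounded, hence definably compact, and $f,g$ are definable and continuous, the supremum is attained and $\theta$ is a well-defined, bounded, definable, nondecreasing function on $[0, M_0]$ where $M_0 = \sup_{x\in S}|f(x)|$ (finite by definable compactness). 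The key point is that $\theta(0) = 0$: if $|f(x)| \leq 0$ then $f(x) = 0$, so $g(x) = 0$; and one must also check that $\theta(t) \to 0$ as $t \to 0^+$, which follows from definable compactness by a standard argument (if not, there is a definable curve $x(t)$ with $|f(x(t))| \to 0$ but $|g(x(t))|$ bounded away from $0$; passing to a limit point $x^* \in S$ gives $f(x^*) = 0$ but $g(x^*) \neq 0$, a contradiction). Here I would invoke the monotonicity machinery (Theorem \ref{thm:monotonicity}) applied to $\theta$ on a suitable subinterval to ensure $\theta$ is, after shrinking, continuous and either constant or strictly increasing; the constant case forces $\theta \equiv 0$ near $0$ and the inequality is immediate with any $\phi$.

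In the strictly increasing case, I would replace $\theta$ by a definable continuous strictly increasing majorant $\widetilde\theta:[0,a]\to M$ with $\widetilde\theta(0)=0$ and $\widetilde\theta \geq \theta$ on $[0,a]$ for some small positive $a$; concretely one can take $\widetilde\theta(t) = \theta(t) + \tfrac{1}{a}t\cdot(\text{bound})$ using definable bounded multiplication (all quantities here are bounded, so the products and quotients needed are legitimate by Lemma \ref{lem:basic}). By the defining property of the fixed subgroup $\Phi \subseteq \Phi_0(M)$ in Definition \ref{def:Phi}, there is $\varphi \in \Phi$ with $\widetilde\theta(t) \leq \varphi(t)$ for all sufficiently small $t > 0$, say for $0 \leq t \leq b \leq a$. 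Unwinding the definition of $\theta$: for every $x \in S$ with $|f(x)| \leq b$ we get $|g(x)| \leq \theta(|f(x)|) \leq \widetilde\theta(|f(x)|) \leq \varphi(|f(x)|)$. Now I take $\phi = \varphi^{-1} \in \Phi$ (inverses exist in $\Phi_0(M)$, and $\Phi$ is a subgroup, so $\phi \in \Phi$), which is odd and strictly increasing; applying $\phi$ and using that $\phi$ is increasing yields $\phi(|g(x)|) \leq |f(x)|$, hence $|\phi(g(x))| \leq |f(x)|$ using oddness of $\phi$, for all $x$ with $|f(x)| \leq b$.

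It remains to handle the complementary region $\{x \in S \mid |f(x)| \geq b\}$, which is definable, closed and bounded, hence definably compact; on it $|f|$ attains a positive minimum $\mu \geq b$ and $|\phi(g(x))|$ is bounded above by some $K \in M$ (again definable compactness plus continuity, using \cite[Corollary, p.1786]{M}). Choosing $N = \max\{1, K/\mu\}$ — a legitimate element since the quotient is of bounded quantities (Lemma \ref{lem:basic}) — gives $|\phi(g(x))| \leq N|f(x)|$ on that region, and $N \geq 1$ preserves the estimate on the first region. This proves the first assertion. For the polynomially bounded refinement, I would instead invoke Lemma \ref{lem:polynomially_bounded}: applied to $\varphi$ and $a = b$ it gives a positive $c \in M$ and a positive odd integer $m$ with $\varphi(t) \geq c\,t^m$ for $0 \leq t \leq b$, whence $|g(x)| \leq \varphi(|f(x)|)$ combined with... — wait, the inequality goes the wrong way, so instead I would apply the polynomially bounded hypothesis directly to the inverse function $\widetilde\theta^{-1}$ (continuous strictly increasing, vanishing at $0$) to get $\widetilde\theta^{-1}(s) \geq s^m$ for small $s$, i.e.\ $\widetilde\theta(t) \leq t^{1/m}$, equivalently $\theta(t)^m \leq t$ for small $t$; this yields $|g(x)|^m \leq |f(x)|$ on $\{|f(x)| \leq b\}$, and the compact complementary region is absorbed into $N$ exactly as before (using $|g|^m$ bounded, $|f| \geq b$). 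The main obstacle I anticipate is the rigorous verification that $\theta(t) \to 0$ as $t \to 0^+$ and that $\theta$ can be replaced by a strictly increasing continuous majorant vanishing at the origin: this is where definable compactness of $S$ and the strong monotonicity theorem do the real work, and where one must be careful that every multiplication, division, and power used stays within a bounded box so that definable bounded multiplication applies.
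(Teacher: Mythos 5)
Your proposal is correct, and it takes a genuinely different (if closely related) route from the paper's. The paper fixes $Z=f^{-1}(0)$, parametrizes by the distance function $d_Z$, and introduces two one-variable functions $\eta(t)=\inf\{|f(x)|: d_Z(x)=t\}$ and $\rho(t)=\sup\{|g(x)|: d_Z(x)=t\}$; it then chooses $\phi_1,\phi_2\in\Phi$ with $\phi_1\leq\eta$ and $\rho\leq\phi_2$ near $0$ and sets $\phi=\phi_1\circ\phi_2^{-1}$. You instead parametrize by sublevel sets of $|f|$ via $\theta(t)=\sup\{|g(x)|:|f(x)|\leq t\}$, which is automatically nondecreasing, requires only one application of Definition \ref{def:Phi}, lets you take $\phi=\varphi^{-1}$ (legitimate since $\Phi$ is a subgroup), and spares you the paper's step of checking that $[0,u)\subseteq d_Z(S)$. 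The underlying ingredients coincide: a definable-compactness argument for $\theta(0^+)=0$ (the paper uses uniform continuity of $|f|$ for the analogous continuity of $\eta$ at $0$); Theorem \ref{thm:monotonicity} together with local o-minimality at $0$ to make the auxiliary function continuous and strictly monotone on an interval with left endpoint $0$ (here you should note explicitly that, since $X_d$ is discrete and closed, one of $X_c,X_+$ contains an interval of the form $(0,\delta)$, the constant case forcing $\theta\equiv 0$ there); and Lemma \ref{lem:basic} plus the max-min principle on the definably compact complementary region $\{|f|\geq b\}$. For the polynomially bounded refinement you apply the hypothesis to $\widetilde\theta^{-1}$ to obtain $\theta(t)^m\leq t$, whereas the paper applies Lemma \ref{lem:polynomially_bounded} to the already constructed $\phi$ to get $\phi(t)\geq ct^m$; both yield $|g|^m\leq N|f|$. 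The informal limiting curve in your verification that $\theta(0^+)=0$ should be replaced by the definable filtered collection $\{x\in S: |f(x)|\leq t,\ |g(x)|\geq\varepsilon\}$, but this is bookkeeping rather than a gap.
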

\begin{proof}
Set $Z=f^{-1}(0)$.
The notation $d_Z$ denotes the distance function to $Z$ in $M^n$.
When $s=\inf\{d_Z(x)\;|\;x \in S \setminus Z\}>0$, the definable set $S \setminus Z$ is closed.
By the max-min principle \cite{M}, the image of a bounded closed definable set under a definable continuous function is  closed and bounded.
We use this fact without mention.
There exists $c>0$ with $c \leq |f(x)|$ for all $x \in S \setminus Z$.
The definable functions $f(x)$ and $g(x)$ are bounded.
The restriction of $g(x)/f(x)$ to $S \setminus Z$ is bounded.
It is also definable by Lemma \ref{lem:basic}.
We can take $N>0$ such that $|g(x)/f(x)| \leq N$.
We get $|g(x)| \leq N|f(x)|$ for all $x \in S \setminus Z$.
The inequality $|g(x)| \leq N|f(x)|$ is clear when $x \in Z$.
We have proved the proposition when $s>0$.
We can prove the inequality in the same manner when $Z$ is an empty set.

We consider the case in which $Z \neq \emptyset$ and $s=0$.
We can take $u>0$ such that $[0,u) \subseteq d_Z(S)$ by local o-minimality.
A definable continuous function on a definable, bounded and closed set is uniformly continuous by \cite[Corollary 4.3, Remark 4.13]{Fuji5}.
We also use this fact. 
Consider the function $\eta:[0,u) \rightarrow M$ defined by 
$$\eta(t)=\inf\{|f(x)|\;|\; x \in S,\ d_Z(x)=t\}\text{.}$$
When $0<t<u$, we have $\eta(t)>0$ by the max-min principle.
By uniform continuity of $|f(x)|$, for any $\varepsilon>0$, we can choose $\delta>0$ such that $|f(x)| < \varepsilon$ for all $d_Z(x)<\delta$.
It means that $\eta$ is continuous at $0$.
We may assume that, by taking a smaller $u$ if necessary, $\eta$ is continuous by the strong local monotonicity property.
Consider the function $\rho:[0,u) \rightarrow M$ defined by 
$$\rho(t)=\sup\{|g(x)|\;|\; x \in S,\ d_Z(x)=t\}\text{.}$$
We may assume that $\rho$ is continuous for the same reason as above.

By Definition \ref{def:Phi}, there exist $\phi_1,\phi_2 \in \Phi$ such that 
\begin{align}
& \phi_1(t) \leq \eta(t) \label{eq:1} \text{ and }\\
&\rho(t) \leq \phi_2(t) \label{eq:2}
\end{align}
for all sufficiently small $t \geq 0$.
We may assume that the inequalities (\ref{eq:1}) and (\ref{eq:2}) hold true for all $0 \leq t <u$ by taking a smaller $u>0$ again.
Set $T=\{x \in S\;|\; d_Z(x) < u\}$ and $\phi=\phi_1 \circ \phi_2^{-1}$.
Fix an arbitrary $x \in T$.
Set $t=d_Z(x)$.
We have $t<u$.
Using the fact $\phi$ is strictly increasing odd continuous function, we get
\begin{align}
|\phi(g(x))| & \leq \sup\{ |\phi(g(y))| \;|\; y \in S,\ d_Z(y)=t\}\label{eq:3}\\
&=\phi(\sup\{ |g(y)| \;|\; y \in S,\ d_Z(y)=t\})\nonumber\\
&=\phi(\rho(t)) \leq \phi(\phi_2(t))=\phi_1(t) \leq \eta(t)\nonumber\\
& \leq |f(x)|\nonumber
\end{align}

We consider the case in which $x \in S \setminus T$.
The function $|\phi(g(x))/f(x)|$ is bounded on $S \setminus T$ for the same reason as the case in which $s=0$.
We can take $N>1$ such that $|\phi(g(x))| \leq N|f(x)|$ for all $x \in S \setminus T$.
This inequality and the inequality (\ref{eq:3}) deduces the inequality $|\phi(g(x))| \leq N|f(x)|$.

We finally consider the case in which $\mathcal M$ is polynomially bounded.
By the max-min principle, there exists $a>0$ such that $|g(x)|<a$ for all $x \in S$.
We can take a positive odd integer $m$ and a positive constant $c \in M$ such that $\phi(t) \geq ct^m$ for all $0 \leq t \leq a$ by Lemma \ref{lem:polynomially_bounded}. 
We therefore have $|\phi(g(x))| \geq c|g(x)|^m$.
We finally get $|g(x)|^m \leq (N/c)|f(x)|$.
\end{proof}

\begin{proposition}\label{prop:zero}
Let $\mathcal M$, $S$, $f$ and $g$ be the same as in Proposition \ref{prop:Lojasiewicz}.
There exist $\phi \in \Phi$ and a definable continuous function $h:S \rightarrow M$ such that $\phi(g(x))=h(x)f(x)$ for all $x \in S$.

In addition, when $\mathcal M$ is polynomially bounded, there exists a positive integer $m$ such that $(g(x))^m=h(x)f(x)$.
\end{proposition}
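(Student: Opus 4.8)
The plan is to obtain this as a fairly direct consequence of Proposition \ref{prop:Lojasiewicz}, the point being to choose the data so that the quotient one wants to call $h$ extends continuously across the zero set $Z := f^{-1}(0)$. The naive attempt --- apply {\L}ojasiewicz to $f$ and $g$ to get $\phi_1 \in \Phi$, $N>0$ with $|\phi_1(g)| \le N|f|$, then set $h = \phi_1(g)/f$ off $Z$ and $0$ on $Z$ --- fails, because this only shows $h$ bounded near $Z$, not that its limit along $Z$ is $0$ (already for $f(x)=g(x)=x$ one would get $h \equiv 1$ off the origin). The fix is to apply {\L}ojasiewicz to $f^2$ instead of $f$.

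Concretely, first I would note that $f$ is bounded (being continuous on the closed bounded definable set $S$), so $\tilde f := f^2$ is a bounded definable continuous function with $\tilde f^{-1}(0) = Z \subseteq g^{-1}(0)$, using definable bounded multiplication together with the continuity of the field multiplication. Applying Proposition \ref{prop:Lojasiewicz} to the pair $(\tilde f, g)$ then yields $\phi \in \Phi$ and a positive $N \in M$ with $|\phi(g(x))| \le N|f(x)|^2$ for all $x \in S$, and, in the polynomially bounded case, a positive integer $m$ with $|g(x)|^m \le N|f(x)|^2$.

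Next I would define $h(x) = \phi(g(x))/f(x)$ on $S \setminus Z$ and $h(x)=0$ on $Z$ (respectively with $g(x)^m$ in the numerator in the polynomially bounded case). On $S \setminus Z$ this is a quotient of bounded definable functions whose denominator does not vanish, and the estimate $|h(x)| \le N|f(x)|$ shows $h$ is bounded there, so $h|_{S \setminus Z}$ is definable by Lemma \ref{lem:basic}; glueing with the zero function on $Z$ keeps $h$ definable. The required identity holds on $S \setminus Z$ by construction and on $Z$ because both sides vanish there ($g$, hence $\phi \circ g$, vanishes on $Z$, and $f$ vanishes on $Z$).

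Finally, and this is the only substantive point, I would verify continuity of $h$: it is automatic on the open set $S \setminus Z$, and at a point $z \in Z$ one uses that the bound $|h(x)| \le N|f(x)|$ now holds for \emph{all} $x \in S$, so the continuity of $f$ and $f(z)=0$ force $h(x) \to 0 = h(z)$ by a straightforward neighbourhood argument. I expect the only real obstacle to be recognizing that squaring $f$ is precisely what upgrades the {\L}ojasiewicz bound from "bounded quotient" to "quotient with a continuous extension"; everything else is routine bookkeeping once Proposition \ref{prop:Lojasiewicz} and Lemma \ref{lem:basic} are in hand.
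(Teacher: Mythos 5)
Your proof is correct, and it follows the same skeleton as the paper's: define $h$ as the quotient off $Z=f^{-1}(0)$ and as $0$ on $Z$, get definability from Lemma \ref{lem:basic} via boundedness of the quotient, and obtain continuity across $Z$ from a Łojasiewicz estimate that has been strengthened by a quadratic factor. The difference is where that quadratic factor is inserted. The paper applies Proposition \ref{prop:Lojasiewicz} to the original pair $(f,g)$, obtaining $\psi\in\Phi$ with $|\psi(g)|\le N|f|$, and then invokes the defining property of $\Phi$ (Definition \ref{def:Phi}) a second time, applied to the restriction of $t^2\psi(t)$ to $[0,a]$, to produce $\phi\in\Phi$ with $\phi(t)\le t^2\psi(t)$ for small $t\ge 0$; this gives $|h|\le N g^2$, and continuity at points of $Z$ follows because $g$ is continuous and vanishes on $Z$. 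You instead square the denominator, applying Proposition \ref{prop:Lojasiewicz} to $(f^2,g)$ (legitimate, since $f^2$ is definable by bounded multiplication, continuous, and has the same zero set), which gives $|h|\le N|f|$ and continuity at $Z$ from the vanishing of $f$ there. Both routes are sound; yours saves one appeal to Definition \ref{def:Phi} and delivers the $\phi$ of the conclusion directly from the inequality, while the paper's keeps the auxiliary bound in the form $|h|\le Ng^2$. In the polynomially bounded case both arguments come down to a bound of the shape $|g|^m\le Nf^2$ with $h=g^m/f$, so they agree there up to bookkeeping.
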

\begin{proof}
There exist a $\psi \in \Phi$ and a positive $N \in M$ such that 
\begin{equation}
|\psi(g(x))| \leq N|f(x)|\label{eq:11}
\end{equation}
for all $x \in S$ by Proposition \ref{prop:Lojasiewicz}.
Since $g$ is bounded, we may assume that $|g(S)|<a$ for some positive $a$.
The restriction of $t^2\psi(t)$ to $[0,a]$ is definable in $\mathcal M$.
Apply Definition \ref{def:Phi} to it.
We can choose a $\phi \in \Phi$ such that $\phi(t) \leq t^2\psi(t)$ for all sufficiently small $t \geq 0$.
We may assume that $\phi(t)=t^m$ for some positive odd integer $m$ by the definition when $\mathcal M$ is polynomially bounded.

Set $Z=f^{-1}(0)$.
Define a function $h:S \rightarrow M$ by
$$
h(x) = \left\{\begin{array}{ll} \phi(g(x))/f(x) & \text{ when } x \not\in Z\\ 0 & \text{ otherwise.}\end{array}\right.
$$
We have only to demonstrate that $h$ is definable and continuous.
By the inequality (\ref{eq:11}), we have 
\begin{equation}
|\phi(g(x))/f(x)| \leq Ng(x)^2 \label{eq:12}
\end{equation}
for all $x \not\in Z$.
In particular, the function $\phi(g(x))/f(x)$ on $S \setminus Z$ is bounded.
Therefore, it is definable by Lemma \ref{lem:basic}.
We have demonstrated that $h$ is definable.
The continuity of $h$ is also obvious from the inequality (\ref{eq:12}) because $g$ is continuous and identically zero on $Z$.
\end{proof}

\subsection{Weak Tietze extension theorem}

Mimicing the proof of \cite[Proposition 2.6.9]{BCR}, we get a weak Tietze extension theorem.
\begin{theorem}[Weak Tietze extension theorem]\label{thm:weak_tietze}
Consider a definably complete locally o-minimal expansion $\mathcal M=(M,<,0,+,\ldots)$ of an ordered group having definable bounded multiplication.
Let $S$ be a definable, closed and bounded subset of $M^n$.
A definable continuous function $f:S \rightarrow M$ has its definable continuous extension $\overline{f}:M^n \rightarrow M$.
\end{theorem}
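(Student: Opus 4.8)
The plan is to mimic the classical o-minimal/semialgebraic proof (as in \cite[Proposition 2.6.9]{BCR}), reducing everything to the results already available: {\L}ojasiewicz's inequality in the form of Proposition \ref{prop:zero}, the definable bounded multiplication axiom, and closure under taking closures of definable sets. First I would reduce to extending a \emph{bounded} continuous function: since $S$ is closed and bounded, the max-min principle \cite{M} gives $|f|\le R$ on $S$ for some positive $R\in M$, so $f$ takes values in a bounded interval; composing with a definable bounded homeomorphism $M\to(-R,R)$ (built from $+$ and bounded multiplication) is not quite available in full generality, so instead I would work directly with the bounded $f$ and only at the very end patch up the values outside $S$, where nothing constrains them.

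The core construction: set $Z=f^{-1}(0)$ and consider the two disjoint closed definable sets obtained by splitting $f$ into positive and negative parts. More efficiently, following \cite{BCR}, write $f=f_+-f_-$ with $f_\pm=\max(\pm f,0)$ definable continuous and nonnegative on $S$, and handle each separately; so assume $f\ge 0$ on $S$. Now let $d_S:M^n\to M$ be the distance function to $S$ (well-defined and definable continuous since $S$ is closed and bounded). On the closed definable set $T=\{x\in M^n : d_S(x)\le 1\}$, which is bounded, I would apply Proposition \ref{prop:zero} to a suitable pair of functions: the idea is that $f$, regarded as defined on $S$, and $d_S$ together force the extension. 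Concretely, the classical trick is to define, for $x\notin S$,
\[
\overline f(x)=\inf_{a\in S}\left(f(a)+\frac{d(x,a)^2}{d_S(x)}\right),
\]
but division is the obstacle: this quotient must be shown definable, which is exactly where definable bounded multiplication enters via Lemma \ref{lem:basic} — one needs the quotient to be bounded on bounded pieces, which holds because $d(x,a)^2/d_S(x)$ is controlled near $S$ and the infimum is attained on the compact $S$. I would verify boundedness by restricting to $\{x : \varepsilon\le d_S(x)\le 1\}$ for each small $\varepsilon$, where the denominator is bounded below, then take closures.

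The continuity of $\overline f$ at points of $S$ — i.e., that $\overline f(x)\to f(a)$ as $x\to a\in S$ — is the main obstacle and the place where {\L}ojasiewicz is genuinely needed rather than just field arithmetic. Here I would invoke Proposition \ref{prop:zero} (or Proposition \ref{prop:Lojasiewicz}) with $g=d_S$ restricted to $T$ and $f$ the modulus of continuity data of the original function: uniform continuity of $f$ on $S$ (available by \cite[Corollary 4.3, Remark 4.13]{Fuji5}) produces a definable continuous increasing $\omega:[0,u)\to M$ with $\omega(0)=0$ controlling $|f(a)-f(a')|$ by $\omega(d(a,a'))$, and Definition \ref{def:Phi} bounds $\omega$ above by some $\varphi\in\Phi$; then the {\L}ojasiewicz inequality converts the estimate $|\overline f(x)-f(\pi_S(x))|\lesssim \varphi(d_S(x))$ into genuine continuity as $d_S(x)\to 0$. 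Once $\overline f$ is defined, definable and continuous on $T$ with $\overline f|_S=f$, I would extend it to all of $M^n$ by composing the "clamp" $x\mapsto\min(d_S(x),1)$ with a definable continuous cutoff so that $\overline f$ is eventually constant (say $0$) far from $S$; this final patching uses only $+$, $\min$, and bounded multiplication in a bounded region, hence stays definable, and continuity is automatic since it agrees with the already-constructed function on the overlap and is locally constant outside.
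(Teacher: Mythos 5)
Your overall architecture (reduce to $f\ge 0$, build an inf-convolution over the compact set $S$, patch outside a neighborhood) is in the right family, but the specific extension formula you chose creates a definability gap that your proposed fix does not close. The quotient $d(x,a)^2/d_S(x)$ is \emph{unbounded} on $(T\setminus S)\times S$: for $x\to S$ and $a\in S$ at distance bounded below from the limit point, the numerator stays bounded away from $0$ while the denominator tends to $0$. Hence Lemma \ref{lem:basic} does not apply to it, and your remedy --- restricting to $\{\varepsilon\le d_S(x)\le 1\}$ for each small $\varepsilon$ and then ``taking closures'' --- fails for two reasons. First, the graph of the quotient on the full domain is the increasing union over $\varepsilon>0$ of the $\varepsilon$-restricted graphs, and such a union need not be definable; the bound on the quotient blows up as $\varepsilon\to 0$, so each slice invokes a different instance of bounded multiplication and there is no single first-order formula uniform in $\varepsilon$. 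Second, taking the topological closure of an $\varepsilon$-restricted graph does not recover the graph on $\{d_S<\varepsilon\}$. This is not a pedantic point: in Example \ref{ex:mult} the full graph of division is genuinely undefinable (otherwise full multiplication would be definable and local o-minimality would fail), so an unbounded quotient is exactly the kind of object these structures forbid. The gap is repairable --- one can encode ``$u\ge f(a)+d(x,a)^2/d_S(x)$'' as ``$(u-f(a))\cdot d_S(x)\ge d(x,a)^2$'' with $u$ confined to a bounded interval containing the (bounded) infimum, which only uses multiplication of bounded quantities --- but you did not do this, and the paper avoids the issue entirely by using a division-free formula: it first applies Proposition \ref{prop:Lojasiewicz} to $f(x)-f(y)$ versus $d(x,y)$ on $S\times S$ to get $|\phi(f(x)-f(y))|\le Nd(x,y)$, and then sets $\overline f(x)=\inf_{y\in S}\bigl(\phi^{-1}(Nd(x,y))+d(x,y)+f(y)\bigr)$, which is definable outright and is already a global extension.

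Two secondary problems. Your account of where {\L}ojasiewicz enters is misplaced: with the $d(x,a)^2/d_S(x)$ penalty, the matching $\overline f(x)\to f(a_0)$ as $x\to a_0\in S$ follows from uniform continuity of $f$ and positivity of the penalty alone, whereas in the paper {\L}ojasiewicz is used \emph{up front} to produce the H\"older-type modulus that makes the inf-convolution restrict to $f$ on $S$; your sketch of converting a modulus of continuity through Proposition \ref{prop:zero} does not describe a concrete estimate. Finally, you never address continuity of the infimum away from $S$ (equivalently, lower semicontinuity of $\overline f$); the paper needs a genuine argument here, namely that the set $K=\{(x,y):\Delta(x,y)+f(y)\le b-\varepsilon\}$ is closed and bounded so that its projection is closed by \cite[Lemma 1.7]{M}. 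Some version of that compactness step would be required for your formula as well.
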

\begin{proof}
Set $f^+=\frac{1}{2}(f+|f|)$ and $f^-=\frac{1}{2}(|f|-f)$.
We have $f=f^+-f^-$, $f^- \geq 0$ and $f^+ \geq 0$ on $S$.
Hence, we may assume that $f \geq 0$ without loss of generality.

Apply Proposition \ref{prop:Lojasiewicz} to $f(x)-f(y)$ and the restriction of the distance function $d$ to $S \times S$.
There exists a $\phi \in \Phi$ and a positive $N \in M$ such that
$$|\phi(f(x)-f(y))| \leq Nd(x,y)$$
for all $x,y \in S$.
Consider the definable functions $\Delta:M^n \times M^n \rightarrow M$ and $\overline{f}:M^n \rightarrow M$ defined by
\begin{align*}
&\Delta(x,y) = \phi^{-1}(Nd(x,y))+d(x,y) \text{ and }\\
&\overline{f}(x)=\inf \{\Delta(x,y)+f(y)\;|\;y \in S\} \text{.}
\end{align*}
The function $\overline{f}$ is well-defined because $\mathcal M$ is definably complete and $\Delta$ is continuous.
We show that $\overline{f}$ is a definable continuous extension of $f$.

We first demonstrate that $\overline{f}(x)=f(x)$ for all $x \in S$.
Let $y$ be an arbitrary element in $S$.
We get
\begin{align*}
|f(x)-f(y)| &= \phi^{-1} \circ \phi(|f(x)-f(y)|) = \phi^{-1}(|\phi(f(x)-f(y))|)\\
&\leq \phi^{-1}(Nd(x,y)) \leq \Delta(x,y)\text{.}
\end{align*} 
It implies that $f(x) \leq f(y)+\Delta(x,y)$.
We get $\overline{f}(x)=f(x)$ by the definition of $\overline{f}$.

The next task is to prove that $\overline{f}$ is continuous.
Take an arbitrary element $a \in M^n$ and set $b=\overline{f}(a)$.
Fix an arbitrary positive element $\varepsilon \in M$.
We demonstrate that there exists $\delta>0$ such that $b-\varepsilon < \overline{f}(x) < b+ \varepsilon$ whenever $d(x,a)<\delta$.

We obtain $b=\inf\{\Delta(a,y)+f(y)\;|\;y \in S\}$.
By the max-min principle, we get $b=\Delta(a,y')+f(y')$ for some $y' \in S$.
Since the function $\Delta$ is continuous, we can take $\eta>0$ so that $|\Delta(x,y')+f(y')-b|=|\Delta(x,y')-\Delta(a,y')|<\varepsilon$ whenever $d(x,a) < \eta$.
It implies that $\overline{f}(x)<b+\varepsilon$.

We next demonstrate that we can take $\mu>0$ so that $\overline{f}(x) > b- \varepsilon$ whenever $d(x,a)<\mu$.
Once this claim is proved, the positive element $\delta=\min\{\eta,\mu\}$ satisfies the required condition and the assertion that $\overline{f}$ is continuous is demonstrated.
Assume for the contradiction that, for any sufficiently small $\mu>0$, there exist $x \in M^n$, $y \in S$ such that $d(x,a)<\mu$ and $\Delta(x,y)+f(y) \leq b-\varepsilon$.
Consider the definable closed set
$$K=\{(x,y) \in M^n \times S\;|\; \Delta(x,y)+f(y) \leq b - \varepsilon\}\text{.}$$
It is contained in the bounded set $\{(x,y)\in M^n \times C\;|\; d(x,y) \leq b -\varepsilon\}$ because of the inequalities $f \geq 0$ and $d(x,y) \leq \Delta(x,y)$.
In particular, it is bounded.
Let $\pi:M^n \times M^n \rightarrow M^n$ be the coordinate projection onto the first $n$ coordinates.
The projection image $\pi(K)$ is closed, bounded and definable by \cite[Lemma 1.7]{M}.
On the other hand, the assumption implies that the point $a$ is contained in the closure of $\pi(K)$.
Therefore, we get $a \in \pi(K)$.
It means that $\Delta(a,y)+f(y) \leq b-\varepsilon$ for some $y \in C$, which contradicts to the assumption that $\overline{f}(a)=b$.
We have finished the proof.
\end{proof}

\begin{remark}
An expansion $\mathcal M=(M,<,\ldots)$ of a dense linear order without endpoints enjoys  \textit{definable Tietze extension property} if, for any positive integer $n$, any definable closed subset $A$ of $M^n$ and any continuous definable function $f:A \rightarrow M$, there exists a definable continuous extension $F:M^n \rightarrow M$ of $f$.

An archimedean definably complete uniformly locally o-minimal expansion $\mathcal M$ of the second kind of an ordered group is o-minimal when $\mathcal M$ enjoys definable Tietze extension property \cite[Theorem 5.5]{Fuji5}.
We constructed a definably complete expansion of the ordered group of reals having definable bounded multiplication compatible to the addition in Example \ref{ex:mult}.
Theorem \ref{thm:weak_tietze} is valid for it, but it does not enjoy definable Tietze extension property.
\end{remark}

\subsection{Michael's selection theorem}

Michael's selection theorem is popular in variational analysis \cite{AF, RW}. 
A similar argument to \cite{Th} yields a weak Michael's selection theorem for definable set-valued maps.
We first recall several definitions.

\begin{definition}
For sets $X$ and $Y$, we denote a map $T$ from $X$ to the power set of $Y$ by $T: X \rightrightarrows Y$ and call it a \textit{set-valued map}.
When $X$ and $Y$ are topological spaces, a \textit{continuous selection} of a set-valued map $T:X \rightrightarrows Y$ is a continuous function $f:X \rightarrow Y$ such that $f(x) \in T(x)$ for all $x \in X$.  

Consider an expansion $\mathcal M=(M,<,\ldots)$ of a dense linear order without endpoints.
Let $E$ be a definable subset of $M^n$.
We consider a \text{definable set-valued map} $T:E \rightrightarrows M^m$; that is, the \textit{graph} $\Gamma(T):=\bigcup_{x \in E}\{x\} \times T(x) \subseteq E \times M^m$ is definable.
We define a \textit{definable continuous selection} of $T$, similarly.

A set valued map $T:E \rightrightarrows M^m$ is \textit{lower semi-continuous} if, for any $x_0 \in E$, $y_0 \in T(x_0)$ and a neighborhood $V$ of $y_0$, there exists a neighborhood $U$ of $x_0$ such that $T(x) \cap V \neq \emptyset$ for all $x \in U$.
A lower semi-continuous set-valued map $T:E \rightrightarrows M^m$ is \textit{continuous} if its graph $\Gamma(T)$ is closed in $E \times M^m$.
\end{definition}

We prove the weak Michael's selection theorem.
\begin{theorem}[Weak Michael's selection theorem]\label{thm:michael}
Consider a definably complete locally o-minimal expansion $\mathcal M=(M,<,0,+,\ldots)$ of an ordered group having definable bounded multiplication.
Let $E$ be a closed, bounded and definable subset of $M^n$ and $T:E \rightrightarrows M^m$ be a definable lower semi-continuous set-valued map such that $T(x)$ are closed and convex for all $x \in E$.
The set-valued map $T$ has a definable continuous selection.
\end{theorem}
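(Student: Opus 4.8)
The plan is to carry out the classical successive–approximation proof of Michael's theorem inside the definable category, with {\L}ojasiewicz's inequality (Proposition~\ref{prop:Lojasiewicz}) and the Tietze extension theorem (Theorem~\ref{thm:weak_tietze}) supplying the quantitative input that is normally provided by the completeness of a Banach space. First I would make two reductions. We may assume $T(x)\neq\emptyset$ for every $x\in E$, since otherwise no selection exists. The definable function $x\mapsto\inf\{|y|:y\in T(x)\}$ is upper semi-continuous by lower semi-continuity of $T$, hence bounded above on the definably compact set $E$ by some $R\in M$ (a definable decreasing family of nonempty closed bounded sets has nonempty intersection, by definable completeness). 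Then $x\mapsto T(x)\cap[-R-1,R+1]^m$ still has nonempty closed bounded convex values and is still lower semi-continuous: intersecting a lower semi-continuous set-valued map with the fixed open box $(-R-1,R+1)^m$ preserves lower semi-continuity, passing to closures preserves it, and for the closed convex set $T(x)$ one has $\overline{T(x)\cap(-R-1,R+1)^m}=T(x)\cap[-R-1,R+1]^m$. Thus we may assume $\Gamma(T)\subseteq E\times P$ for a fixed closed box $P\subseteq M^m$.

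The core is an approximate–selection lemma: for every $\varepsilon>0$ there is a definable continuous $f:E\to P$ with $d(f(x),T(x))<\varepsilon$ for all $x\in E$. For each $(x,y)\in\Gamma(T)$, lower semi-continuity yields an open box around $x$ — taken canonically, say the maximal open box centred at $x$ with this property — on which $d(y,T(\cdot))<\varepsilon$; these boxes $B_i$ form a definable open cover of the definably compact set $E$, which therefore admits a finite subcover indexed by pairs $(x_1,y_1),\dots,(x_k,y_k)$. Choosing a definable continuous partition of unity $\{\varphi_i\}_{i=1}^{k}$ subordinate to this subcover — constructed from the distance functions $d_{M^n\setminus B_i}$ and normalised by means of {\L}ojasiewicz's inequality and Tietze's theorem, all quantities living in a bounded region so that the products are legitimate under definable bounded multiplication — the function $f(x)=\sum_{i=1}^k\varphi_i(x)\,y_i$ is definable and continuous, and $d(f(x),T(x))<\varepsilon$ because the convex set $\{y:d(y,T(x))<\varepsilon\}$ contains every $y_i$ with $\varphi_i(x)>0$.

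Finally one iterates: given a definable continuous $\varepsilon$-approximate selection $f$, the map $x\mapsto T(x)\cap\{\,y:d(y,f(x))\le\varepsilon\,\}$ is again definable, lower semi-continuous, with nonempty closed bounded convex values, so applying the lemma to it with a smaller tolerance produces a new approximate selection uniformly close to $f$. In the Banach-space setting one runs this with tolerances $2^{-n}$ and takes the uniform limit; the single genuinely new difficulty is that a recursion indexed by $n\in\mathbb N$ need not yield a definable object, since $\mathbb N$ need not be definable in $\mathcal M$. I would therefore package the construction as one definable continuous map $g:E\times(0,\varepsilon_0]\to P$ with $d(g(x,t),T(x))<t$ whose modulus of convergence as $t\to0^{+}$ is uniform in $x$ and is controlled by the gauge functions of $\Phi$ — the relevant estimate being extracted from {\L}ojasiewicz's inequality applied, at each stage, to $f$ and to $d$, together with uniform continuity of definable functions on definably compact sets. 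The desired selection is then $\overline f(x):=\lim_{t\to0^{+}}g(x,t)$: it is definable because its graph is cut out by a first-order formula and $\mathcal M$ is definably complete, it is continuous because the limit is uniform, and $\overline f(x)\in T(x)$ because $d(g(x,t),T(x))\to0$ and $T(x)$ is closed. \textbf{The main obstacle} is precisely this last step — organising the infinitely many approximation steps into a single definable family and bounding its convergence rate by functions available in the structure rather than by a geometric series of rationals; by comparison the auxiliary ingredients (finite subcovers of definably compact sets from definable open covers, and definable partitions of unity subordinate to finite definable open covers) are routine given the tameness established in Section~\ref{sec:property_a} and definable bounded multiplication.
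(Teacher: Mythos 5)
There is a genuine gap, and it sits exactly where you flagged it: the passage from finitely many approximation steps to an actual selection. In the classical proof each stage involves a \emph{choice} (a finite subcover extracted from an $\varepsilon$-dependent cover, and a partition of unity subordinate to it), and the data chosen at stage $n+1$ depend on the function produced at stage $n$. Your proposed repair --- packaging everything into a single definable continuous $g:E\times(0,\varepsilon_0]\to P$ with $d(g(x,t),T(x))<t$ and a uniform modulus of convergence --- is asserted, not constructed; nothing in {\L}ojasiewicz's inequality or the gauge group $\Phi$ tells you how to make the stage-$(n+1)$ cover and partition of unity vary definably in the stage-$n$ output, and without that the graph of $\overline f$ is not cut out by a first-order formula. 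A secondary unjustified step is the extraction of a finite subcover: the paper's definable compactness is phrased via definable filtered collections of closed sets, and the complements of finite unions from a definable open cover do not form a \emph{definable} filtered collection (the index ranges over finite tuples of unbounded length), so the finite-subcover property for definable covers would itself need proof in this generality.

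The paper circumvents the limit process entirely, following Thamrongthanyalak's d-minimal argument. It first shows (via the dimension theory of Section 2) that $E$ can be partitioned into finitely many quasi-special submanifolds, compatible with the frontier condition, on each of which the restriction of $T$ has closed graph, hence is continuous as a set-valued map. On a piece where $T$ is continuous, the \emph{least norm selection} $\operatorname{lns}_T$ is a definable continuous selection (Lemmas \ref{lem:michael_lns} and \ref{lem:michael_lns2}; definability of the norm uses bounded multiplication after truncating $T$ to a bounded box). One then inducts on the number of pieces: a selection on the union of the lower strata is extended to all of $M^n$ by the weak Tietze theorem (Theorem \ref{thm:weak_tietze}), subtracted from $T$ so that the extension becomes the zero selection, and the least norm selection of the shifted map on the top stratum glues continuously to $0$ along the frontier by lower semi-continuity. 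Every step is a finite construction, which is why it survives in a structure where $\mathbb N$-indexed recursions and uniform limits are unavailable. If you want to salvage your route you would have to prove a definable finite-subcover lemma and, more seriously, a uniformly definable version of the approximate-selection lemma with the tolerance as a definable parameter and with all choices made canonically; the least-norm-selection device is precisely the canonical choice that makes the paper's argument work, and it is worth noting that it replaces your entire iteration.
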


We fix a structure $\mathcal M=(M,<,0,+,\ldots)$ satisfying the conditions in Theorem \ref{thm:michael} until its proof is completed.

We prepare several definitions and lemmas for proving Theorem \ref{thm:michael}.
\begin{definition}
The notation $|\!| x|\!|$ denotes the Euclidean norm in $M^m$.
Let $E$ be a subset of $M^n$ and $T:E \rightrightarrows M^m$ be a set-valued map.
Let $f:E \rightarrow M^m$ be a map.
The notation $T-f:E \rightrightarrows M^m$ denotes the set-valued map given by $(T-f)(x)=\{y-f(x)\;|\; y \in T(x)\}$.

We define the \textit{least norm selection of $T$} when $T(x)$ are closed and convex for all $x \in E$.
The unique point $y \in T(x)$ whose Euclidean norm $|\!| y|\!|$ is the smallest in $T(x)$ is denoted by $\mylns_T(x)$.
The existence and uniqueness easily follow from the assumption that $T(x)$ are closed and convex.
It induces a map $\mylns_T:E \rightarrow M^m$.
\end{definition}
 
 The following proof is straightforward.
\begin{lemma}\label{lem:michael_b}
Let $T: E \rightrightarrows M^m$ be a lower semi-continuous set-valued map and $f:E \rightarrow M$ be a continuous map.
Then, the set-valued map $T-f$ is also lower semi-continuous.
Furthermore, $T-f$ is continuous when $T$ is continuous.
\end{lemma}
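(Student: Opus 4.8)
The plan is to unwind both assertions directly from the definitions, using that neighbourhoods in $M^k$ are generated by the sup-norm boxes $\{z : |z-z_0|<\varepsilon\}$ and that the group operations are continuous.

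\textbf{Lower semi-continuity.} Fix $x_0 \in E$, a point $z_0 \in (T-f)(x_0)$, and a neighbourhood $V$ of $z_0$; write $z_0 = y_0 - f(x_0)$ with $y_0 \in T(x_0)$, and shrink $V$ so that $V \supseteq \{z : |z - z_0| < \varepsilon\}$ for some $\varepsilon > 0$. The idea is to feed the \emph{halved} box $V' := \{y : |y - y_0| < \varepsilon/2\}$, a neighbourhood of $y_0$, into the lower semi-continuity of $T$, obtaining a neighbourhood $U_1$ of $x_0$ with $T(x) \cap V' \ne \emptyset$ for all $x \in U_1$; simultaneously, continuity of $f$ at $x_0$ gives a neighbourhood $U_2$ of $x_0$ on which $|f(x) - f(x_0)| < \varepsilon/2$. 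On $U := U_1 \cap U_2$, choosing any $y \in T(x) \cap V'$, the point $y - f(x)$ lies in $(T-f)(x)$ and satisfies $|(y - f(x)) - z_0| \le |y - y_0| + |f(x) - f(x_0)| < \varepsilon$ by the triangle inequality, hence belongs to $V$. Thus $(T-f)(x) \cap V \ne \emptyset$ for every $x \in U$, which is exactly lower semi-continuity of $T - f$.

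\textbf{Closed graph.} Suppose in addition $\Gamma(T)$ is closed in $E \times M^m$. Consider $\Theta : E \times M^m \to E \times M^m$ defined by $\Theta(x,y) = (x, y - f(x))$; it is a homeomorphism with inverse $(x,z) \mapsto (x, z + f(x))$, both maps being continuous since $f$ is continuous and addition in the group is continuous. As $\Gamma(T-f) = \Theta(\Gamma(T))$ and a homeomorphism carries closed sets to closed sets, $\Gamma(T-f)$ is closed in $E \times M^m$. Combined with the lower semi-continuity already proved (a continuous set-valued map is in particular lower semi-continuous), this shows $T - f$ is continuous; definability of $\Gamma(T-f)$ is automatic, since $\Gamma(T-f) = \{(x,z) : (x, z + f(x)) \in \Gamma(T)\}$.

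There is no genuine obstacle here, in keeping with the paper's remark that the argument is straightforward; the only care needed is the choice of the halving factor $\varepsilon/2$ so that the triangle inequality closes, and the observation that translation by $f$ is a homeomorphism — not merely a continuous bijection — so that closedness transfers in the direction we need.
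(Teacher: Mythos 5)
Your proof is correct and is exactly the straightforward argument the paper has in mind (the paper states the lemma with the remark that the proof is straightforward and omits it entirely): the $\varepsilon/2$ triangle-inequality argument for lower semi-continuity, and the observation that $(x,y)\mapsto(x,y-f(x))$ is a homeomorphism carrying $\Gamma(T)$ onto $\Gamma(T-f)$, hence preserving closedness. Nothing is missing; note only that the $f\colon E\to M$ in the lemma statement is a typo for $f\colon E\to M^m$, which you have correctly interpreted.
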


We also need the following lemma:
\begin{lemma}\label{lem:michael_lns}
Let $T: E \rightrightarrows M^m$ be a continuous set-valued map such that $T(x)$ are closed and convex for all $x \in E$.
Then, the least norm selection $\mylns_T:E \rightarrow M^m$ is a continuous selection of $T$.
\end{lemma}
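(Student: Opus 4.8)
The plan is to dispose of the selection part in one line and concentrate on continuity. Since $\mylns_T(x)$ is by construction the point of $T(x)$ of smallest Euclidean norm---which exists and is unique because $T(x)$ is nonempty, closed and convex---it lies in $T(x)$, so $\mylns_T$ is a selection; it remains to prove that $\mylns_T$ is continuous at an arbitrary fixed $x_0\in E$. Write $y_0=\mylns_T(x_0)$ and $r_0=\|y_0\|$. I would isolate two facts: an \emph{outer} estimate $\|\mylns_T(x)\|\le r_0+\varepsilon'$ for $x$ near $x_0$ (coming purely from lower semi-continuity), and a \emph{rigidity} statement forcing $\mylns_T(x)$ to be genuinely close to $y_0$ (coming from the closed graph of $T$ together with the parallelogram law). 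In the classical setting the second fact is proved by extracting a convergent subsequence of the $\mylns_T(x)$ as $x\to x_0$; here that step must be replaced by a frontier argument inside a definably compact region.

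For the outer estimate, fix $\varepsilon>0$, choose a small $\varepsilon'>0$ (whose precise smallness is fixed at the end), and apply lower semi-continuity of $T$ at $x_0$ to the point $y_0\in T(x_0)$ and the open ball $B(y_0,\varepsilon')=\{y\in M^m\mid \|y-y_0\|<\varepsilon'\}$: there is $\delta_1>0$ with $T(x)\cap B(y_0,\varepsilon')\neq\emptyset$ for all $x\in E$ with $d(x,x_0)<\delta_1$. Any point of that intersection has norm $<r_0+\varepsilon'$, so $\|\mylns_T(x)\|\le r_0+\varepsilon'$ for all such $x$.

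Now suppose $\mylns_T$ were discontinuous at $x_0$: there is $\varepsilon>0$ such that every neighbourhood of $x_0$ contains some $x\in E$ with $\|\mylns_T(x)-y_0\|\ge\varepsilon$. Form
\[
\Gamma=\{(x,\mylns_T(x))\mid x\in E,\ d(x,x_0)\le \delta_1/2,\ \|\mylns_T(x)-y_0\|\ge\varepsilon\}\subseteq M^n\times M^m .
\]
On the region cut out here, $\mylns_T(x)$ lies in the fixed closed ball of radius $r_0+\varepsilon'$ about the origin, so $\Gamma$ is bounded; it is definable because the least-norm comparison and the quantity $\|\mylns_T(x)-y_0\|$ only involve coordinates confined to a bounded box, where the multiplication of $\mathcal M$ is available. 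Hence $\overline{\Gamma}$ is closed, bounded and definable, and its image under the projection $\pi$ onto the first $n$ coordinates is closed and bounded by \cite[Lemma 1.7]{M}. Since every sufficiently small neighbourhood of $x_0$ meets $\pi(\Gamma)$, we get $x_0\in\overline{\pi(\Gamma)}\subseteq\pi(\overline{\Gamma})$, so there is $w\in M^m$ with $(x_0,w)\in\overline{\Gamma}$. Because $\Gamma\subseteq\Gamma(\mylns_T)\subseteq\Gamma(T)$, because $\Gamma(T)$ is closed in $E\times M^m$, and because $x_0\in E$, the limit point $(x_0,w)$ belongs to $\Gamma(T)$, i.e. $w\in T(x_0)$; moreover the closed conditions $\|w\|\le r_0+\varepsilon'$ and $\|w-y_0\|\ge\varepsilon$ pass to the limit.

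Finally I would extract the contradiction by uniform convexity. The midpoint $(w+y_0)/2$ lies in $T(x_0)$ by convexity, and since $(M,<,0,1,+,\cdot)$ is a real closed field (Proposition \ref{prop:rcf}) the parallelogram identity applies:
\[
\left\|\frac{w+y_0}{2}\right\|^2=\frac12\|w\|^2+\frac12\|y_0\|^2-\frac14\|w-y_0\|^2\le \frac{(r_0+\varepsilon')^2+r_0^2}{2}-\frac{\varepsilon^2}{4}.
\]
Choosing $\varepsilon'$ at the outset so small that $r_0\varepsilon'+\tfrac12\varepsilon'^2<\tfrac14\varepsilon^2$, the right-hand side is $<r_0^2$, so $T(x_0)$ contains a point of norm strictly less than $r_0=\|\mylns_T(x_0)\|$, a contradiction. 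Hence $\mylns_T$ is continuous at $x_0$, and, $x_0$ being arbitrary, on all of $E$. The two norm computations are routine; the one delicate point is the third step, where sequences are unavailable and the limit point $w$ has to be manufactured as a frontier point of a bounded definable set, invoking that for definable sets "closed $+$ bounded $=$ definably compact"---precisely the place where definable completeness and the definable bounded multiplication of $\mathcal M$ enter.
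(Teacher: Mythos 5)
Your argument is correct, and its skeleton---argue by contradiction, produce a limit point $(x_0,w)$ of the graph of $\mylns_T$ over $x_0$ with $\|w-y_0\|\ge\varepsilon$, and play this off against the upper bound $\|\mylns_T(x)\|\le r_0+\varepsilon'$ supplied by lower semi-continuity---is the same as the paper's. The differences sit at the two ends. At the front, where the paper simply asserts that discontinuity at $a$ yields a point $(a,b)$ in the closure of the graph minus the graph, you manufacture $w$ explicitly as a frontier point of a bounded definable set via the projection lemma of Miller; this is the more careful rendering of that step, though note it uses definability of $T$ (to make $\Gamma$ a definable set), which is not among the stated hypotheses of the lemma---it holds in every application in the paper, and the paper's own one-line assertion implicitly needs the same thing, since in a general definably complete structure a bounded non-definable graph need not have a limit point over $a$. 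At the back, the paper finishes without the parallelogram law: uniqueness of the minimizer already forces $\|b\|>r_0$ strictly, so setting $\varepsilon=(\|b\|-r_0)/2$, lower semi-continuity gives $\|\mylns_T(x)\|<(\|b\|+r_0)/2$ near $a$, while points $x_0$ with $\mylns_T(x_0)$ close to $b$ give $\|\mylns_T(x_0)\|>(\|b\|+r_0)/2$, an immediate clash. Your uniform-convexity finish via the midpoint $(w+y_0)/2$ is equally valid: the parallelogram identity is an algebraic identity over the real closed field of Proposition \ref{prop:rcf}, your inequality chain is right, and the choice $r_0\varepsilon'+\varepsilon'^2/2<\varepsilon^2/4$ does push $\|(w+y_0)/2\|^2$ strictly below $r_0^2$. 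It costs a little more computation, but it makes the role of convexity explicit rather than hiding it inside the uniqueness of the least-norm point, and it is the argument that would survive in settings where one only has a uniformly convex norm rather than the Euclidean one.
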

\begin{proof}
We have $\mylns_T(x) \in T(x)$ because $T(x)$ is closed and convex.
The remaining task is to demonstrate that $\mylns_T$ is continuous. 

Assume for contradiction that $\mylns_T$ is discontinuous at $a \in E$.
Let $G$ be the graph of $\mylns_T$ and $G'$ be its closure in $E \times M^m$.
Since $G \subseteq \Gamma(T)$ and $\Gamma(T)$ is closed in $E \times M^m$, the set $G'$ is also contained in the graph $\Gamma(T)$.
We can take $b \in M^n$ such that $(a,b) \in G' \setminus G$ because $\mylns_T$ is discontinuous at the point $a \in E$.
The point $b$ is contained in $T(a)$.

Set $\varepsilon =(|\!| b |\!|-|\!| \mylns_T(a) |\!|)/2$.
The definition of $\mylns_T$ yields that $\varepsilon >0$.
Due to the lower semi-continuity of $T$, there exists a definable open neighborhood $U$ of $a$ such that, for any $x \in U$, we have $|\!| y_x - \mylns_T(a) |\!|<\varepsilon$ for some $y_x \in T(x)$.
In particular, we get $|\!|\mylns_T(x)|\!| \leq |\!|y_x |\!| < |\!|\mylns_T(a)|\!|+\varepsilon = (|\!| b |\!|+|\!| \mylns_T(a) |\!|)/2$ for all $x \in U$.
On the other hand, the point $(a,b)$ is in the closure of the graph $G$ of $\mylns_T$.
We can take $x_0 \in U$ so that $ |\!| \mylns_T(x_0) -b  |\!| < \varepsilon$.
It implies that $ |\!| \mylns_T(x_0) |\!| \geq  |\!| b  |\!| -  |\!| \mylns_T(x_0) -b  |\!| >  |\!| b  |\!| -\varepsilon = (|\!| b |\!|+|\!| \mylns_T(a) |\!|)/2$.
It is a contradiction.
\end{proof}

The following lemma is a key lemma for the proof of Theorem \ref{thm:michael}.
\begin{lemma}\label{lem:michael_lns2}
Let $E$ be a definable, closed and bounded subset of $M^n$ and $T:E \rightrightarrows M^m$ be a definable lower semi-continuous set-valued map.
The least norm selection $\mylns_T:E \rightarrow M^m$ is definable.
\end{lemma}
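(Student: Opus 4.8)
The plan is to exhibit the graph $G=\{(x,y)\in E\times M^m\mid y=\mylns_T(x)\}$ as a definable set. The conceptual point is that, although $\mathcal M$ carries only \emph{bounded} multiplication, $\mylns_T$ is a bounded map, so a single fixed bounded box hosts all the products needed to compare Euclidean norms; establishing this boundedness is the heart of the argument, and I expect it to be the main obstacle.

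First I would bring in the definable auxiliary function $\rho\colon E\to M$, $\rho(x)=\inf\{|y|\mid y\in T(x)\}$, the distance from the origin to $T(x)$ in the $\max$-norm $|\cdot|$: this norm involves only $<$ and $+$ hence is definable, each $T(x)$ is nonempty and closed so the infimum is attained, and $\rho$ is definable because $\Gamma(T)$ is. Let $\|\cdot\|$ be the Euclidean norm on $M^m$, which makes sense since $M$ is a real closed field by Proposition \ref{prop:rcf}. From the elementary inequalities $|v|\le\|v\|\le m|v|$, choosing $\widehat y\in T(x)$ with $|\widehat y|=\rho(x)$ and using that $\mylns_T(x)$ minimizes $\|\cdot\|$ over $T(x)$, one gets
\[
\rho(x)\ \le\ |\mylns_T(x)|\ \le\ \|\mylns_T(x)\|\ \le\ \|\widehat y\|\ \le\ m\,\rho(x)\qquad(x\in E).
\]

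The key step is that $\rho$ is bounded above on $E$. Lower semi-continuity of $T$ makes $\{x\in E\mid\rho(x)<c\}$ open for every $c\in M$: if $y_0\in T(x_0)$ with $|y_0|<c$, then the $\max$-norm ball of radius $c-|y_0|$ about $y_0$ pulls back, via lower semi-continuity, to a neighbourhood of $x_0$ on which $\rho<c$. Thus $\rho$ is a definable upper semi-continuous function on the definably compact set $E$; were it unbounded, the sets $\{x\in E\mid\rho(x)\ge c\}$ $(c\in M)$ would form a decreasing definable family of nonempty closed subsets of $E$ with empty intersection, contradicting the definable compactness of closed and bounded definable sets. Pinning down exactly this instance of definable compactness is the one slightly delicate point, though it is a standard feature of definably complete structures. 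Now fix $\beta\in M_{>0}$ with $\rho\le\beta$ on $E$ and set $R_0=m\beta\in M$; by the display, $\mylns_T(x)\in[-R_0,R_0]^m$ and $\|\mylns_T(x)\|\le R_0$ for all $x\in E$.

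It remains to check that
\[
G=\Bigl\{(x,y)\ \Big|\ x\in E,\ y\in T(x),\ y\in[-R_0,R_0]^m,\ \forall z\,\bigl(z\in T(x)\wedge z\in[-R_0,R_0]^m\rightarrow\textstyle\sum_{i=1}^m y_i^2\le\sum_{i=1}^m z_i^2\bigr)\Bigr\}.
\]
For "$\subseteq$": $\mylns_T(x)\in T(x)\cap[-R_0,R_0]^m$ by the above, and $\|\mylns_T(x)\|\le\|z\|$ for every $z\in T(x)$, a fortiori for $z$ in the box. For "$\supseteq$": applying the condition with the admissible point $z=\mylns_T(x)$ gives $\|y\|\le\|\mylns_T(x)\|$, so $y\in T(x)$ also minimizes $\|\cdot\|$ over $T(x)$, and strict convexity of $\|\cdot\|$ together with convexity of $T(x)$ forces $y=\mylns_T(x)$. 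Finally, the right-hand side is a formula over $\mathcal M$: "$z\in T(x)$" and "$y\in T(x)$" refer to the definable set $\Gamma(T)$, the box conditions use only $<$, and since every coordinate $y_i,z_i$ then lies in the bounded interval $(-R_0-1,R_0+1)$, the squares $y_i^2$ and $z_i^2$ are supplied by the definable bounded multiplication. Hence $G$ is definable, and therefore so is $\mylns_T$.
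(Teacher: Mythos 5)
Your proof is correct and follows essentially the same route as the paper: both arguments use lower semi-continuity plus definable compactness of $E$ to produce a uniform bound $R$ so that every $T(x)$ meets a fixed bounded box (your sets $\{x\mid\rho(x)\ge c\}$ are exactly the paper's $G_r$), and then observe that the Euclidean-norm minimization takes place inside that box, where definable bounded multiplication makes the norm comparison definable. The only cosmetic difference is that the paper truncates $T$ to $T'(x)=T(x)\cap[-\sqrt{m}R,\sqrt{m}R]^m$ and notes $\mylns_T=\mylns_{T'}$, whereas you write out the graph formula explicitly.
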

\begin{proof}
We have only to prove that there exists a positive element $R \in M$ such that, for any $x \in E$, the intersection $T(x) \cap [-R,R]^m$ is not empty set.
If it is the case, consider the set-valued map $T':E \rightarrow M^m$ given by $T'(x)=T(x) \cap [-\sqrt{m}R,\sqrt{m}R]^m$.
The set $T'(x)$ is still closed and convex. 
We have $\mylns_{T}=\mylns_{T'}$ because there exists $y \in T(x)$ with $|\!| y |\!| \leq \sqrt{m}R$ for any $x \in E$ by the assumption.
The restriction $|\!| \cdot |\!| _R$ of the Euclidean norm $|\!| \cdot |\!|$ to $[-\sqrt{m}R,\sqrt{m}R]^m$ is definable because $\mathcal M$ has bounded definable multiplication compatible to the addition.
The map $\mylns_{T'}$ is defined by using the definable function $|\!| \cdot |\!| _R$.
Therefore, it is definable.
Consequently, the map $\mylns_{T}$ is also definable.

We demonstrate such an $R$ exists.
Assume the contrary.
For any positive $r \in M$, set 
\begin{align*}
&F_r=\{x \in E\;|\; T(x) \cap (-r,r)^m \neq \emptyset\}\text{ and }\\
&G_r=\{x \in E\;|\; T(x) \cap (-r,r)^m = \emptyset\}\text{.}
\end{align*}
We have $G_r \neq \emptyset$ for all $r>0$ by the assumption.
We show that $G_r$ is closed; equivalently, $F_r$ is open.
Take an arbitrary point $x_0 \in F_r$.
We can take $y_0 \in T(x_0) \cap  (-r,r)^m $.
Take a sufficiently small open box $V$ containing the point $y_0$ so that $V \subseteq (-r,r)^m$.
By lower semi-continuity, there exists an open neighborhood $U$ of $x_0$ such that $T(x) \cap V \neq \emptyset$ for all $x \in U$.
We get $T(x) \cap  (-r,r)^m \neq \emptyset$ for all $x \in U$.
It implies that $U$ is contained in $F_r$.

We obviously have $G_r \subseteq G_s$ when $r>s$.
The family $\{G_r\}_{r>0}$ is a definable filtered collection of nonempty definable closed sets.
Since $E$ is definably compact by Remark \ref{rem:def_compact}, the intersection $\bigcap_{r >0}G_r$ is not empty.
Take $x \in \bigcap_{r >0}G_r$, then $T(x)$ is an empty set by the definition of $G_r$.
It is a contradiction.
\end{proof}

We finally prove Theorem \ref{thm:michael}. 

\begin{proof}[Proof of Theorem \ref{thm:michael}]
Let $C$ be a quasi-special submanifold contained in $E$.
The notation $\Gamma(T|_C)$ denotes the graph of the restriction of $T$ to $C$.
The notation $\partial \Gamma(T|_C)$ denotes its frontier.
Let $\pi:M^{n+m} \rightarrow M^n$ be the projection onto the first $n$ coordinates.
Set $S=\pi(\partial \Gamma(T|_C))$.
We first demonstrate the following claim:
\medskip

\textbf{Claim 1.} $\dim S < \dim C$.
\medskip

Assume the contrary.
Set $d_0=\dim C$.
Since $S$ is contained in the closure of $C$, we have $\dim S=d_0$.
We also have $\dim (S \cap C)=d_0$ because $\dim \partial C < \dim C$ by Proposition \ref{prop:dim}(8).
Take a definable map $f:S \rightarrow M^{m}$ such that $(x,f(x)) \in \partial \Gamma(T|_C)$ for all $x \in S$ by the definable choice lemma \cite[Lemma 3.1]{Fuji5}.
We have $d_{T(x)}(f(x))>0$ because $T(x)$ is closed.
The definable subset $D$ of $S \cap C$ of points at which both $f$ and the map given by $x \mapsto d_{T(x)}(f(x))$ are continuous is of dimension $d_0$ by Proposition \ref{prop:dim}(7).
There exists a point $z \in D$  such that, for any open box $U$ containing the point $z$, the intersection $U \cap D$ is of dimension $d_0$ by Proposition \ref{prop:dim}(10).
Since $C$ is a quasi-special submanifold, we can take an open box $U_1$ containing the point $z$, a coordinate projection $\pi_1:M^n \rightarrow M^{d_0}$ and a definable continuous map $g: \pi_1(U_1) \rightarrow M^{n-d_0}$ such that $C \cap U_1$ is the graph of $g$.
The projection image $\pi_1(U_1 \cap D)$ has an interior by Proposition \ref{prop:dim}(11).
Therefore, shrinking $U_1$ if necessary, we may assume that $\pi_1(D)$ contains $\pi_1(U_1)$.
Move $z$ so that $x \in U_1 \cap D$.
Let $B$ be the intersection of $D$ with a closed box containing $z$ and contained in $U_1$.
We finally constructed the definable subset $B$ of $S \cap C$ such that
\begin{itemize}
\item $B$ is closed in $S$,
\item the interior of $B$ in $C$ is not empty, and 
\item the restrictions of $f$ and the map given by $x \mapsto d_{T(x)}(f(x))$ to $B$ are continuous.
\end{itemize}
We can take $N>0$ so that $d_{T(x)}(f(x))>N$ for all $x \in B$ by the max-min principle.
Take a point $x_0$ in the interior of $B$ in $C$ and set $\varepsilon=N/3$.
There exists an open neighborhood $U$ of $x_0$ in $S$ contained in $B$ such that $d(f(x),f(x_0))<\varepsilon$ for all $x \in U$ because $f$ is continuous.
Since $C$ is a quasi-special submanifold, the set $U$ is also an open neighborhood of $x_0$ in $C$, shrinking $U$ if necessary.
We can take $(x,y) \in \Gamma(T|_C)$ such that $x \in U$ and $d(y,f(x_0))<\varepsilon$ because $(x_0,f(x_0))$ is in the closure of $\Gamma(T|_C)$.
We get $d_{T(x)}(f(x)) \leq d(y,f(x)) \leq d(y,f(x_0))+d(f(x),f(x_0))< 2\varepsilon = 2N/3$.
It is a contradiction.
\medskip

\textbf{Claim 2.} For a given finitely many definable subsets $X_1,\ldots X_k$ of $E$, 
 there exists a partition $E=C_1 \cup \ldots \cup C_d$ of $E$ into quasi-submanifolds partitioning $X_1,\ldots X_k$ and satisfying the frontier condition such that the restriction $T|_{C_k}:C_k \rightrightarrows M^m$ to $C_k$ is continuous for any $1 \leq k \leq d$.
 \medskip 
 
We prove it by induction on $\dim E$.
When $E$ is of dimension zero, $E$ is discrete and closed by Proposition \ref{prop:dim}(1).
The set-valued map $T$ is continuous in this case.
The set $E$ is a quasi-special submanifold.
This case is easy.
We consider the case in which $\dim E>0$.
Apply Proposition \ref{prop:frontier_condition}.
We can get a partition $E=D_1 \cup \ldots \cup D_d$ into quasi-special submanifolds satisfying the frontier condition and partitioning $X_1,\ldots X_k$.
Permuting if necessary, we may assume that $D_1, \ldots, D_l$ are of dimension $\dim E$.
By Claim 1, for any $1 \leq i \leq l$, we can take definable subset $S_i$ of $D_i$ of dimension smaller than $\dim E$ such that the restriction of $T$ to $D_i \setminus S_i$ is continuous.
Apply the induction hypothesis to $\bigcup_{i=1}^l S_i \cup \bigcup_{i=l+1}^d D_i$ and the family $\{S_i\}_{1 \leq i \leq l} \cup \{D_i\}_{l<i \leq d}$.
We get the desired partition into quasi-special submanifolds.
\medskip

We finally prove the theorem by induction on the number of quasi-special submanifolds $d$ given in Claim 2.
The theorem immediately follows from Lemma \ref{lem:michael_lns} and Lemma \ref{lem:michael_lns2} when $d=1$.
We consider the case in which $d>1$.
We may assume that $\dim C_d=\dim E$ without loss of generality.
Set $D=\bigcup_{i=1}^{d-1}C_i$.
The frontier condition implies that $D$ is closed.
We can take a definable continuous selection $f_1:D \rightarrow M^m$ of the restriction $T|_D$ of $T$ to $D$ by the induction hypothesis.
Let $f_2:M^n \rightarrow M^m$ be a definable continuous extension of $f_1$ given by Theorem \ref{thm:weak_tietze}.
The map $f_3$ is its restriction to $E$.
We may assume that $f_3$ is constantly zero considering $T-f_3$ in place of $T$ by Lemma \ref{lem:michael_b}.

The least norm selection $\mylns_T$ is definable by Lemma \ref{lem:michael_lns2}.
The least norm selection $\mylns_{T|_{C_d}}$ for the restriction $T|_{C_d}$ coincides with the restriction of $\mylns_T$ to $C_d$ by the definition of least norm selections.
It is continuous by Claim 2 and Lemma \ref{lem:michael_lns}.
Consider the definable map $f:E \rightarrow M^m$ given by
$$
f(x)=\left\{
\begin{array}{ll} \mylns_T(x) & \text{ when }x \in C_d,\\ 0 & \text{ elsewhere.}\end{array}
\right.
$$
We have only to show that $f$ is continuous.
The map $f$ is obviously continuous out of the frontier $\partial C_d \cap E$ of $C_d$ in $E$.
Fix an arbitrary point $x_0 \in \partial C_d \cap E$.
We have $f(x_0)=0$.
Since $T$ is lower semi-continuous, for any $\varepsilon>0$, there exists $\delta>0$ such that we can take $y_x \in T(x)$ with $|\!|y_x|\!|<\varepsilon$ for all $x \in E$ with $|\!|x-x_0|\!|<\delta$.
We have $|\!| \mylns_T(x) |\!| \leq |\!|y_x|\!|<\varepsilon$ by the definition of least norm selections.
It implies that $f$ is continuous.
\end{proof}

\section{Psudo-definable spaces}\label{sec:definable_space}
\subsection{Definition}
\begin{definition}[Psudo-definable space]
Let $\mathcal M=(M;<,\ldots)$ be an expansion of a dense linear order without endpoints.
The set $M$ has the topology induced from the order $<$.
The Cartesian product $M^n$ equips the product topology.
The topology of a subset of $M^n$ is the relative topology.

A pair $(S, \{\varphi_i:U_i \rightarrow U'_i\}_{i \in I})$ of a topological space and a finite family of homeomorphism is called a \textit{pseudo-$\mathcal M$-definable space} if 
\begin{itemize}
\item $\{U_i\}_{i \in I}$ is a finite open cover of $S$, 
\item $U'_i$ is a definable subset of $M^{m_i}$ for any $i \in I$ and,
\item the composition $(\varphi_j|_{U_i \cap U_j}) \circ (\varphi_i|_{U_i \cap U_j})^{-1}:\varphi_i(U_i \cap U_j) \rightarrow \varphi_j(U_i \cap U_j)$ is a definable homeomorphism for any $i \neq j$ whenever $U_i \cap U_j$ is not an empty set.
\end{itemize}
Here, the notation $\varphi_i|_{U_i \cap U_j}$ denotes the restriction of $\varphi_i$ to ${U_i \cap U_j}$.
We simply call $S$ a \textit{pseudo-definable space} when the structure $\mathcal M$ is clear from the context.
The topological space $S$ is called the \textit{underlying topological space} of the pseudo-definable space.
The family $\{\varphi_i:U_i \rightarrow U_i'\}_{i \in I}$ is called a \textit{definable atlas on $S$}.
We often write $S$ instead of $(S, \{\varphi_i:U_i \rightarrow U'_i\}_{i \in I})$ for short.

We consider an expansion of an ordered divisible abelian group $\mathcal M=(M;<,0,+,\ldots)$.
Given a topological space $S$, two definable atlases $\{\varphi_i:U_i \rightarrow U_i'\}_{i \in I}$ and $\{\psi_j:V_j \rightarrow V_j'\}_{j \in J}$ on $S$ are \textit{equivalent} if, for all $i \in I$ and $j \in J$,
\begin{itemize}
\item $\varphi_i(U_i \cap V_j)$ and $\psi_j(U_i \cap V_j)$ are open definable subsets of $U'_i$ and $V'_j$, respectively, and
\item the homeomorphisms $(\psi_j|_{U_i \cap V_j}) \circ (\varphi_j|_{U_i \cap V_j})^{-1}:\varphi_i(U_i \cap V_j) \rightarrow \psi_j(U_i \cap V_j)$ are definable whenever $U_i \cap U_j \neq \emptyset$.
\end{itemize}
The above relation is obviously an equivalence relation.

A subset $X$ of the pseudo-definable space $S$ is \textit{definable} when $\varphi_i(X \cap U_i)$ are definable for all $i \in I$. 
When two atlases $\{\varphi_i:U_i \rightarrow U_i'\}_{i \in I}$ and $\{\psi_j:V_j \rightarrow V_j'\}_{j \in J}$ of a topological space $S$ is equivalent, it is obvious that a subset of the pseudo-definable space $(S,\{\varphi_i\}_{i \in I})$ is definable if and only if it is definable as a subset of the pseudo-definable space $(S,\{\psi_j\}_{j \in J})$.

The Cartesian product of two pseudo-definable spaces is naturally defined.
A map $f:S \rightarrow T$ between pseudo-definable spaces is \textit{definable} if its graph is definable in $S \times T$.
Note that a definable set is naturally a pseudo-definable space.
\end{definition}

\begin{proposition}\label{prop:closure}
Consider an expansion of a dense linear order without endpoints $\mathcal M=(M;<,\ldots)$ and a pseudo-definable space $(S, \{\varphi_i:U_i \rightarrow U'_i\}_{i \in I})$.
Let $X$ be a definable subset of the pseudo-definable space.
We have $$\mycl(X)=\bigcup_{i \in I} \varphi_i^{-1}(\mycl(\varphi_i(X \cap U_i)))\text{.}$$
\end{proposition}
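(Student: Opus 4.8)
The plan is to prove the two inclusions separately, working chart by chart. For the inclusion $\bigcup_{i \in I} \varphi_i^{-1}(\mycl(\varphi_i(X \cap U_i))) \subseteq \mycl(X)$, fix $i$ and a point $p = \varphi_i^{-1}(q)$ with $q \in \mycl(\varphi_i(X \cap U_i))$ inside $U_i'$. Since $\varphi_i$ is a homeomorphism from $U_i$ onto $U_i'$, it is in particular continuous with continuous inverse, so $\varphi_i^{-1}$ maps the closure of $\varphi_i(X \cap U_i)$ \emph{relative to $U_i'$} into the closure of $X \cap U_i$ relative to $U_i$, hence into $\mycl(X)$ (closure taken in $S$). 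Here I would be slightly careful: $\mycl(\varphi_i(X\cap U_i))$ on the right-hand side should be read as the closure inside $U_i'$ (equivalently, the ambient closure intersected with $U_i'$), since $\varphi_i$ is only defined on $U_i$; this is the natural reading and I would state it explicitly. This direction is essentially immediate from continuity of the $\varphi_i^{-1}$.

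For the reverse inclusion $\mycl(X) \subseteq \bigcup_{i \in I} \varphi_i^{-1}(\mycl(\varphi_i(X \cap U_i)))$, take $p \in \mycl(X)$. Because $\{U_i\}_{i \in I}$ is an \emph{open} cover of $S$, there is some $i$ with $p \in U_i$. Now $U_i$ is an open neighborhood of $p$, so $p$ is also in the closure of $X \cap U_i$ relative to the subspace $U_i$: indeed, every neighborhood of $p$ meets $X$, and intersecting with the open set $U_i$ shows every neighborhood of $p$ inside $U_i$ meets $X \cap U_i$. Applying the homeomorphism $\varphi_i$ (which preserves closures within $U_i$/$U_i'$), we get $\varphi_i(p) \in \mycl(\varphi_i(X \cap U_i))$, where again the closure is taken in $U_i'$. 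Hence $p \in \varphi_i^{-1}(\mycl(\varphi_i(X \cap U_i)))$, as desired.

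The only genuinely delicate point—and the one I would expect to need a sentence of care—is the bookkeeping about \emph{where} each closure is taken: the right-hand side of the displayed formula must use closures relative to the chart images $U_i'$ (or, what amounts to the same thing after applying $\varphi_i^{-1}$, relative to the open sets $U_i$), not closures in the full ambient $M^{m_i}$; a point on the "boundary" of $U_i$ inside $M^{m_i}$ has no preimage under $\varphi_i$ and must not be included. With that convention fixed, both inclusions reduce to the standard fact that a homeomorphism between two spaces carries closures to closures, together with the fact that for an \emph{open} subspace $U$ of $S$ and any $Y \subseteq U$ one has $\mycl_S(Y) \cap U = \mycl_U(Y)$. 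No definability or o-minimality input is needed here; the statement is purely topological, and the role of the pseudo-definable structure is only to make sense of "$X$ definable" so that each $\varphi_i(X \cap U_i)$ is a definable set whose closure is again definable.

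Having dealt with both directions, I would conclude by noting that the union on the right-hand side is therefore equal to $\mycl(X)$, and in particular it is a definable subset of the pseudo-definable space $S$ (each $\varphi_i(\mycl(X) \cap U_i)$ being the closure in $U_i'$ of the definable set $\varphi_i(X \cap U_i)$, hence definable), which records that closures of definable subsets of pseudo-definable spaces are again definable.
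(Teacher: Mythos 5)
Your proof is correct, and it is exactly the routine topological argument (two inclusions via the homeomorphism property of the charts, plus the identity $\mycl_S(Y)\cap U=\mycl_U(Y)$ for open $U$) that the paper omits with the remark ``Routine.'' Your explicit care about where each closure is taken --- reading $\mycl(\varphi_i(X\cap U_i))$ relative to $U_i'$ so that $\varphi_i^{-1}$ is applicable --- is the right and only delicate point, and your closing observation that closures of definable subsets are again definable is a correct and useful byproduct.
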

\begin{proof}
Routine. Omitted.
\end{proof}

\begin{definition}[Dimension]\label{def:dim}
Consider a definably complete locally o-minimal structure $\mathcal M$.
Let $S$ be a pseudo-definable space and $\{\varphi_i:U_i \rightarrow U_i'\}_{i \in I}$ be its definable atlas.
The dimension of a definable set $X$ is defined by $$\dim X=\max_{i \in I} \dim \varphi_i(X \cap U_i)\text{.}$$
\end{definition}

\begin{proposition}\label{prop:well_defined_dim}
Consider a definably complete locally o-minimal structure $\mathcal M$ and two pseudo-definable spaces $(S,\{\varphi_i:U_i \rightarrow U_i'\}_{i \in I})$ and $(S,\{\psi_i:V_j \rightarrow V_j'\}_{j \in J})$ having the same underlying topological space $S$ such that their definable atlases are equivalent.
The dimension of a definable set of $S$ in Definition \ref{def:dim} is independent of choice of a definable atlas of $S$.
\end{proposition}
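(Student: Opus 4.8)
The plan is to reduce the claimed equality to the dimension calculus for definable subsets of $M^m$ collected in Proposition \ref{prop:dim}. By the symmetry between the two atlases it is enough to prove, for each fixed $i \in I$, the inequality $\dim \varphi_i(X \cap U_i) \le \max_{j \in J}\dim \psi_j(X \cap V_j)$; taking the maximum over $i$ gives one inequality between the two candidate dimensions, and interchanging the roles of $\{\varphi_i\}_{i \in I}$ and $\{\psi_j\}_{j \in J}$ gives the other.

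Fix $i \in I$. Since $\{V_j\}_{j \in J}$ is a finite open cover of $S$ and $\varphi_i|_{U_i}$ is a bijection, I would write
$$\varphi_i(X \cap U_i) = \bigcup_{j \in J}\varphi_i(X \cap U_i \cap V_j) = \bigcup_{j \in J}\big(\varphi_i(X \cap U_i) \cap \varphi_i(U_i \cap V_j)\big).$$
Here $\varphi_i(X \cap U_i)$ is a definable subset of $M^{m_i}$ because $X$ is definable in the pseudo-definable space, and $\varphi_i(U_i \cap V_j)$ is an open definable subset of $U_i'$ by the equivalence of the two atlases (and is empty when $U_i \cap V_j = \emptyset$); hence every $\varphi_i(X \cap U_i \cap V_j)$ is a definable subset of $M^{m_i}$. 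Applying Proposition \ref{prop:dim}(5) inductively over the finite set $J$ then yields $\dim \varphi_i(X \cap U_i) = \max_{j \in J}\dim \varphi_i(X \cap U_i \cap V_j)$.

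Next I would estimate each term. For $j$ with $U_i \cap V_j \neq \emptyset$ the transition map $\theta_{ij} = (\psi_j|_{U_i \cap V_j}) \circ (\varphi_i|_{U_i \cap V_j})^{-1}$ is a definable homeomorphism from $\varphi_i(U_i \cap V_j)$ onto $\psi_j(U_i \cap V_j)$ whose inverse is again definable, and it maps $\varphi_i(X \cap U_i \cap V_j)$ onto $\psi_j(X \cap U_i \cap V_j)$; applying Proposition \ref{prop:dim}(6) both to $\theta_{ij}$ and to $\theta_{ij}^{-1}$ gives $\dim \varphi_i(X \cap U_i \cap V_j) = \dim \psi_j(X \cap U_i \cap V_j)$. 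By monotonicity of dimension (Proposition \ref{prop:dim}(2)) this is at most $\dim \psi_j(X \cap V_j)$, hence at most $\max_{j' \in J}\dim \psi_{j'}(X \cap V_{j'})$; the empty intersections contribute $-\infty$ and are harmless. Combining this with the displayed maximum formula finishes the fixed-$i$ inequality. I do not expect a genuine obstacle: the only thing to watch is that every set to which Proposition \ref{prop:dim} is applied is a bona fide definable subset of some $M^m$ and that the transition maps are definable in both directions — both of which are exactly what "definable atlas" and "equivalent definable atlases" were set up to guarantee.
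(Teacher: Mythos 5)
Your proposal is correct and follows essentially the same route as the paper: decompose $\varphi_i(X\cap U_i)$ along the finite cover $\{V_j\}_{j\in J}$, use the definable transition homeomorphisms together with Proposition \ref{prop:dim}(6) to identify $\dim\varphi_i(X\cap U_i\cap V_j)$ with $\dim\psi_j(X\cap U_i\cap V_j)$, and conclude via Proposition \ref{prop:dim}(5). The only cosmetic difference is that you argue by two symmetric inequalities while the paper writes a single chain of equalities of maxima over $i$ and $j$; the underlying steps are identical.
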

\begin{proof}
We first demonstrate that $\dim \varphi_i(X \cap U_i \cap V_j)=\dim \varphi_j(X \cap U_i \cap V_j)$ for all $i \in I$ and $j \in J$.
The equality is obvious when $X \cap U_i \cap U_j$ is an empty set.
We next consider the other case.
The map $(\psi_j|_{X \cap U_i \cap V_j}) \circ (\varphi_j|_{X \cap U_i \cap V_j})^{-1}$ is definable homeomorphism between $\varphi_i(X \cap U_i \cap V_j)$ and $\psi_j(X \cap U_i \cap V_j)$.
We have $\dim \varphi_i(X \cap U_i \cap V_j)=\dim \varphi_j(X \cap U_i \cap V_j)$ by Proposition \ref{prop:dim}(6).

The above equality and Proposition \ref{prop:dim}(5) imply that
\begin{align*}
\max_{i \in I} \dim \varphi_i(X \cap U_i) &= \max_{i \in I} \dim \varphi_i(X \cap \bigcup_{j \in J} (U_i \cap V_j)) = \max_{i \in I, j \in J}\dim \varphi_i(X \cap U_i \cap V_j)\\
&=\max_{i \in I, j \in J}\dim \psi_j(X \cap U_i \cap V_j) = \max_{j \in J} \dim \psi_j(X \cap \bigcup_{i \in I} (U_i \cap V_j))\\
&=\max_{j \in J} \dim \psi_j(X \cap V_j)\text{.} 
\end{align*}
We have demonstrated the proposition.
\end{proof}

\subsection{Definably compact sets}

\begin{definition}
For a set $X$, a family $\mathcal F$ of subsets of $X$ is called a \textit{filtered collection} if, for any $B_1, B_2 \in \mathcal F$, there exists $B_3 \in \mathcal F$ with $B_3 \subseteq B_1 \cap B_2$. 

Consider an expansion of a dense linear order without endpoints $\mathcal M=(M;<,\ldots)$.
Let $X$ and $T$ be pseudo-definable spaces.
The parameterized family $\{S_t\}_{t \in T}$ of definable subsets of $X$ is called \textit{definable} if the union $\bigcup_{t \in T} \{t\} \times S_t$ is definable in $T \times X$.

A parameterized family $\{S_t\}_{t \in T}$ of definable subsets of $X$ is a \textit{definable filtered collection} if it is simultaneously definable and a filtered collection.

A definable space $X$ is \textit{definably compact} if every definable filtered collection of closed nonempty subsets of $X$ has a nonempty intersection.
This definition is found in \cite[Section 8.4]{J}.
\end{definition}

\begin{remark}\label{rem:def_compact}
Let $\mathcal M=(M; <, \ldots)$ be a definably complete expansion of a dense linear order without endpoints.
Consider a definable subset $X$ of $M^n$ equipped with the affine topology.
The definable set $X$ is definably compact if and only if it is closed and bounded.
The literally same proof as that for o-minimal structures in \cite[Section 8.4]{J} works. 

The image of a definably compact psudo-definable space under a definable continuous map $f:X \rightarrow Y$ between psudo-definable spaces is definably compact.
It immidiately follows from the definition.
\end{remark}

\begin{lemma}\label{lem:limitset3}
Consider a definably complete locally o-minimal expansion of an ordered group $\mathcal M=(M;<,0,+,\ldots)$.
Let $(X, \{\varphi_i:U_i \rightarrow U'_i\}_{i \in I})$ be a definably compact pseudo-definable space.
Then, $U'_i$ are bounded for all $i \in I$ or there exists a definable homeomorphism between a bounded interval and an unbounded interval.
\end{lemma}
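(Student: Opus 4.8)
The plan is to argue by contraposition: if every $U'_i$ is bounded we are done, so assume $U'_1\subseteq M^{m_1}$ is unbounded, and after permuting coordinates fix a coordinate projection $\rho\colon M^{m_1}\to M$ with $\rho(U'_1)$ unbounded above. I claim it then suffices to exhibit $a<b$ in $M$ and a definable continuous function $\gamma\colon(a,b)\to M$ with $\lim_{t\to a^+}\gamma(t)=+\infty$. Granting this, Theorem~\ref{thm:monotonicity} gives a subinterval $(a',b')\subseteq(a,b)$ on which $\gamma$ is locally constant, locally strictly increasing, or locally strictly decreasing. A continuous locally constant definable function on an interval is constant (an interval is definably connected, as the structure is definably complete), which is incompatible with the limit; and a short argument with the definably connected set $\{t\in(a',b')\mid\gamma(t)<\gamma(s)\}$ shows that a continuous locally strictly monotone definable function on an interval is globally strictly monotone, the limit $+\infty$ excluding the increasing alternative. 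Hence $\gamma$ restricts to a definable homeomorphism of the bounded interval $(a',b')$ onto an unbounded interval, which is exactly the conclusion sought.

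I would obtain $\gamma$ from a ``blow-up point'' produced by definable compactness. Put $g=\rho\circ\varphi_1\colon U_1\to M$; this is definable, continuous, and $\sup_{U_1}g=+\infty$. For $r\in M$ let $A_r=g^{-1}([r,+\infty))\subseteq U_1$; it is a nonempty definable subset of the pseudo-definable space $X$, and $A_r\subseteq A_s$ when $r\ge s$, so the closures $\overline{A_r}$ in $X$ (computed by Proposition~\ref{prop:closure}) form a definable filtered collection of nonempty closed subsets of $X$. By definable compactness pick $p\in\bigcap_{r\in M}\overline{A_r}$. If $p\in U_1$, then $\varphi_1(p)$ lies in the closure in $U'_1$ of $\varphi_1(A_r)\subseteq\{y\in U'_1\mid\rho(y)\ge r\}$ for every $r$, and since that set is closed in $U'_1$ and $\rho$ is continuous this forces $\rho(\varphi_1(p))\ge r$ for all $r$, which is absurd; so $p\in U_j$ for some $j\ne 1$. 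Write $q=\varphi_j(p)$, $W=U_1\cap U_j$, $\Omega_1=\varphi_1(W)$, $\Omega_j=\varphi_j(W)$ and $\psi=\varphi_j\circ\varphi_1^{-1}\colon\Omega_1\to\Omega_j$, a definable homeomorphism between definable subsets of $M^{m_1}$ and $M^{m_j}$. Unwinding $p\in\overline{A_r}$ for all $r$ shows $q\in\overline{\Omega_j}\setminus\Omega_j$ and that the definable continuous function $h=\rho\circ\psi^{-1}\colon\Omega_j\to M$ is unbounded above on every neighbourhood of $q$ in $\Omega_j$. Replacing $\Omega_j$ by its intersection with a bounded open box about $q$ (which retains points where $h$ is arbitrarily large), we may assume $\Omega_j$ is bounded, so $\overline{\Omega_j}$ is definably compact by Remark~\ref{rem:def_compact}; moreover $\dim\Omega_j\ge 1$, since a zero-dimensional definable set is closed by Proposition~\ref{prop:dim}(1) and hence cannot accumulate at $q\notin\Omega_j$.

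It remains to extract $\gamma$ from $h$, which is the heart of the proof and which I would carry out by induction on $e=\dim\Omega_j$. By the decomposition theorem, Proposition~\ref{prop:frontier_condition}, $\Omega_j$ is a finite union of quasi-special submanifolds; since there are finitely many, $h$ is already unbounded near $q$ on one of them, say $C$, which we may take $e$-dimensional, so $q\in\overline{C}\setminus C$. Using the local graph structure of $C$ over an open subset $D\subseteq M^e$ and a definable section following the blow-up (Proposition~\ref{prop:property_a} applies since the fibres of $C\to D$ are discrete), one checks that the relevant coordinate projection $q'$ of $q$ lies on the frontier of a nonempty \emph{open} subset $D_0\subseteq M^e$ and that $h$ pulls back to a definable continuous $\tilde h\colon D_0\to M$ that is unbounded above on every neighbourhood of $q'$ in $D_0$. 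When $e=1$, $D_0$ is a finite union of open intervals, $q'$ is an endpoint of one of them, that interval is bounded because $D_0\subseteq\Omega_j$ is bounded, and $\tilde h$ restricted to it (replaced by its negative if necessary) is the required $\gamma$; for $e>1$ one reduces to $e-1$ by restricting $D_0$ to a coordinate slice through a point near $q'$ along which the blow-up of $\tilde h$ persists, choosing that slice with the help of definable compactness once more. The principal obstacle throughout is precisely to guarantee that the blow-up of $h$ at $q$ survives being passed to a quasi-special cell, to a section of it, and then to lower-dimensional slices; the case in which $h(\Omega_j)$ is discrete — so $h$ is locally constant, with relatively clopen definable level sets $h^{-1}(c)$, and the suprema one would like to form are all infinite — requires a separate and more delicate analysis of how the sets $h^{-1}(c)$ with $c\to+\infty$ accumulate at $q$, and I expect it to be the hardest part of the argument.
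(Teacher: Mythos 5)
Your first half is sound and runs parallel to the paper: using definable compactness of $X$ on the filtered collection of closures of $g^{-1}([r,+\infty))$ to produce a limit point $p$ in a second chart, and observing that the transition function $h=\rho\circ\psi^{-1}$ must blow up at the frontier point $q=\varphi_j(p)$ of $\Omega_j$, is essentially the paper's step of intersecting the sets $\mycl(\varphi_i^{-1}(\tau([r,\infty))))$. The genuine gap is exactly where you locate ``the heart of the proof'': extracting from $h$, unbounded on every neighbourhood of $q$ in $\Omega_j$, a definable continuous one-variable function on a bounded interval tending to $+\infty$. This is a curve-selection statement at a boundary point, and nothing in the paper's toolkit supplies it. Your induction on $\dim\Omega_j$ through quasi-special submanifolds, sections and slices is only sketched, you yourself concede that you cannot control whether the blow-up of $h$ survives the passage to a cell, to a section, and to lower-dimensional slices, and the discrete-image case you isolate is not a side issue but an outright obstruction: if $h(\Omega_j)$ is discrete then $h$ is locally constant, so no continuous definable curve inside $\Omega_j$ can carry $h$ to $+\infty$, and the argument would have to change shape entirely. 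As written, the proof does not close.

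The paper sidesteps this by never selecting a curve near the limit point: the one-dimensional parameter is the unbounded coordinate itself. It first proves (via definable compactness and the closedness of zero-dimensional definable sets) that $U'_i$ has no unbounded definable subset of dimension zero, hence that every unbounded definable subset of $\pi_j(U'_i)$ bounded from below contains a ray $[R,\infty)$. It then takes a definable section $\tau$ of $\pi_j$ over such a ray, uses definable compactness a second time to arrange that $\varphi_i^{-1}\circ\tau$ eventually lies in a bounded chart neighbourhood $V'\subseteq U'_k$, and applies strong monotonicity (Theorem \ref{thm:monotonicity}) to the coordinates of the resulting injective map $\eta\colon[R_3,\infty)\to V'$; some coordinate of $\eta$ is strictly monotone on a ray, giving a definable homeomorphism from an unbounded interval onto a bounded one. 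If you want to salvage your plan, reverse the direction: parametrize by the coordinate that goes to infinity rather than by a curve approaching $q$.
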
 
\begin{proof}
We construct a definable homeomorphism between a bounded interval and an  unbounded interval assuming that $U'_i$ is not bounded for some $i \in I$.
Fix such an $i$.
Let $M^m$ be the ambient space of $U'_i$.

We first prove the following claim:
\medskip

{\textbf{Claim 1.}} There are no unbounded definable subsets of $U'_i$ of dimension zero.
\medskip

We prove the claim.
We lead to a contradiction that there exists an unbounded definable subset $D$ of $U'_i$ of dimension zero.
The notation $B(0;r)$ denotes the open box $(-r,r)^m$ for $r>0$.
Consider the definable filtered collection $\{\mycl(\varphi_i^{-1}(D \setminus B(0;r)))\}_{r>0}$ of closed nonempty subsets of $X$.
Since $X$ is definably compact, we can take $z \in \bigcap_{r>0} \mycl(\varphi_i^{-1}(D \setminus B(0;r)))$.
Take $k \in I$ so that $z \in U_k$.
Set $Z_r=\varphi_k(\varphi_i^{-1}(D \setminus B(0;r)) \cap U_k)$ for all $r>0$.
The point $\varphi_k(z)$ is not contained in $Z_r$ for a sufficiently large $r>0$, but it is contained in the closure of $Z_r$ by Proposition \ref{prop:closure}.
It is a contradiction to the fact that a definable set of dimension zero is closed.
We have demonstrated the claim.
\medskip

We can take $1 \leq j \leq m$ so that the image $\pi_j(U'_i)$ is not bounded.
Here, the notation $\pi_j:M^m \rightarrow M$ denotes the projection onto the $j$-th coordinate.
We assume that $\pi_j(U'_i)$ is not bounded from above. 
The proof for the case in which $\pi_j(U'_i)$ is not bounded from below is similar.
Under this circumstance, the following claim holds true:

\medskip

{\textbf{Claim 2.}} Let $C$ be an unbounded definable subset of $\pi_j(U'_i)$ which is bounded from below.
There exists an $R \in M$ such that the interval $[R, \infty)$ is contained in $C$.
\medskip

We prove the claim.
Let $D$ be the set of the discrete points in $C$.
Set 
\begin{align*}
E&=\{(a,b) \in M^2\;|\; a<b \text{ and  the open interval }(a,b) \text{ is a maximal interval}\\
&\qquad \text{ contained in }C\}\text{.}
\end{align*}  
We finally set $F=D \cup \{(a+b)/2\;|\; (a,b) \in E\}$.
The definable set $F$ is an unbounded definable subset of $\pi_j(U')$ of dimension zero.
It is a contradiction to Claim 1.
We have demonstrated the claim.
\medskip

We take a definable map $\tau:\pi_j(U'_i) \rightarrow U'_i$ by the definable choice lemma.
Set 
\begin{align*}
C_1&=\{x \in M\;|\; x>0, \tau \text{ is continuous at }x\}\text{.}
\end{align*}
Since the map $\tau$ is continuous at any discrete point in $\pi_j(U'_i)$, the set $C_1$ is an unbounded definable subset of $\pi_j(U'_i)$ by Theorem \ref{thm:monotonicity}.
The definable set $C_1$ contains an interval of the form $[R_1,\infty)$ by Claim 2.

Set $Y_r=\mycl(\varphi_i^{-1}(\tau([r,\infty))))$.
Since it is a definable filtered collection of closed nonempty subsets and $X$ is definably compact, we can take a point $z \in \bigcap_{r  \geq R} Y_r$.
We can take $k \in I$ so that $z \in U_k$.
Take a bounded open definable neighborhood $V'$ of $\varphi_k(z)$ in $U_k'$ and set $V=\varphi_k^{-1}(V')$.
Set 
\begin{align*}
C_2&=\{x \in C_1\;|\; \varphi_i^{-1}(\tau(x)) \in V\}\text{.}
\end{align*}
It is an unbounded definable set.
There exists $R_2 \in M$ such that the interval $[R_2, \infty)$ is contained in $C_2$ by Claim 2.

Let $M^n$ be the ambient space of $U'_k$ and consider the definable map $\eta:[R_2, \infty) \rightarrow U'_k$ given by $\eta(x)=\varphi_k(\varphi_i^{-1}(\tau(x)))$.
Let $p_l:M^n \rightarrow M$ be the coordinate projection onto the $l$-th coordinate.
Set 
\begin{align*}
C_3&=\{x \in [R_2,\infty))\;|\; p_l \circ \eta \text{ is continuous and locally monotone at }x\\
&\quad \text{ for all }1 \leq l \leq n\}\text{.}
\end{align*}
Since the difference $[R_2,\infty) \setminus C_3$ is discrete by the strong local monotonicity, the definable set $C_3$ is unbounded. 
The definable set $C_3$ contains an interval of the form $[R_3,\infty)$ by Claim 2.
The restriction $p_l \circ \eta|_{(R_3,\infty)}$ of $p_l \circ \eta$ to $(R_3,\infty)$ is monotone by \cite[Proposition 3.1]{Fuji} for all $1 \leq k \leq n$.
Since $\eta$ is injective, the composition $p_l \circ \eta|_{(R_3,\infty)}$ is strictly monotone for some $1 \leq l \leq n$.
It is a definable homeomorphism between a bounded open interval and an unbounded open interval.
\end{proof}

\begin{remark}\label{rem:bounded_coord}
Consider an definably complete locally o-minimal expansion of an ordered group $\mathcal M=(M;<,0,+,\ldots)$.
Let $X$ be a definably compact pseudo-definable space.
By Lemma \ref{lem:limitset3}, there always exists a definable atlas $\{\varphi_i:U_i \rightarrow U'_i\}_{i \in I}$ of $X$ such that $U'_i$ are bounded for all $i \in I$.
\end{remark}

\subsection{Imbedding of psudo-definable spaces}
\begin{definition}
Let $\mathcal M=(M; <, \ldots)$ be a definably complete expansion of a dense linear order without endpoints.
Let $X$ be a definable Hausdorff space and $(a, b)$ an interval.
We call a definable map $\gamma:(a, b) \to X$  a definable curve in $X$.
For $x \in X$, we write $\gamma \to x$ if $\lim_{t \to b}\gamma(t)=x$.
A definable curve $\gamma \to x \in Y$ with definable $Y \subseteq X$ such that $\gamma (a, b) \subseteq Y$,
we say that $\gamma$ is completable in $Y$.
\end{definition}

We can prove the following lemma similarly to \cite[Chapter 6, Lemma 4.2]{vdD}.

\begin{lemma}\label{lem:kawa_lem1}
Consider a definably complete locally o-minimal expansion of an ordered group $\mathcal M=(M;<,0,+,\ldots)$.
Let $X$ be a definable Hausdorff space, $f:X \to M^k$ be a definable map and $x \in X$.
Then $f$ is continuous at the point $x$ if and only if
for each definable curve $\gamma \subseteq X$ with $\gamma \to x$, we obtain $f \circ \gamma  \to f(x)$.
\end{lemma}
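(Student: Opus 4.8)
The plan is to handle the two implications separately; the forward implication is immediate and the reverse one I would prove by contraposition. If $f$ is continuous at $x$ and $\gamma:(a,b)\to X$ is a definable curve with $\gamma\to x$, then $f\circ\gamma$ is a composition of maps continuous at the relevant points, so $\lim_{t\to b}(f\circ\gamma)(t)=f\bigl(\lim_{t\to b}\gamma(t)\bigr)=f(x)$, i.e. $f\circ\gamma\to f(x)$; this uses nothing beyond the definitions. It therefore remains to show that if $f$ is discontinuous at $x$, then some definable curve $\gamma\to x$ satisfies $f\circ\gamma\not\to f(x)$.

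First I would reduce to the affine case. Choose a member $\varphi_i:U_i\to U_i'$ of a definable atlas of $X$ with $x\in U_i$. Since continuity at a point is a local property and $\varphi_i$ is a definable homeomorphism onto the definable set $U_i'\subseteq M^{m_i}$, $f$ is continuous at $x$ if and only if $f\circ\varphi_i^{-1}$ is continuous at $\varphi_i(x)$; and since $U_i$ is an open neighbourhood of $x$, every definable curve $\gamma\to x$ eventually takes values in $U_i$, whence $\varphi_i\circ\gamma$ is a definable curve in $U_i'$ converging to $\varphi_i(x)$ with $f\circ\gamma=(f\circ\varphi_i^{-1})\circ(\varphi_i\circ\gamma)$, and conversely every definable curve in $U_i'$ to $\varphi_i(x)$ pulls back along $\varphi_i^{-1}$. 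Hence it suffices to prove the lemma with $X$ replaced by $U_i'$ and $x$ by $\varphi_i(x)$, so we may assume $X$ is a definable subset of $M^m$ and $x\in M^m$. Now discontinuity of $f$ at $x$ furnishes a positive $\varepsilon\in M$ such that the definable set $A:=\{y\in X\mid |f(y)-f(x)|\ge\varepsilon\}$ meets every neighbourhood of $x$; since $|f(x)-f(x)|=0<\varepsilon$ we have $x\in\overline A\setminus A$.

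Consider the definable function $r:A\to M_{>0}$ given by $r(y)=|y-x|$, which is positive because $y\ne x$ for $y\in A$. As $x\in\overline A$, every open box about $x$ meets $A$, so $\inf r(A)=0$ while $0\notin r(A)$. Applying local o-minimality to the definable subset $r(A)$ of $M$ at the point $0$ and using $r(A)\subseteq M_{>0}$, the set $r(A)$ agrees on some interval $(0,\delta_0)$ with a finite union of points and open intervals whose infimum is $0$ and is not attained; hence one of its interval components has left endpoint $0$, that is, $(0,\delta)\subseteq r(A)$ for some positive $\delta\in M$ (alternatively one may invoke the dichotomy of \cite[Lemma 2.3]{Fuji4}). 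Consequently the bounded definable set $\{(s,y)\in(0,\delta)\times A\mid |y-x|=s\}$ projects onto $(0,\delta)$, and the definable choice lemma \cite[Lemma 3.1]{Fuji5} yields a definable map $\sigma:(0,\delta)\to A$ with $|\sigma(s)-x|=s$ for all $s$. Then $\sigma(s)\to x$ as $s\to 0^{+}$, so setting $\gamma:(-\delta,0)\to X$, $\gamma(t)=\sigma(-t)$, gives a definable curve with $\gamma\to x$; since $\gamma(t)\in A$ we have $|f(\gamma(t))-f(x)|\ge\varepsilon$ for every $t$, whence $f\circ\gamma\not\to f(x)$, completing the proof.

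The routine parts are the passage to a chart and the final reparametrisation. I expect the only point requiring care to be the extraction of an interval $(0,\delta)\subseteq r(A)$ abutting $0$: this is exactly where local o-minimality, together with the fact that $x\notin A$, is used, and one must note that the infimum $0$, being unattained, forces an interval component rather than a point component at the left end. The rest is a direct appeal to the definable choice lemma, so no substantial obstacle arises.
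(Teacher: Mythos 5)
Your proof is correct and follows essentially the same route as the paper: reduce to a chart so that $X$ may be taken affine, observe that the set of distances $|y-x|$ realized by points where $f$ deviates from $f(x)$ by at least $\varepsilon$ contains an interval $(0,\delta)$ by local o-minimality, and use definable choice to produce a curve parametrized by distance that converges to $x$ while $f$ stays $\varepsilon$-far from $f(x)$. Your version merely spells out more explicitly the step ensuring $(0,\delta)\subseteq r(A)$, which the paper asserts without elaboration.
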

\begin{proof}
We may assume that the definable curve $\gamma$ is continuous by Proposition \ref{prop:dim}(7).
If $f$ is continuous, then the condition is clear.

We prove the converse.
Since $\mathcal M$ is an expansion of ordered group, 
the space $M^k$ has a definable metric $d$.
Assume $f$ is discontinuous at $p$.
Then, for any $\epsilon>0$ and any neighborhood $V$ of $f(p)$,
we have 
$$\{x \in X\;|\; d(f(x), f(p)) \geq \epsilon\} \cap V \neq \emptyset\text{.}$$
Since $X$ is a psudo-definable space,
we can take an open set $U \ni p$ and  a homeomorphism $g:U \to W\subseteq M^m$ such that the set $\{y \in W \;|\; d(f(g^{-1}(y)), f(p)) \geq \epsilon\}$ is a definable set and 
it intersects every neighborhood of $g(p)$.
It implies that the definable set $\{d(y, g(p))\;|\;y \in W, d(f(g^{-1}(y)), f(p)) \geq \epsilon\}$ contains an open interval of the form $(0,\delta)$.
By the definable choice, there exists a definable curve $\gamma:(0, \delta) \to W$ such that $d(\gamma (t), p)=t$ and $f(\gamma (t)), f(p)) \geq \epsilon$ for all $0 <t<\delta$.
Consider the pullback $\gamma':(0, \delta) \to X$ given by $\gamma'(t)=g^{-1}\circ \gamma (t)$.
We get $\gamma' \to p$, but do not obtain $f(\gamma') \to f(p)$.
\end{proof}

We get the following theorem with a proof similar to \cite[Chapter 10, Theorem 1.8]{vdD} using Remark \ref{rem:bounded_coord}.
\begin{theorem}\label{thm:compact_immbedding}
Consider an definably complete locally o-minimal expansion of an ordered group $\mathcal M=(M;<,0,+,\ldots)$ having definable bounded multiplication $\cdot$ compatible to $+$.
Every regular definably compact pseudo-definable space is definably imbeddable into some $M^n$.
\end{theorem}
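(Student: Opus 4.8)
The plan is to follow van den Dries's proof of \cite[Chapter 10, Theorem 1.8]{vdD}; the two places where the present weaker hypotheses must be accommodated are that, by Remark \ref{rem:bounded_coord}, the coordinate charts may be taken to have bounded images, and that products and quotients of bounded definable functions stay definable by Lemma \ref{lem:basic} — this is exactly where definable bounded multiplication replaces the total multiplication available in the o-minimal field case. Fix a definable atlas $\{\varphi_i:U_i\to U_i'\}_{i\in I}$ of the regular definably compact pseudo-definable space $S$ with $I=\{1,\dots,k\}$ and each $U_i'$ a bounded definable subset of $M^{m_i}$. The goal is to construct one definable, continuous, injective map $\Phi\colon S\to M^N$ with $N=k+\sum_{i}m_i$, and then to verify that $\Phi$ is a homeomorphism onto its (definable) image.

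Using regularity together with definable compactness I would first produce a definable shrinking of the cover: definable open sets $W_i\subseteq V_i\subseteq U_i$ with $\overline{W_i}\subseteq V_i$ and $\overline{V_i}\subseteq U_i$ (closures in $S$) such that $\{W_i\}_{i\in I}$ still covers $S$. This is the standard finite shrinking argument applied one chart at a time, separating the closed set $S\setminus\bigcup_{j\neq i}U_j$ from $S\setminus U_i$. Next I would build definable continuous functions $f_i\colon S\to[0,1]$ with $f_i\equiv 1$ on $\overline{W_i}$ and $f_i\equiv 0$ on $S\setminus\overline{V_i}$. Inside the $i$-th chart this is obtained by pulling back along $\varphi_i$ the function $z\mapsto d_A(z)/(d_A(z)+d_B(z))$ on $M^{m_i}$, where $A=M^{m_i}\setminus\varphi_i(V_i)$ and $B=\varphi_i(\overline{W_i})$; here $A,B$ are closed, $A\cap B=\emptyset$, and because $U_i'$ is bounded both distance functions are bounded on $U_i'$, so the quotient is definable by Lemma \ref{lem:basic}. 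Gluing this pullback with the zero function on the open set $S\setminus\overline{V_i}$ — they agree on the overlap — yields $f_i$; it is definable on $S$ because $\overline{V_i}$ is a definable subset of $S$ by Proposition \ref{prop:closure} and the restriction of $f_i$ to every chart is definable.

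With the $f_i$ in hand, set $g_i\colon S\to M^{m_i}$ to be the coordinatewise product $f_i\cdot\varphi_i$ on $U_i$ and $0$ on $S\setminus\overline{V_i}$; this is continuous since $f_i$ vanishes on the open set $S\setminus\overline{V_i}\supseteq S\setminus U_i$, and definable because $f_i$ and the components of $\varphi_i$ are bounded (Lemma \ref{lem:basic}). Put $\Phi=(f_1,\dots,f_k,g_1,\dots,g_k)$; it is definable, continuous, and has bounded image in $M^N$. For injectivity, if $\Phi(x)=\Phi(y)$, pick $i$ with $f_i(x)>0$ (possible as $\{W_i\}$ covers and $f_i\equiv 1$ on $\overline{W_i}$); then $f_i(y)=f_i(x)>0$ forces $x,y\in U_i$, and $g_i(x)=g_i(y)$ with $f_i(x)=f_i(y)\neq 0$ give $\varphi_i(x)=\varphi_i(y)$, whence $x=y$. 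To see that $\Phi$ is a homeomorphism onto its image it suffices to show $\Phi$ is a closed map: if $Z$ is a definable closed subset of $S$, then $Z$ is definably compact (a definable filtered collection of nonempty closed subsets of $Z$ is one of $S$), so $\Phi(Z)$ is definably compact by Remark \ref{rem:def_compact}, hence closed and bounded in $M^N$, hence closed in $\Phi(S)$. A continuous closed bijection is a homeomorphism, and the inverse of a definable bijection is definable, so $\Phi$ is a definable imbedding of $S$ into $M^N$.

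The main obstacle I anticipate is the second step: establishing the definable shrinking lemma and the Urysohn-type functions $f_i$ for pseudo-definable spaces, i.e.\ checking that the open sets provided by regularity to separate a definably compact set from a disjoint closed set can be chosen definably and patched consistently over the finitely many charts. Once that is settled, the remainder is a direct transcription of the classical argument, with Remark \ref{rem:bounded_coord} and Lemma \ref{lem:basic} supplying the definability of the products $f_i\cdot\varphi_i$ and of the quotients defining $f_i$ that come for free over an ordered field. Alternatively, the homeomorphism step could be carried out by verifying the continuity of $\Phi^{-1}$ through the definable curve criterion of Lemma \ref{lem:kawa_lem1}, but the closed-map argument above is shorter.
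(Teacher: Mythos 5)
Your overall architecture (bounded charts via Remark \ref{rem:bounded_coord}, an embedding built from a definable partition-of-unity-type family $f_i$ and the products $f_i\cdot\varphi_i$, injectivity from the chart where $f_i>0$, and the closed-map argument via definable compactness and Remark \ref{rem:def_compact}) is a genuinely different route from the paper, which instead transcribes van den Dries's two-chart gluing: it introduces the sets $B_i'$ of ``boundary limit points seen from the other chart,'' proves $d(x,B_i')>0$ on $V_i$ using regularity only once and only topologically (the separating opens need not be definable, since they are used solely to derive a contradiction), and then checks continuity and closedness of the resulting map by the curve criterion of Lemma \ref{lem:kawa_lem1}. Your endgame is actually cleaner than the paper's Claim~3. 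But the middle of your argument has a genuine gap, and it sits exactly at the point you flag as ``the main obstacle.''

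Concretely: (i) the definable shrinking lemma is not available as stated. The classical passage from regularity to normality (and hence to a shrinking $W_i\subseteq\overline{W_i}\subseteq V_i\subseteq\overline{V_i}\subseteq U_i$) covers a closed set by the opens furnished by regularity at each of its points and extracts a \emph{finite subcover}; definable compactness, defined via definable filtered collections of closed sets, does not give finite subcovers of arbitrary (or even definable) open covers, and on top of that you need the separating open sets to be \emph{definable}, which regularity alone never provides. (ii) Even granting the shrinking, your Urysohn function is built from $d_A/(d_A+d_B)$ with $A=M^{m_i}\setminus\varphi_i(V_i)$; but $\varphi_i(V_i)$ is open only in $U_i'$, not in $M^{m_i}$, so $A$ need not be closed and $d_A$ can vanish at points of $\varphi_i(V_i)$ lying in the frontier of its complement. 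Repairing this forces you to show that $\varphi_i(\overline{W_i})$ keeps a positive distance from $\mycl\bigl(U_i'\setminus\varphi_i(V_i)\bigr)$, which is precisely a statement of the same nature as the paper's Claim~1 about $d(x,B_i')>0$ and requires its own argument combining regularity with definable compactness. So the proposal does not close the theorem as written; the hard content has been deferred to the step you acknowledge is unproven, and that step is where the paper's proof does all of its real work.
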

\begin{proof}
Let $X$ be a regular definably compact pseudo-definable space with definable atlas $\{h_i:U_i \to V_i \subset M^{n_i}\}_{i=1}^k$.
We may assume that $V_i$ are always bounded by Remark \ref{rem:bounded_coord}. 
We proceed by induction $k$.
The case where $k=1$ is trivial.
We may assume that $k=2$.
If we can  prove this case, the general case is proved similarly.

We define the definable sets as follows:
\begin{align*}
&V_1=h_1(U_1 \cap U_2),\ \  V_2=h_2(U_1 \cap U_2),\ \ B_1=h_1(\partial U_2),\ \ 
B_2=h_2(\partial U_1),\\
&B'_i=\{x \in M^{n_i}\;|\;\exists y \in B_{3-i}, \forall \epsilon_1, \epsilon_2>0, \exists z \in U_1 \cap U_2,\ \\
&\qquad \qquad [d(x, h_1(z))<\epsilon_1 \text{ and } d(y, h_2(z))<\epsilon_2]\} \ \ (i=1,2).\\
\end{align*}
\medskip

\textbf{Claim 1.}\ $d(x, B_i')>0$ for each $x \in V_i$,  $i=1, 2$.
\medskip

We show the claim the case in which $i=1$ and the case in which $i=2$ is symmetry.
Assume that $d(x, B'_1)=0$ for some $x \in V_1$.
Note that $h_1^{-1}(x) \in U_1$ and $U_1 \cap h_2^{-1}(B_2)=\emptyset$.
Then, we get $U_1 \cap \mycl( h_2^{-1}(B_2))=\emptyset$.
Since $X$ is regular,
there exist disjoint open neighborhoods $D$ of $h_1^{-1}(x) \in U_1$ and 
$E$ of $h_2(B_2)$ in $U_2$.
For a contradiction, we find an element in $D \cap E$.

Since $h_1(D)$ is open in $V_1$,
we can take an $\epsilon >0$ such that $B(x, \epsilon) \cap V_1 \subset h_1(D)$.
Because $d(x, B'_1)=0$, there exists $x' \in B_1'$ with $d(x, x')<\epsilon$.
By the definition of $B_1'$, there exists $y \in B_2$ for which there are points $z \in U_1 \cap U_2$
with $h_1(z)$ close to $x'$ and $h_2(z)$ close to $y$.
Since $h_2(E)$ is open neighborhood of $y$ in $V_2$,
there exists $z \in U_1 \cap U_2$ with $d(x, h_1(z))<\epsilon, h_2(z) \in h_2(E)$.
Thus, $z \in D \cap E$.
This proves the claim.
\medskip

Set $d_i(z)=\min\{d(h_i(z), B_i'),1\}$ for $z \in X$ and $i=1,2$.
We define the map $h:X \to M^{n_1+n_2+2}$ by
\begin{align*}
h(z)=\left\{
\begin{array}{ll}
(d_1(z), d_1(z) \cdot h_1(z), 0, \dots, 0) & \text{ for }z \in U_1 \setminus U_2,\\
(d_1(z), d_1(z)\cdot h_1(z), d_2(z), d_2(z) \cdot h_2(z)) & \text{ for } z \in U_1 \cap U_2,\\
(0, \dots, 0, d_2(z), d_1(z)\cdot h_2(z)) & \text{ for }z \in U_2 \setminus U_1.
\end{array}
\right.
\end{align*}
We assumed that $\mathcal M$ has definable bounded multiplication.
Since both $d_i$ and $h_i$ are bounded for $i=1,2$, the map $h$ is definable.

By Claim 1, we get the following for each $z \in X$.
\begin{itemize}
\item $h(z) \in U_1 \setminus U_2$ if and only if the last $(1+n_2)$ coordinates of $h(z)$ are equal to $0$,
\item $h(z) \in U_2 \setminus U_1$ if and only if the first  $(1+n_1)$ coordinates of $h(z)$ are equal to $0$ 
and 
\item $h$ is injective.
\end{itemize}
\medskip

\textbf{Claim 2.}\ The map $h$ is continuous.
\medskip

We check the criterion given in Lemma \ref{lem:kawa_lem1}.
Let $\gamma$ be a curve in $X$ with $\gamma \to z \in X$.
By restricting the domain of $\gamma$,
we may assume that $\gamma$ lies in either 
completely in $U_1 \setminus U_2$, completely in $U_1 \cap U_2$, or completely 
in $U_2 \setminus U_1$.

When $\gamma$ lies in completely in $U_1 \setminus U_2$, we also have $z \in U_1 \setminus U_2$,
since $z \in U_2$ implies that $\gamma$ lies  at least partly in $U_2$.
The definition of $h$ shows that $h(\gamma) \to h(z)$.
We can prove the claim similarly when $\gamma$ lies completely in $U_2 \setminus U_1$.

Consider the case in which $\gamma$ lies completely in $U_1 \cap U_2$.
If we also have $z \in U_1 \cap U_2$, the definition of $h$ implies that $h(\gamma) \to h(z)$.
Let $z \not\in U_1 \cap U_2$, say  $z \not\in U_1$.
Then $z \in \partial U_1$, $y:=h_2(z) \in B_2$.
Take $x \in M^n$ such that $h_1(\gamma) \to x$.
Since $h_2(\gamma) \to y \in B_2$,
it follows that $x \in B_1'$.
Hence, we get $h(\gamma) \to h(z)$.
\medskip

\textbf{Claim 3.}\ The map $h$ maps $X$ homeomorphically onto $h(X)$.
\medskip

Let $K$ be a definable closed subset of $X$.
Since $h$ is injective and Claim 2, 
it suffices to prove that $h(K)$ is closed in $h(X)$.

Let $z \in X$ with $h(z) \in \mycl(h(K)) \cap h(X)$.
It suffices to show $z \in K$.
Since $h(z) \in \mycl(h(K))$,
there exists a definable curve $\gamma$ in $K$ such that $h(\gamma) \to h(z)$.
We may assume that either $\gamma$ lies completely in $U_1 \setminus U_2$,
$\gamma$ lies completely in $U_1 \cap U_2$,
or  $\gamma$ lies completely in $U_2 \setminus U_1$.
We can also assume that the domain of $\gamma$ is $(0, \epsilon)$.

We first consider the case in which $\gamma$ lies completely in $U_1 \setminus U_2$.
Since $h(\gamma (t))=(d_1(\gamma (t)), d_1(\gamma (t))h_1(\gamma (t)), 0, \dots, 0) \to h(z)$
as $t \to \epsilon$,
the last $(1+n_2)$ coordinates of $h(z)$ are $0$, so that $z \in U_1 \setminus U_2$ and
$h(z)=(d_1(z), d_1(z)h_1(z), 0, \dots, 0)$.
Using Claim 1, we get $h_1(\gamma) \to h_1(z)$.
Hence, we have $\gamma \to z$, so $z \in K$ as $K$ is closed in $X$.
The proof is similar when $\gamma$ lies completely in $U_2 \setminus U_1$.

The remaining case is the case in which $\gamma$ lies completely in $U_1 \cap U_2$.
We have  $h(\gamma (t)) \to h(z)$ as $t \to \epsilon$,
where $$h(\gamma (t))=(d_1(\gamma (t)), d_1(\gamma (t)) \cdot h_1(\gamma (t)), d_2(\gamma (t)), d_2(\gamma (t)) \cdot h_2(\gamma (t)))\text{.}$$
If $z \in U_1$, then we get $h(z)=(d_1(z), d_1(z)h_1(z), \dots)$.
By Claim 1, we have $h_1(\gamma (t)) \to h_1(z)$ and $\gamma \to z$.
Hence, we have $z \in K$.
If $z \in U_2$, the similar argument proves it.
\end{proof}

\section*{Acknowledgment}
MF would like to express his gratitude to the department of mathematics, Hiroshima university for its provision of references.
WK appreciates Akito Tsuboi and Kota Takeuchi. The discussions with them inspired him in developing Section \ref{sec:property_a}.

\end{document}